\documentclass[ejsv2,preprint,floatsintext,noshowframe]{imsart}

\usepackage{amsfonts}
\usepackage{amsmath,bm}
\usepackage{mathrsfs,nicefrac,graphicx}
\usepackage{amsthm,nicefrac,bigints}
\usepackage[utf8]{inputenc}
\usepackage[T1]{fontenc}
\usepackage{epsfig}
\usepackage{bigints}
\usepackage{multirow}
\usepackage{eurosym}
\usepackage{enumerate}
\usepackage{enumitem}
\usepackage{bbm,bm}
\usepackage{xcolor}

\usepackage{array}
\newcolumntype{L}[1]{>{\raggedright\let\newline\\\arraybackslash\hspace{0pt}}m{#1}}
\newcolumntype{C}[1]{>{\centering\let\newline\\\arraybackslash\hspace{0pt}}m{#1}}
\newcolumntype{R}[1]{>{\raggedleft\let\newline\\\arraybackslash\hspace{0pt}}m{#1}}

\usepackage{float}
\usepackage{url}
\usepackage{hyperref}
\usepackage{marginnote}

\newtheorem{theorem}{Theorem} 
\newtheorem{proposition}[theorem]{Proposition}
\newtheorem{lemma}[theorem]{Lemma}
\newtheorem*{theorem*}{Theorem}
\newtheorem{corollary}[theorem]{Corollary}
\theoremstyle{definition}\newtheorem{remark}[theorem]{Remark}
\theoremstyle{definition}\newtheorem{example}[theorem]{Example}
\theoremstyle{definition}

\newcommand{\bE}{E}

\newcommand{\bN}{\mathbb{N}}
\newcommand{\bP}{\mathbb{P}}

\newcommand{\bR}{\mathbb{R}}

\newcommand{\bZ}{\mathbb{Z}}
\newcommand{\bH}{\mathbb{H}}

\newcommand{\bS}{\mathbb{S}}

\newcommand{\cB}{\mathcal{B}}
\newcommand{\cC}{\mathcal{C}}

\newcommand{\cF}{\mathcal{F}}

\newcommand{\cL}{\mathcal{L}}

\newcommand{\cP}{\mathcal{P}}

\newcommand{\cS}{\mathcal{S}}

\newcommand{\unif}{\text{\rm unif}}
\newcommand{\inv}{\text{inv}}
\newcommand{\rev}{\text{\rm rev}}

\newcommand{\cov}{\text{\rm cov}}

\newcommand{\todistr}{\to_{\text{\rm\tiny d}}}

\newcommand{\toweak}{\to_{\text{\rm\tiny w}}}
\newcommand{\eqdistr}{=_{\text{\rm\tiny d}}}

\newcommand{\ind}{_{\text{\rm\tiny ind}}}

\newcommand{\pit}{\scalebox{0.65}{\hspace*{0.5mm}\rm P}}
\newcommand{\gr}{\scalebox{0.65}{\hspace*{0.5mm}\rm G}}

\newcommand{\ba}{\scalebox{0.65}{\hspace*{0.5mm}\rm B}}
\newcommand{\bam}{\scalebox{0.65}{\hspace*{0.5mm}\rm B,-}}
\newcommand{\bap}{\scalebox{0.65}{\hspace*{0.5mm}\rm B,+}}

\newcommand{\brend}{\hfill $\triangleleft$} 

\allowdisplaybreaks

\newcommand{\DE}{D\hspace*{-0.4mm}E}

\newcommand{\nf}{\nicefrac}

\begin{document}

\begin{frontmatter}

\title{Efficiency of pattern-based independence test}
\runtitle{Pattern-based independence tests}

\begin{aug}
\author{\fnms{Ludwig} \snm{Baringhaus}\ead[label=e1]{lbaring@stochastik.uni-hannover.de}}
\and
\author{\fnms{Rudolf} \snm{Gr{\"u}bel}\corref{}
        \ead[label=e2]{rgrubel@stochastik.uni-hannover.de}}
        \address[]{Institute of Actuarial and Financial Mathematics\\
           Leibniz Universit\"at Hannover\\ Welfengarten 1\\
           D-30167 Hannover, Germany \printead{e1,e2}}
\runauthor{L. Baringhaus and R. Gr{\"u}bel}
\end{aug}

\begin{abstract}
Tests of independence are an important tool in applications, 
specifically in connection with the detection of a relationship between variables; they also 
have initiated many developments in statistical theory. In the present paper we build upon
and extend a recently established link to Discrete Mathematics and Theoretical Computer 
Science, exemplified by the appearance of copulas in connection with limits of permutation 
sequences, and by the connection between quasi-randomness and consistency of pattern-based 
tests of independence. The latter include classical procedures, such as  Kendall's tau,
which uses patterns of length two.  Longer patterns lead to tests that are consistent against 
large classes of alternatives, as first shown by Hoeffding (1948) with patterns of length five, 
and by  Yanagimoto (1970) and Bergsma and Dassios (2014) for patterns of length four. 
More recently Chan et al.\ (2020) characterized quasi-randomness for sets of patterns of length four,
which leads to several new consistent pattern-based test for independence. 
We give a detailed and complete description of the respective limiting null distributions. 
In connection with the power performance of the tests, which is of interest for practical purposes,
we provide results on their (local) asymptotic relative efficiencies. We also include a small 
simulation study that supports our theoretical findings. 
\end{abstract}

\begin{keyword}[class=MSC]
\kwd{62G10,62G20}
\end{keyword}

\begin{keyword}
\kwd{Asymptotic distribution}
\kwd{copula}
\kwd{efficiency}
\kwd{Gaussian processes}
\kwd{pattern frequency}
\kwd{permutation}
\kwd{permuton}
\kwd{$U$-statistics}
\end{keyword}

\end{frontmatter}

\date{\today}

\section{Introduction}\label{sec:intro}
Tests of independence are a major topic in applications; 
the chi-squared test and Kendall's tau, for example,  are well-known across the sciences. Such tests 
provide an important tool in connection with detecting dependencies between variables from noisy data. 
Also, throughout their long history, the construction and analysis of such procedures have gone
hand-in-hand with theoretical advances, such as  Hoeffding's decomposition for the
derivation of limiting null distributions, or the use of large deviation principles for the behavior 
under alternatives.  

In recent years an interesting connection to Discrete Mathematics and Theoretical Computer Science has opened up: First, two-dimensional copulas, often used for modelling dependencies of two real random variables, appear  as limits of sequences of permutations, where they are known as \emph{permutons}; 
see~\cite{GrMet} for a simple introduction. Secondly, the
associated notion of convergence refers to pattern frequencies, which are the basis for several 
test statistics, including Kendall's tau. The important statistical notion of consistency, i.e.\ the property that 
non-independence will be discovered in the limit, is then closely related to the concept of
\emph{quasi-randomness}  in these other areas. (Indeed, a quasi-randomness property has been 
found in Statistics about 40 years \emph{avant la lettre}; see \cite[p.\,3]{CDN}.) In the present 
paper we build on and extend this connection:  The class of all sums of sets of patterns of length 
four that imply quasi-randomness has been characterized in~\cite{Chan}, and we investigate 
the associated consistent tests of independence. One of these pattern sums was already 
discovered in~\cite{BergsmaDassios} in connection with tests of independence, and in this case,
the limiting null distribution has been obtained  in~\cite{NWD}.
Here we derive the limiting null distributions for the new tests. In each case 
the test statistic  is a degenerate $U$-statistic of order four and rank two if the variables 
are independent, which implies that the limit distribution is the same as the distribution 
of a weighted sum of independent centered chisquare variables with one degree of freedom.
We discuss in detail the identification of these weights, which are known to be the non-zero 
eigenvalues of an integral operator associated with the (reduced) kernel of the $U$-statistic. 

Given that we now have several consistent tests that are all based on counts of patterns of length 
four, a quantitative comparison is of interest, not least from an applied point of view. Such
a comparison may be based on the behavior of the tests under alternatives. We consider 
various notions of efficiency and show that the local Bahadur and limiting Pitman efficiencies 
lead to the same value for each pair of tests. This value depends on the individual tests 
through the largest eigenvalues and the eigenfunctions of the respective integral operator.
These also determine the locally optimal direction of the alternative for each test; 
see Figure~\ref{fig:dir} below for a graphical representation of a  specific outcome.

We turn to a formal treatment.
Suppose that $z_1,\ldots,z_n$ is a two-dimensional data set, $z_i=(x_i,y_i)$ with 
$x_i,y_i\in \bR$, with no multiple values (ties) in the two coordinate data sets 
$x_1,\ldots,x_n$ and $y_1,\ldots,y_n$. Then the respective rank vectors $q_n,r_n$ 
with components 
\begin{equation}\label{eq:ranks}
  q_{n,i}:=\# \{1\le j\le n:\, x_j\le x_i\},\quad r_{n,i}:=\# \{1\le j\le n:\, y_j\le y_i\},   
\end{equation}
$i\in[n]:=\{1,2,\ldots,n\}$, both define an element of the group $\bS_n$ of permutations 
of $[n]$. By the permutation generated by the data set we mean the composition 
$\pi_n:=r_n\circ q_n^{-1}$. 
In contrast to the ranks, $\pi_n$ is invariant under reordering of $z_1,\ldots,z_n$. From 
a data analytic point of view, the transition from the set $\{(x_i,y_i):\, i\in [n]\}$ to
$\{(q_{n,i},r_{n,i}):\, i\in [n]\}=\{(i,\pi_{n,i}):\, i\in [n]\}$ corresponds to the transition
from the scatter plot to the rank or permutation plot.

Regarding  the data as the values of the first $n$ terms of a sequence $(Z_i)_{i\in\bN}$ of independent 
and identically distributed random variables, we write $Q_n,R_n,\Pi_n$ for the random elements of
$\bS_n$ associated with $Z_1,\ldots,Z_n$. We assume throughout that the distribution functions
$F_X,F_Y$ of the component variables $X_i,Y_i$ are continuous so that the above no-ties 
condition holds with probability~1. Using the invariance under the probability transform  
we may then further assume that the marginal distributions are uniform on the unit interval, 
so that the distribution function $C$ of the $Z$-variables is an element of the set $\cC$ of two-dimensional copulas.
The reduction from data values to ranks then implies that testing independence leads to the
(simple) hypothesis
\begin{equation*}
   H_0:\, C=C\ind, \ \text{ with }C\ind(x,y):=xy \; \text{ for } 0\le x,y\le 1,
\end{equation*} 
where the independence copula $C\ind$ is the distribution function associated with the uniform
distribution on the unit square. 
Under $H_0$ the random permutation $\Pi_n$ is uniformly distributed on $\bS_n$, which 
we abbreviate to $\Pi_n\sim \unif(\bS_n)$. In fact, it follows that $C=C\ind$ if this 
holds for all $n\in\bN$,
which may be interpreted as a characterization of the uniform distribution on the unit square. 

We are specifically interested in tests that use \emph{pattern counts}: Let
$\sigma=(\sigma_1,\ldots,\sigma_k)\in\bS_k$, $\pi=(\pi_1,\ldots,\pi_n)\in\bS_n$ with $k\le n$. 
We say that the pattern $\sigma$ appears in the permutation $\pi$ at
$A=\{m_1,\ldots,m_k\}\subset[n]$ if $\pi_{m_i}<\pi_{m_j}$ and $\sigma_i<\sigma_j$ 
are equivalent for all $i,j\in [k]$ with $i<j$ and $m_i<m_j$. We  abbreviate this to 
$\pi_A=\sigma$ and write
\begin{equation}\label{eq:pattern}
 N(\sigma,\pi) := \#\bigl\{A\subset [n]:\, \#A=k, \pi_A=\sigma\bigr\},\quad 
    t(\sigma,\pi) := {\textstyle\binom{n}{k}}^{-1} N(\sigma,\pi)
\end{equation}
for the absolute and relative number of occurrences of $\sigma\in\bS_k$ in $\pi\in\bS_n$.  
By a \emph{linear pattern statistic of degree $k$}, we mean a linear combination of pattern
counts $L_a(\Pi_n)$, with $L_a(\pi)=\sum_{\sigma\in \bS_k} a_\sigma t(\sigma,\pi)$ and
$a=(a_\sigma)_{\sigma\in\bS_k}\in\bR^{\bS_k}$. We will mainly deal with 0-1  valued
coefficient functions where, for some subset $A$ of $\bS_k$, $L_A(\pi)=\sum_{\sigma\in A} 
t(\sigma,\pi)$. Then the coefficient function is simply the indicator function of the set $A$. 

The classical example of a pattern-based test of independence is Kendall's tau, with a 
test statistic that may be written as a monotone function of the number of concordant pairs
or, equivalently, of $t(\sigma,\Pi_n)$ with $\sigma=(12)$. The question whether the use of longer
patterns leads to tests with better power properties has already been investigated
by~Hoeffding~\cite{Hoeffding}, who showed that with patterns of length five it is possible to 
obtain a test that is consistent against all alternatives $C\in\cC'$, where $\cC'$ denotes 
the set of all copulas with continuous densities, and that this is closely related 
to a characterization of $C\ind$ within $\cC'$. Yanagimoto~\cite{Yana} obtained such a 
characterization of $C\ind$ within $\cC'$ with patterns of length four, where $\cC'$ is 
now the set of all absolutely continuous copulas, and also showed that such a result is 
not possible with patterns of length three. This was later rediscovered by Bergsma and
Dassios~\cite{BergsmaDassios} with yet another $\cC'$, now including discrete distributions. 
Below we will refer to the resulting procedure as the BDY test.
The latter authors also provide a statistical motivation with jointly concordant and 
discordant pairs, connecting the BDY test to Kendall's tau; 
see also~\cite[Section 6.2]{DrtonHanShi}. 

The fact that the uniform distribution on the unit square can be characterized by pattern
probabilities is connected to a property known as \emph{quasirandomness} in discrete mathematics 
and theoretical computer science. Chan et al.\ \cite{Chan} found other (indicator) functions 
on $\bS_4$ that imply quasirandomness, and even obtained an associated characterization.
These can be used to construct further consistent tests of independence. The method of proof 
in~\cite{Chan} is quite different from that in~\cite{Hoeffding,Yana,BergsmaDassios}, it
relies on a tool from extremal combinatorics. This approach leads to a characterization of 
$C\ind$ within the class \emph{all} copulas, no smoothness 
condition is needed beyond continuity of the marginal distribution functions $F_X,F_Y$. 

Our main aim in the present paper is an analysis of the tests that are based on the 
additional characterizations found in~\cite{Chan}. For the BDY test the limit distribution 
under the null hypothesis has been described in~\cite{NWD}, and~\cite{DDB} considers 
the limit distribution under contiguous alternatives. In all cases, the 
respective test statistics can be written as 
$U$-statistics; hence, for an assessment based on the behavior as $n\to\infty$, the asymptotics 
of the latter are important. Indeed, a corresponding law of large number is essential for 
transforming a distributional characterization into a consistent test; 
see~\cite[Section 3]{Hoeffding}. We obtain the limit distributions under the null hypothesis 
for the new tests (Theorem~\ref{thm:asdistr}).

Next we compare the BDY and the new tests through various notions of efficiency:  
Local exact and local approximate Bahadur efficiency, limiting Pitman efficiency, and an efficiency
concept proposed by Gregory \cite{Gregory80} for comparing quadratic tests. 
We obtain explicit results for various copula families that are commonly considered as
suitable alternatives to the independence copula. 
Our analysis in the case of fixed alternatives is based on an extension of a large deviation result for 
degenerate $U$-statistics given in \cite{NikiPoni}. For the behavior of tests under local alternatives 
we use results from \cite{Gregory} on the corresponding distributional limits
of such $U$-statistics.

Quasirandomness leads to tests that are consistent, meaning that any form of dependence 
will be discovered if the sample size is large enough. Classical such `omnibus tests'
are the Cram\'er-von Mises and the Kolmogorov-Smirnov goodness-of-fit tests, where
the behavior under alternatives has been investigated intensively; see \cite{ShorackWellner}
and the references given there. Whereas in the goodness-of-fit situation the distributional limits of 
the  test statistics directly appear as `quadratic forms' in some
Gaussian process, specifically the Brownian bridge, for the `quadratic tests'
in the pattern-based situation we need to construct suitable Gaussian processes 
first. This is achieved via a Kac-Siegert representation
that is based on the spectral decomposition of the Hilbert--Schmidt operator arising in the
analysis of the degenerate $U$-statistics. Once this is in place,
we can follow the treatment of the Cram\'er-von Mises test given by Neuhaus~\cite{Neuhaus}. 
This includes an analysis of the local power of the individual 
tests as a function of the direction of the local alternatives; see e.g. \cite{Janssen},
\cite{MilStr}, and \cite[Section 2]{Neuhaus}.

Below we first collect some basic material in Section~\ref{sec:basics}. Sections~\ref{sec:distr}
and~\ref{sec:eff} contain our main results, on limit distributions and efficiencies.  
In the final section we discuss implementation aspects 
and augment the theoretical results  by a small simulation study.

In the present paper we consider pattern frequencies in connection with independence,
but patterns can also be used for a variety of other testing problems; see~\cite{BaGr}.
 
\section{Notation, basic concepts, and background.}\label{sec:basics}
We need material from different areas and subdivide this section accordingly. Throughout, we write 
$\eqdistr$ and $\todistr$ for equality in distribution and convergence in  distribution of 
random variables, and $\toweak$ denotes weak convergence of probability measures. 
Formally, a random variable $X$ with values in $(E,\cF)$ is defined on some background probability 
space $(\Omega,\cF,\bP)$, and its distribution $P$ is the push-forward of $\bP$ under the
measurable mapping $X$, hence a probability measure on $(E,\cF)$. In the notation we do not distinguish
between $\bP$ and $P$ (or $\bP_\theta$ and $P_\theta$) if the meaning is clear from the context.

\subsection{Permutations.}\label{subsec:perm}
We write $\bS:=\bigcup_{k=0}^\infty \bS_k$ for the set of all finite permutations 
and $|\pi|$ for the length (or degree) of $\pi$, i.e.\ $|\pi|=k$ if $\pi\in\bS_k$. 
In~\cite{Hopp} a pattern-based topology
for (deterministic) sequences $(\pi_n)_{n\in\bN}$ of permutations is introduced 
by requiring that  the sequences $(t(\sigma,\pi_n))_{n\in\bN}$ defined in~\eqref{eq:pattern}
converge for all $\sigma\in\bS$. If $|\pi_n|\to\infty$ then the limits are described
by permutons, or equivalently two-dimensional copulas, in the sense that, for some copula $C$, 
$\lim_{n\to\infty} t(\sigma,\pi_n) = t(\sigma, C)$ for all $\sigma\in\bS$. Here, 
$t(\sigma,C) = P_C(\Pi_k=\sigma)$ if $|\sigma|=k$,
where $\Pi_k$ is the random permutation generated by 
independent random vectors $Z_1,\ldots,Z_k$ with distribution function $C$. 
For each $k\in\bN$, the limiting relative frequencies sum to 1 over $\bS_k$, and it is
easy to see that the resulting probability distributions $\mu_k$ over $\bS_k$ are consistent
(or projective) in the sense that $\mu_k(\sigma)$ is the sum of all $\mu_{k+1}(\tau)$ 
with $\tau_{[k+1]\setminus\{i\}}=\sigma$ for some $i\in[k+1]$.
In fact, $t(\cdot,C)$ may be seen as the projective limit of $\mu_k$ as $k\to\infty$; 
see~\cite{GrMet} for details and the connection to Markov chain boundary theory.
 
On $\bS$ we consider two bijections, `$\inv$' and `$\rev$', given by $\inv(\pi)=\pi^{-1}$ 
and $\rev(\pi)=(\pi_k,\pi_{k-1},\ldots,\pi_1)=\pi\circ (k,k-1,\ldots,1)$ if
$\pi\in\bS_k$. These generate a group $G$ that acts on $\bS$, where $G$ is isomorphic to the 
dihedral group $D_4$. The eight elements of $G$ interact in an equivariant manner with 
pattern counting in the sense that $t(g.\sigma,g.\pi)=t(\sigma,\pi)$ for all $g\in G$ 
and $\sigma,\pi\in\bS$, see also~\cite[p.922]{NWD}. 
The action of $G$ on $\bS$ can be extended to the limits so that $\pi_n\to C$ 
implies $g.\pi_n\to g.C$ for all $g\in G$, where
$g.C$ is the copula associated with $(Y,X)$ if $g=\inv$ and with $(1-X,Y)$ if $g=\rev$.
Clearly, for the independence copula, $g.C\ind=C\ind$ for all $g\in G$, so that the hypothesis
of independence is invariant under $G$. From a statistical point of view we may thus require 
that, at least in a nonparametric setup,  a linear pattern statistic $L_a$ is such that
$L_a(g.\Pi_n)=L_a(\Pi_n)$ for all $n\in\bN$.
If $L_a$ has degree~$k$ then this is equivalent to $a(g.\pi)=a(\pi)$ for all $g\in G$ and
$\sigma\in\bS_k$. In other words, the coefficient function $a$ on $\bS_k$ is constant on the orbits 
induced by the group.

\subsection{Quasirandomness.}\label{subsec:quasi}
We next address the connection to a concept from discrete mathematics.
A (deterministic) sequence $(\pi_n)_{n\in\bN}$ of permutations with $|\pi_n|\to\infty$ 
is said to be \emph{quasirandom} if, for all $k\in\bN$,
\begin{equation}\label{eq:quasi}
    \lim_{n\to\infty} t(\sigma,\pi_n) = \frac{1}{k!}\quad \text{for all }\sigma\in\bS_k.
\end{equation}
Thus, the sequence converges in the pattern frequency topology, and the limit is described
by the independence copula $C\ind$. A random sequence $(\Pi_n)_{n\in\bN}$ generated 
by an i.i.d.\ sequence $(Z_n)_{n\in\bN}$ of random variables with distribution function $C\ind$
has this property with probability~1.
In \eqref{eq:quasi} the degree $k$ of the permutations appears as a parameter and
by projectivity, if~\eqref{eq:quasi} holds for some $k$ then it does so for all $j<k$.
Remarkably, a `forward extension' is valid if the limit is $C\ind$ as in~ \eqref{eq:quasi}:
If it holds for $k=4$ then  it is true for all $k\in\bN$. Hence, 
if all 24 patterns of length four appear with the same limit frequency then the sequence is 
quasirandom. In the probabilistic context,
$t(\sigma,\Pi_n) \to 1/24$ a.s.\ for all $\sigma\in\bS_4$ implies independence of the components
$X_i$ and $Y_i$ of $Z_i$. Moreover, $k=4$ is the smallest possible value with this property.

The main result underlying the BDY  test~\cite[Theorem 1]{BergsmaDassios}
implies that the vector $(t(\sigma,\Pi_n))_{\sigma\in\bS_4}$ may even be reduced to
$L_B(\Pi_n):=\sum_{\sigma\in B} t(\sigma,\Pi_n)$ for a specific set
$B\subset \bS_4$. Four more such subsets were found in~\cite{Chan}.
The five sets are given in Table~\ref{tab:perm4} as $B,C,D,E,F$. 
Note that $B$ and $C$ are both subsets of $F$. It was further shown in~\cite{Chan}
that these, together with their complements, exploit all possibilities. In particular,
in the statistical application,
\begin{equation*}
   L_A(\Pi_n)\to \frac{\# A}{24}\  \text{ a.s.\ as }n\to\infty\quad\Longleftrightarrow
           \quad C=C\ind
\end{equation*}
for such sets $A$, and   
\begin{equation}\label{eq:Chancharac}
  E_C L_A(\Pi_n)\,\ge \, \frac{\# A}{24}\quad\text{for }\,A\in \{B,C,F\},\quad
  E_C L_A(\Pi_n)\,\le \, \frac{\# A}{24}\quad\text{for }\,A\in \{D,E\},  
\end{equation}
where for each specific $A\in \{B,C,D,E,F\}$ equality in holds in \eqref{eq:Chancharac} 
if and only if $C=C\ind$. Motivated by these characterizations we define the test sequences $T^A=(T^A_n)_{n\in\bN}$
with test statistics 
\begin{equation}\label{eq:deftests}
   T^A_n=T^A_n(Z_1,\ldots,Z_n)= \begin{cases}L_A(\Pi_n)-\# A/24, &\text{if } A\in\{B,C,F\},\\
                                                      \# A/24 - L_A(\Pi_n), &\text{if } A\in\{D,E\},
                 \end{cases}
\end{equation}
rejecting the hypothesis if the test statistic exceeds a certain critical value. The $T_n^A$ are
$U$-statistics. A short review of the asymptotic theory of this type of statistics
is given below in Subsection \ref{subsec:Ustat}.

\begin{table}
\begin{tabular}{rllcrllcrllcrll}
\noalign{\hrule}
\noalign{\vspace{2mm}}
    1& \textcolor{black}{1234} & BCF  &&   7 & \textcolor{blue}{2134} & BF && 
  13& \textcolor{green}{3124} & D      && 19 &\textcolor{magenta}{4123} & CF \\
    2& \textcolor{blue}{1243} & BF     &&   8 & \textcolor{red}{2143} & BCF && 
  14& \textcolor{violet}{3142} & DE    && 20 &\textcolor{green}{4132} & E\\
    3& \textcolor{cyan}{1324} & DE        && 9 &  \textcolor{green}{2314} & E && 
  15&  \textcolor{magenta}{3214} & CF && 21 & \textcolor{green}{4213} & D \\
    4&  \textcolor{green}{1342} & D          && 10 & \textcolor{magenta}{2341} &CF  && 
  16&  \textcolor{green}{3241} & E && 22 & \textcolor{cyan}{4231} & DE \\
    5&  \textcolor{green}{1423} &  E          && 11 &  \textcolor{violet}{2413} & DE  && 
  17&  \textcolor{red}{3412} & BCF && 23  & \textcolor{blue}{4312} & BF \\
    6&  \textcolor{magenta}{1432} & CF      && 12  &  \textcolor{green}{2431} & D && 
  18&  \textcolor{blue}{3421} & BF && 24& \textcolor{black}{4321} &BCF \\
\noalign{\vspace{1mm}}
\noalign{\hrule}
\noalign{\vspace{2mm}}
\end{tabular}
\caption{\label{tab:perm4} Elements of $\bS_4$, orbits,  and subset definitions (see text)}
\vspace{-5mm}
\end{table}

Yanagimoto's characterization result~\cite{Yana} is based on a function $L_0$ of the 
two marginal ranks defined in~\eqref{eq:ranks}, with 
$L_0(q,r)=1_H(q)(1_H(r)- 2\cdot 1_J(r) + 2\cdot 1_K(r))- 1/9$ and 
$H,J,K$ the sets of permutations $\pi=(ijkl)\in\bS_4$ characterized by $(i<l)\wedge (j<l)$, 
by $(i<l)\wedge (k<l)$ and by $(i<k)\wedge (j<k)$ respectively. 
This deviates from the statistics used here. 
However, $\Pi_n$ is easily seen to be sufficient for the pair $(Q_n,R_n)$,
and `Rao-Blackwellization' motivates the test statistic
\begin{equation*}
       L(\pi):=\sum_{\{(q,r):\pi=r\circ q^{-1}\}}L_0(q,r),
\end{equation*}
and this leads to a procedure that is equivalent to the BDY test.

For the invariance of the respective tests under the dihedral group
the relation to the orbits is of importance. In
Table~\ref{tab:perm4} the seven orbits are indicated by
different colors. The set $B$ is the union of the black, blue and red sets, hence the BDY test 
has the desired equivariance. The analogous statement holds for the sets $C$ and $F$, but not 
for the remaining $D$ and $E$. Thus, the analogues of the BDY test based on these two sets 
are not invariant under the dihedral group, but both are invariant under the transformation $g=\rev$,
and transformed into each other by $g=\inv$. In contrast to these two tests, the test based on the linear pattern
statistic
\begin{equation}\label{eq:TDE}
  T_n^{\DE}\,:=\,T_n^D\,+\,T_n^E
\end{equation}
is invariant under the dihedral group, and will also be considered below. 

The results in~\cite{Chan} imply that at least eight patterns of length four are needed for 
quasi\-randomness if we allow coefficients 0 and 1 only in the linear pattern statistic $L$.
Very recently, Crudele et al.~\cite{CDN} showed that the following weighted sum of only six 
permutations forces quasirandomness,
\begin{equation*}
     \rho^* := 123 + 321 +  2143 + 3412 + \frac{1}{2}\bigl(2413+3142\bigr).
\end{equation*}
This can be used to obtain yet another consistent (and invariant) test of independence.  
The result is also of interest from a geometric point of view:
The set of linear statistics $L$ with non-negative coefficients that force quasirandomness 
in the sense that 
\begin{equation}\label{eq:char}
 E_C L > E_{C\ind}L\quad  
                \text{ for all }C\not= C\ind
\end{equation}  
is easily seen to be a convex cone (see also~\eqref{eq:TDE}). The projectivity  of the distributions on 
$\bS_k$ with mass functions $\bS_k\ni\sigma\mapsto t(\sigma,C)$ can be 
used to 
show that we have $E_C \rho^*=\frac{1}{4}E_C L_{a^*}$ 
for all $C\in\cC$,  where $a^*\in\bR^{24}$ is given by 
\begin{equation}\label{eq:defastar}
    a^* = (4, 2, 2, 1, 1, 1, 2, 4, 1, 1, 2, 1, 1, 2, 1, 1, 4, 2, 1, 1, 1, 2, 2, 4).
\end{equation}
The cone spanned by the five subsets in Table~\ref{tab:perm4} (with the modification as in~\eqref{eq:deftests})
has dimension five, including $a^*$ increases this  to six.  Moreover we may also subtract the value~1 from 
each of the components of $a^*$ without violating~\eqref{eq:char},
thereby arriving at another set of twelve elements of $\bS_4$ that forces quasirandomness if
we allow coefficients other than 0 or 1.

\subsection{$U$-statistics.}\label{subsec:Ustat}
We give an outline of that part of the theory of $U$-statistics that we will need below;
for details see e.g.\ \cite{DynMan}, \cite{KoroljukBorovskich} or the original paper~\cite{Hoeffding} by Hoeffding.   The functional analytic aspects are treated in~\cite{Conway}, for example.

Let $(Z_i)_{i\in\bN}$ be a sequence of independent and identically distributed random 
variables with distribution $P$ and values in some measurable space $(E,\cF)$.
Let $k\ge 2$ be an integer, and let $h:E^k\to \bR$ be a measurable, $P$-square integrable function
that is symmetric in its variables.
Then,  
\begin{equation}\label{eq:Ustat}
 U_{h,n} \,:=\,U_{h,n}(Z_1,\ldots,Z_n)\,=\, {\textstyle\binom{n}{k}}^{-1} \sum_{1\le i_1<\cdots < i_k\le n}
              h(Z_{i_1},\ldots,Z_{i_k}),\quad n\ge k,
\end{equation}
is the sequence of $U$-statistics with kernel $h$, and $k$ is the order of the kernel (or of the $U$-statistics).  
We may assume that $h_0:=\bE h(Z_1,\ldots,Z_k)=0$, passing from $h$ to $h-h_0$ if necessary. When dealing
with the limit distribution of these $U$-statistics the functions $h_i:E^i\to \bR$, $i=1,2$,
given by 
\begin{equation}\label{eq:h1h2}
 h_1(z_1):= \bE h(z_1,Z_2,\ldots,Z_k),\quad  h_2(z_1,z_2):= \bE  h(z_1,z_2,Z_3,\ldots,Z_k),
\end{equation}
$z_1.z_2\in E$, are important. 
In the present context, the special case with $h_1=0$ $P$-almost surely and 
$\sigma_2^2(h) := \bE h_2(Z_1,Z_2)^2 >0$ is of main interest. Then the $U$-statistics 
$U_{h,n}$ are called degenerate, with degenerate kernel of rank 2. We then
consider the associated $U$-statistics
\begin{equation*}
 U_{h_2,n} \,:=\,U_{h_2,n}(Z_1,\ldots,Z_n)\,=\  {\textstyle\binom{n}{2}}^{-1} \sum_{1\le i<j\le n}
              h_2(Z_{i},Z_{j}),\quad n\ge 2,
\end{equation*}
with degenerate kernel $h_2$ of order 2 and rank 2, and define an operator 
$K:\bH\to \bH$, $\bH:=L^2(E,\cF,P)$, by
\begin{equation}\label{eq:specK}
   K f(x)  :=  \int h_2(x,y)  f(y)\, P(dy),\quad f\in\bH,~x\in E,
\end{equation}
see e.g. \cite[Sec.\,5.5.2]{Serfling}. This is the  Hilbert--Schmidt operator associated with the kernel $h_2$.
It has a finite or countably infinite set $\varphi_j$, $j\ge 1$, of orthonormal eigenfunctions
$\varphi_j\in \bH$, with corresponding non-zero eigenvalues $\lambda_j$, $j\ge 1$, and spectral
representation
\begin{equation}\label{eq:Kspec}
    K f =\sum_{i\ge 1} \lambda_i \langle  f,\varphi_i\rangle \, \varphi_i\quad\text{for all }   f\in\bH.
\end{equation}
We then have convergence in distribution together with a representation of the limit,
\begin{equation}\label{eq:Ustatconv}
  nU_{h_2,n}\todistr U \ \text{ and }\ \cL(U)=\mu(K):=\cL\biggl(\,\sum_{j\ge 1} \lambda_j \bigl(\xi_j^2-1\bigr)\biggr),
\end{equation}
with  independent standard normal random variables $\xi_j$, $j\in \bN$.
Note that the distribution function associated with $\mu(K)$ is continuous and strictly increasing.
Note also that $U_{h_2,n}$ is called a `quadratic' $U$-statistic if the non-zero eigenvalues of $K$ are all positive.  

If $(S_n)_{n\in\bN}$ is a sequence
of statistics $S_n=S_n(Z_1,\ldots,Z_n)$ that is asymptotically equivalent to the sequence of $U$-statistics
$U_{h_2,n}$ in the sense that $nS_n\,-\,nU_{h_2,n}\to 0$ in probability, then $\mu(K)$ is also the limit distribution 
for $S_n$ as $n\to\infty$. The sequence $S_n=U_{h,n}/\binom{k}{2}$, $n\ge k$, of scaled $U$-statistics 
\eqref{eq:Ustat} is the most important example; see \cite[p.\,190]{Serfling}.  

To see the connection with pattern counting let $\sigma\in\bS_k$, $E=[0,1]^2$,
and define the kernel $h=h_\sigma=h_\sigma(z_1,\ldots,z_k)$ to be 1 if the pattern associated with 
its arguments is equal to $\sigma$, and 0 otherwise. This leads to $U_n=t(\sigma,\Pi_n)$ and, by linearity, 
$L_a(\Pi_n)$ is the $U$-statistic associated with $h^a=\sum_{\sigma\in\bS_k} a_\sigma h_\sigma$. 
Thus, the kernels of interest in the present application are indeed bounded. This is enough 
for a strong law of large numbers, meaning that, $P$-almost surely, $U_n\to EU_k$ as $n\to\infty$. 

Let $r_k(y_1,\ldots,y_k)$ and $q_k(x_1,\ldots,x_k)$ be the rank vectors 
of the respective coordinates of the input $z_1=(x_1,y_1),\ldots,z_k=(x_k,y_k)$. 
Then the kernel $h^a$ may be written as 
\begin{equation}\label{eq:hfac}
   h^a(z_1,\ldots,z_k) 
          = \tilde h^a\bigl(r_k(y_1,\ldots,y_k)\circ q_k(x_1,\ldots,x_k)^{-1}\bigr)
\end{equation}
with $\tilde h^a:\bS_k \to \bR$ given by $\tilde h^a(\sigma)=a_\sigma$ for all $\sigma\in\bS_k$. 
Equation~\eqref{eq:hfac} relates the patterns in $(Z_i)_{i\in\bN}$ to permutations in the component sequences   
$(X_i)_{i\in\bN}$ and $(Y_i)_{i\in\bN}$. Under the hypotheses of independence the $Z$-variables are uniformly distributed
on the unit square, which implies that the components are uniformly distributed on the unit interval. The operators $K$ 
thus lead us to consider the (real) Hilbert spaces  
\begin{equation*}
\bH_1:=L^2\bigl([0,1],\cF_1,\unif([0,1])\bigr), 
     \quad \bH_2:=L^2\bigl([0,1]^2,\cF_2,\unif([0,1]^2)\bigr),
\end{equation*}
where $\cF_1,\cF_2$ are the respective Borel $\sigma$-fields. 
The second space may be written as the tensor product $\bH_2=\bH_1\otimes\bH_1$; 
see e.g.\ \cite[Lemma II.4.8]{Conway}. This implies, for example, 
that $\{\varphi_i\otimes \psi_j:\, i,j\in \bN\}$ is a complete orthonormal system 
for $\bH_2$ if $\{\varphi_i:\, i\in\bN\}$ and $\{\psi_j:\, j\in\bN\}$ are both
complete orthonormal systems for $\bH_1$. 

Let $T^A$, $A\in \{B,C,D,E,F,\DE\}$, be the family of tests introduced in the previous subsection, where we choose
the critical value at level $\alpha$ to be the respective upper $\alpha$-quantile $t^A_{n,\alpha}$ of the distribution 
of $T^A_n$ under the hypothesis of independence.
Using the above we will derive in Section \ref{sec:distr} the limiting null distribution of $T^A_n$,
which is a $U$-statistic of order four. 


\subsection{Gaussian processes.}\label{subsec:processes} 
We recall that a Gaussian process $W=(W_t)_{t\in E}$ indexed by a set $E$ 
is a family of random variables 
with the property that, for all $k\in\bN$ and $t_1,\ldots,t_k\in E$, the random vector $(W_{t_1},\ldots,W_{t_k})$ has a 
$k$-dimensional normal distribution. As $W$ may be regarded as a random variable with values in the 
product space $\bR^E$, its distribution is characterized by its mean function $\mu_W(t)=EW_t$ and 
covariance function $\rho_W(s,t)=\cov(W_s,W_t)$, $s,t\in E$. We say that $W$ is centered if $\mu_W\equiv 0$.  

Here we will always have $E=[0,1]$ or $E=[0,1]^2$ and $\rho_W$ will be continuous on $E\times E$.
Then the paths $t\mapsto W_t$ are mean square continuous with probability 1, and $W$ 
may be seen as a variable with values in $\bH=\bH_1$ or $\bH=\bH_2$. Every 
orthonormal base $(\phi_i)_{i\in I}$ of~$\bH$ then provides a representation 
$W= \sum_{i\in I} \langle W,\phi_i\rangle\, \phi_i$,
with convergence and inner product referring to the Hilbert space $\bH$. A special choice is of particular 
importance: The covariance function of the process defines a Hilbert--Schmidt operator 
\begin{equation*}
K:\bH\to\bH, \  \  (Kf)(x)=\int_E  \rho_W(x,y)f(y)\, dy,
\end{equation*} 
and its spectral decomposition $(\lambda_i,\psi_i)$, $i\in\bN$, leads to what is known as 
the \emph{Kac--Siegert representation} or \emph{Karhunen--Lo\`eve expansion} of the process,
\begin{equation}\label{eq:KacSiegert}
             W = \sum_{i \ge 1} \sqrt{\lambda_i}\, \xi_i\,\psi_i  ,
\end{equation}  
with independent standard normals $\xi_i$, $i\in\bN$;
in particular, $\|W\|^2 = \sum_{i\ge 1} \lambda_i \xi_i^2$. 
Hence, apart from a deterministic shift by the trace $\sum_{i \ge 1} \lambda_i$ of $K$, 
the limiting null distribution in~\eqref{eq:Ustatconv} is the same as that of $\|W\|^2$; see also~\cite[p.\,121]{Gregory}. 

The above may be reverted: If $K$ is given, as in Section~\ref{subsec:Ustat}, where
$K$ is assumed to be positive and of trace class, i.e. the non-zero eigenvalues $\lambda_j$, $j\ge 1$, of $K$
are positive, and $\sum_{j\ge 1} \lambda_j < \infty$, we may 
use~\eqref{eq:KacSiegert} to obtain a centered Gaussian process $W$ that has $K$ as the 
Hilbert--Schmidt operator associated with its covariance function $\rho_W$. 

Finally, suppose that $X$ and $Y$ are processes with paths in $\bH_1$, with Kac-Siegert representations 
$X \eqdistr \sum_{j\ge 1} \sqrt{\vphantom{\lambda_j} \eta_j} \, \xi^X_j\,\varphi_j$,
$Y \eqdistr \sum_{k\ge 1 } \sqrt{\vphantom{\lambda_j}\mu_k} \, \xi^Y_k \,\psi_k$,
and covariance functions $\rho_X,\rho_Y$ respectively. We can then combine these  
into a process $Z$ with paths in $\bH_2=\bH_1\otimes\bH_1$ and covariance 
function $\;\rho_Z=\rho_X\otimes\rho_Y$ via 
$Z= \sum_{j,k\ge 1}\sqrt{\vphantom{\lambda_j}\eta_j\mu_k} \, \xi_{j,k} \,\varphi_j\otimes\psi_k$, 
using a family $\{\xi_{j,k}:\, j,k\ge 1\}$ of independent standard normals. 

Using~\eqref{eq:Ustatconv} we obtain the limiting null distribution of the linear pattern statistic
$T_n^A$, with $A\in \{B,C,D,E,F,\DE\}$, through the non-zero eigenvalues of the respective Hilbert--Schmidt operator $K=K(A)$. 
One way to achieve this is to find a centered Gaussian process where the spectral decomposition of the 
integral operator  $K$ associated with its covariance function is known, see 
Section~\ref{sec:distr}. In  Section~\ref{sec:eff}, we will see 
that, in the context of local alternatives, the eigenfunctions are also important.

A basic reference for the above is the monograph~\cite{JansonGHS}, to which we refer for details, 
proofs and additional material; 
see also~\cite[Section 12.3]{vdV}. The process view also relates the `chaos' decomposition of Gaussian 
processes to the Hoeffding decomposition of $U$-statistics and thus provides a background for classical results such as~\eqref{eq:Ustatconv}. It can further serve as a framework for the treatment of contiguous
alternatives in connection with efficiency considerations.

\section{Limiting null distributions}\label{sec:distr}
We begin with a general outline.
The main results in this section are Theorems~\ref{thm:asdistr} and~\ref{thm:asdistrDE}, 
which provide representations for the respective limiting null distributions for the tests 
introduced in Section~\ref{subsec:quasi}.
Such results can be used to obtain critical values that are asymptotically correct. 
The representations are based on the spectral structure of the associated Hilbert--Schmidt 
operators $K$.  It turns out that in five of the  six cases $K$ can be written as a  tensor product 
as explained in Section~\ref{subsec:Ustat}. The spectral representations for the factors are given 
in Proposition~\ref{prop:eig_g1}. Only three different factor operators appear. The first of
these  is related to the Cram\'er--von Mises goodness-of-fit statistic, as already noted in~\cite{NWD},
for the second we find a similar connection  to Watson's goodness-of-fit test for distributions 
on the unit circle. For the third factor we use a decomposition of the operator $K$ of interest as 
$K=K_1+K_2$, where  the spectral decomposition of $K_1$ is known and  $K_2$ is a projection. 
The results for the factors then lead to the spectral representation of the  products in Proposition~\ref{prop:eigstruct1}.  For the remaining case
we do not have such a simple tensor product  decomposition, and some extra work is needed, 
leading to  Proposition~\ref{prop:eig_g4}.  As with the third factor in Proposition~\ref{prop:eig_g1}, but now 
on $\bH_2$ rather than on $\bH_1$, the first step is to write the operator of interest  as a `perturbation' 
of an operator with known spectral decomposition.

In detail we consider the limit distributions under the hypothesis of independence for the test statistics 
$T_n^A$  based on the sets $B,C,D,E,F$ in Table~\ref{tab:perm4}, see~\eqref{eq:deftests}, but also
that for the test statistic $T_n^{\DE}=T_n^D + T_n^E$. 
For the BDY test the limiting distribution and the behavior under contiguous
alternatives have already been discussed in~\cite{NWD} and \cite{DDB}. Here too, our analysis
relies on the asymptotics of degenerate  $U$-statistics, which we obtain via the respective
$h$-functions in~\eqref{eq:h1h2} and the eigenstructure
of the associated Hilbert--Schmidt operators $K$ in~\eqref{eq:specK}.  

For a given significance level $\alpha\in (0,1)$ the hypothesis $H_0$ of independence is rejected
if $T_n^A>t_{n,\alpha}^A$, with $t_{n,\alpha}^A$ as the upper $\alpha$ quantile of $T_n^A$ in the case where the
hypothesis is true.
The BDY test corresponds to the set~$B$; see \cite{BergsmaDassios,EZLeng,NWD} and  the
references given there for variants and representations of the test statistic. 
We recall that $T_n^A$ is a $U$-statistic of order~$4$ and that  the associated functions 
$h_1^A$, $h_2^A$ are given by
\begin{equation*}
h_1^A(z_1)=\bE [T_4^A|Z_1=z_1], \quad h_2^A(z_1,z_2) = \bE [T_4^A|Z_1=z_1,Z_2=z_2],
\end{equation*}
$z_1,z_2\in[0,1]^2$, where the expectation refers to a sample $Z_1,Z_2,Z_3,Z_4$ 
of random vectors with independent components. We write $K(A)$ for the operator with kernel  
$h_2^A$, $A\in \{B,C,D,E,F,\DE\}$.  

\begin{table}[t]

\renewcommand{\arraystretch}{1.1}
\begin{tabular}{lcclcclcclc}
\noalign{\hrule}

\noalign{\vspace{2mm}}
    1234 & $\frac{1}{2}(1-v)^2$  & \  & 2134 & $0$ & $\ $ 
              &  3124 & $0$  &\ & 4123 & $0$  \\ 
    1243 & $\frac{1}{2}(1-v)^2$  & \ & 2143 & $0$ & $\  $
              &  3142 & $0$  &\  & 4132 & $0$   \\
    1324 & $(1-v)(v-u)$  & \ & 2314 & $u(1-v)$ & \  
              & 3214 & $0$  &\ & 4213 & $0$ \\
    1342 & $(1-v)(v-u)$  &\  & 2341 & $u(1-v)$ & \  
              & 3241 & $0$  &\  & 4231 & $0$ \\
    1423 & $\frac{1}{2}(v-u)^2$  & \  & 2413 & $u(v-u)$ & \
              &  3412 & $\frac{1}{2}u^2$  &\ & 4312 & $0$ \\
    1432 & $\frac{1}{2}(v-u)^2$  & \ & 2431 & $u(v-u)$ & \  
              &  3421 & $\frac{1}{2}u^2$  &\  & 4321 & $0$\\
\noalign{\vspace{1mm}}
\noalign{\hrule}
\noalign{\vspace{2mm}}
\end{tabular}
 
\begin{tabular}{lcclcclcclc}
\noalign{\hrule}
\noalign{\vspace{2mm}}
    1234 & $0$ &$\  $& 2134 & $\frac{1}{2}(1-u)^2$ &$\ $ 
              &  3124 &$(1-u)(u-v)$ &$\ $&  4123  &$\frac{1}{2}(u-v)^2$ \\ 
    1243 & $0$  & $\  $& 2143 &$\frac{1}{2}(1-u)^2$ &$\ $  
              &  3142  &$(1-u)(u-v)$ &$\ $&  4132  &$\frac{1}{2}(u-v)^2$ \\
    1324 & $0$ &$\ $& 2314 & $0$ & \  
              & 3214 &$v(1-u)$ &\ &4213 &$v(u-v)$ \\
    1342 & $0$ &$\  $& 2341 & $0$& $\ $ 
              & 3241&$v(1-u)$ &$\ $& 4231  &$v(u-v)$ \\
    1423  & $0$ &$\ $& 2413  & $0$ &$\  $ 
              &  3412  &$0$ &$\  $& 4312 &$\frac{1}{2}v^2$ \\
    1432 & $0$ &$\ $ &2431 & $0$ &$\ $
              &  3421 &$0$ &$\ $& 4321 &$\frac{1}{2}v^2$ \\
\noalign{\vspace{1mm}}
\noalign{\hrule}
\noalign{\vspace{2mm}}
\end{tabular}
\caption{\label{tab:h2}$\cL[Q_4=\pi|X_1=u,X_2=v]$ for $u< v$ (top) and $u>v$ (bottom)}
\end{table}

Let $X_1,X_2,X_3,X_4$ be independent and uniformly 
distributed on the unit interval. Then $Q_4\sim\unif(\bS_4)$, and elementary arguments provide 
the conditional mass function $p_{u,v}$ of $\cL[Q_4|X_1=u,X_2=v]$, as summarized in 
Table~\ref{tab:h2}. Of course, $\cL[R_4|Y_1=u,Y_2=v]=\cL[Q_4|X_1=u,X_2=v]$ and,
using independence, the conditional distribution of $\Pi_4$ can be calculated 
via
\begin{equation*}
 P[\Pi_4=\pi|Z_1=(x_1,y_1),Z_2=(x_2,y_2)] 
     = \sum_{\sigma\in\bS_4} p_{y_1,y_2}(\pi\circ\sigma)\; p_{x_1,x_2}(\sigma)
       \text{ for all}\,\pi\in\bS_4.
\end{equation*} 
From these we obtain $h_2^A$ by summing over $\pi\in A$ and centering. The five
functions $h_2^A$, $A\in \{B,C,D,E,F\}$, all factorize, in the sense that for all $z_1=(x_1,y_1)$ and $z_2=(x_2,y_2)$ 
in the unit square,
\begin{align*}
   h^B_2(x_1,y_1,x_2,y_2) \,&=\, \frac{1}{6}\, g_1(x_1,x_2)g_1(y_1,y_2), \\ 
   h^C_2(x_1,y_1,x_2,y_2) \,&=\, \frac{1}{6}\, g_2(x_1,x_2)g_2(y_1,y_2), \\ 
   h^D_2(x_1,y_1,x_2,y_2) \,&=\, \frac{1}{6}\, g_2(x_1,x_2)g_1(y_1,y_2), \\ 
   h^E_2(x_1,y_1,x_2,y_2) \,&=\, \frac{1}{6}\, g_1(x_1,x_2)g_2(y_1,y_2), \\ 
   h^F_2(x_1,y_1,x_2,y_2) \,&=\, \frac{1}{2}\,g_3(x_1,x_2)g_3(y_1,y_2), \\
   \intertext{with}
   g_1(u,v)\,  &= \, 3u^2+3v^2-6u\vee v + 2, \\ 
   g_2(u,v)\,  &= \, 6u^2+6v^2-12 u v + 6u\wedge v- 6 u\vee v + 1, \\ 
  g_3(u,v)\,  &= \, 3u^2+3v^2-4uv +2u\wedge v -4 u\vee v+ 1
\end{align*}
for $0\le u,v\le 1$. For  $A=B$ this has already been obtained in~\cite{NWD}.
The invariance properties stated at the end of 
Section~\ref{sec:basics} are reflected by the 
symmetry of the factorizations for $B$, $C$, and $F$.
Note that swapping $x$- and $y$-components relates the functions for the remaining sets $D$ and $E$.
As  
\begin{equation*}
  h^{\DE}_2(x_1,y_1,x_2,y_2) \,=\, \frac{1}{6} \left (g_2(x_1,x_2)g_1(y_1,y_2) + g_1(x_1,x_2)g_2(y_1,y_2)\right ),
\end{equation*}
the invariance properties also hold for $A=\DE$. 
Obviously, $h_1$ can be obtained from $h_2$ by integrating over $Z_2$. It turns out that
$h_1^A\equiv 0$ almost surely under independence in all six cases, 
which shows that the $U$-statistics are indeed degenerate.

We next determine the spectral decomposition \eqref{eq:Kspec} of the operator 
$K_i:\bH_1\to \bH_1$ defined by
$(K_i f)(u)=\int_0^1 g_i(u,v) f(v)\, dv$, $ f\in \bH_1$, $i=1,2,3$.
The function $u\mapsto (K_i f)(u)$ is continuous for all $ f\in\bH_1$. 
Hence, if $(\lambda_j,\varphi_j)$ satisfies the eigenvalue relation 
$K_i \varphi_j = \lambda_j \varphi_j$ with non-zero
$\lambda_j$, then there is a continuous version of $\varphi_j:[0,1]\rightarrow \bR$ such that
\begin{equation*}
  \int_0^1 g_i(u,v)\,\varphi_j(v)\,dv\,=\,\lambda_j\varphi_j(u)\quad\text{for all }\,u \in [0,1].
\end{equation*}
It then even follows that this function $\varphi_j$ is twice continuously differentiable.   
It is easily verified that the Hilbert--Schmidt operators $K_i$ are positive in the sense that 
$\langle K_if,f\rangle \,\ge 0$ for all $f\in \bH_1$; this implies that the non-zero eigenvalues
of these operators are positive. In fact, the $K_i$ are covariance operators of
special Gaussian processes related to the classical Brownian bridge; see 
Section~\ref{subsec:processes} and Remark~\ref{rem:repr} below.
Further, as the kernels $g_i$ are symmetric and continuous, 
it follows from Mercer's theorem (see \cite[Theorem 11.6.7]{Dieudonne}) that  
\begin{equation*}
  g_i(u,v)\,=\,\sum_{j\ge 1} \lambda_j \varphi_j(u)\varphi_j(v),\quad 0\le u,v\le 1,
\end{equation*}
where the series converges absolutely and uniformly on the unit square. As a consequence, 
we have the trace formula,
\begin{equation}\label{eq:trace}
  \int_0^1 g_i(u,u)\,du \ = \ \sum_{j\ge 1} \lambda_j. 
\end{equation}

Proposition \ref{prop:eig_g1} summarizes the spectral decompositions of the operators $K_i$, $i=1,2,3$. 
Its first part is known, but in view of the connection to Gaussian processes we include the proof. 

\begin{proposition}\label{prop:eig_g1}
\emph{(a)} \cite[Lemma 4.3]{NWD} The non-zero eigenvalues of $K_1$ are
$\lambda_j=6/(\pi^2j^2)$, $j\in\bN$, all with multiplicity~1.
A corresponding set of orthonormal eigenfunctions is $\{\varphi_j:\, j\in\bN\}$, 
with $\varphi_j(u)=\sqrt{2}\cos(\pi j u)$. 

\vspace{.7mm}
\noindent
\emph{(b)} The non-zero eigenvalues of $K_2$ are $\lambda_j=3/(\pi^2j^2)$, $j\in\bN$, 
all with multiplicity~2.
A corresponding set of orthonormal eigenfunctions is $\{\varphi_j:\, j\in\bN\}\cup\{\psi_j:\, j\in\bN\}$, 
with $\varphi_j(u)=\sqrt{2}\cos(2\pi j u)$ and  $\psi_j(u)=\sqrt{2}\sin(2\pi j u)$. 

\vspace{.7mm}
\noindent
\emph{(c)} The operator $K_3$ has the simple eigenvalues 
$\lambda_j=3/(2\pi^2j^2)$, $j\in\bN$, with associated orthonormal eigenfunctions
$\varphi_j(u)=\sqrt{2}\cos(2\pi j u)$, $j\in\bN$, and the simple
eigenvalues $\mu_j=1/\gamma_j$, $j\in\bN$, where $\gamma_j\in \left (2\pi^2(j-1)^2/3,2\pi^2j^2/3\right )$, $j\in\bN$,
are the positive zeros of the function
\begin{equation}\label{eq:omega}
   \omega(z)\,=\,2/3\,+\,\sqrt{3z/2}\cot(\sqrt{3z/2})/3,\quad z\in\bR, 
\end{equation}
with associated orthonormal eigenfunctions
\begin{equation}\label{eq:Fourier}
  \chi_j(u)\,=\,c_j\left (\cos\bigl (\sqrt{6/\mu_j}\,u\bigr )\ + \ \sqrt{8\mu_j/3}\,\sin \bigl (\sqrt{6/\mu_j}\,u\bigr )\right ),\quad 0\le u \le 1,
\end{equation}
where~ $c_j\,=\,(1/2\,+\,2\mu_j)^{-1/2}$~ for $j\in \bN$.
\end{proposition}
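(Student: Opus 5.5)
The plan is to reduce each eigenvalue problem $K_i\varphi=\lambda\varphi$, $\lambda\neq0$, to an explicit structure. For (a) and (c) I will use a second order ODE with boundary conditions. For (b) I will use a convolution structure on the circle. The trace formula \eqref{eq:trace} is then used to confirm that no eigenvalues are missing. The first step, common to all three parts, is to check that $\int_0^1 g_i(u,v)\,dv=0$ for all $u$; this is the degeneracy $h_1^A\equiv0$ in disguise. For instance, $\int_0^1 u\vee v\,dv=(1+u^2)/2$, so the integral of $g_1$ vanishes. Hence constants lie in the kernel of $K_i$, and every eigenfunction $\varphi$ with $\lambda\neq0$ satisfies $\int_0^1\varphi=0$. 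As noted before the proposition, $\varphi$ may be taken continuous, even $C^2$.

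For (a), I differentiate $\lambda\varphi(u)=\int_0^1 g_1(u,v)\varphi(v)\,dv$ twice in $u$, splitting the integral at $v=u$. This gives
\[
\lambda\varphi'(u)=6u\int_0^1\varphi-6\int_0^u\varphi(v)\,dv ,
\qquad
\lambda\varphi''=6\int_0^1\varphi-6\varphi=-6\varphi .
\]
The first identity gives the Neumann conditions $\varphi'(0)=\varphi'(1)=0$. The solutions are therefore $\varphi=\sqrt2\cos(\pi ju)$ with $\lambda=6/(\pi^2j^2)$; $j=0$ is excluded by the mean-zero condition. Completeness follows from the trace: $\int_0^1 g_1(u,u)\,du=\int_0^1(6u^2-6u+2)\,du=1=\sum_j 6/(\pi^2j^2)$.

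For (b), I observe that $g_2(u,v)=6t^2-6t+1$ with $t=|u-v|$. This equals $6B_2(t)$, where $B_2(t)=t^2-t+1/6$ is the Bernoulli polynomial. Since $B_2(1-t)=B_2(t)$, $g_2$ is a function of $u-v$ modulo $1$, so $K_2$ is a convolution operator on the circle $\bR/\bZ$. It is therefore diagonalized by the trigonometric system. The classical expansion $B_2(t)=\sum_{k\ge1}\cos(2\pi kt)/(\pi^2k^2)$ yields the eigenvalue $\tfrac12\cdot 6/(\pi^2j^2)=3/(\pi^2j^2)$ for both $\sqrt2\cos(2\pi ju)$ and $\sqrt2\sin(2\pi ju)$, and the eigenvalue $0$ for constants. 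The trace $\int_0^1 g_2(u,u)\,du=1=2\sum_j 3/(\pi^2j^2)$ confirms that the list is complete.

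For (c), the key observation is the decomposition
\[
g_3(u,v)=\tfrac12 g_2(u,v)+2\bigl(u-\tfrac12\bigr)\bigl(v-\tfrac12\bigr),
\]
so $K_3=\tfrac12K_2+\tfrac16\,e\otimes e$ with $e(u)=\sqrt{12}(u-\tfrac12)$, a unit vector. The space $\bH_1$ splits into functions even about $u=\tfrac12$ and functions odd about $u=\tfrac12$. On the even part, $e$ acts trivially, so the cosines $\sqrt2\cos(2\pi ju)$ remain eigenfunctions with eigenvalues $3/(2\pi^2j^2)$. On the odd part, which is spanned by the sines and contains $e$, $K_3$ is a rank-one positive perturbation of $\tfrac12K_2$. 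Standard rank-one perturbation theory then shows the following. The new eigenvalues $\mu_j$ are the roots of the secular equation $1=\tfrac16\sum_k\langle e,\psi_k\rangle^2/(\mu-3/(2\pi^2k^2))$, where $\psi_k=\sqrt2\sin(2\pi ku)$. They are simple and interlace strictly with the unperturbed values, giving $1/\mu_j\in(2\pi^2(j-1)^2/3,\,2\pi^2j^2/3)$. Strictness requires $\langle e,\psi_k\rangle\neq0$ for all $k$, which is elementary.

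To obtain the closed form, I will not sum the secular series. Instead I redo the ODE argument directly for $g_3$: differentiating twice gives $\lambda\chi''=-6\chi+c$, where the constant $c$ is a multiple of $\int_0^1 v\chi(v)\,dv$. The odd symmetry about $\tfrac12$ and the boundary values of $\lambda\chi'$ at $0$ and $1$ then force the stated form \eqref{eq:Fourier} together with the transcendental condition $\omega(1/\mu)=0$ from \eqref{eq:omega}. Finally, $c_j$ is fixed by normalization.

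I expect this last step to be the main obstacle: getting the inhomogeneous ODE, its boundary conditions and the resulting cotangent equation exactly right, and matching the result with the interlacing obtained from the perturbation argument.
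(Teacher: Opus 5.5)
Your proposal is correct. For (a) and (b) it takes a genuinely different route from the paper, which does not derive these spectra itself. The paper writes $g_1=6\tilde g_1$ and $g_2=12\tilde g_2$, recognizes $\tilde g_1$ and $\tilde g_2$ as the kernels behind the Cram\'er--von Mises and Watson goodness-of-fit statistics, and imports the known eigenstructure from the literature. Your Neumann problem and your identity $g_2(u,v)=6B_2(|u-v|)$, combined with the Fourier series of $B_2$, are self-contained and give completeness directly: the ODE determines every eigenfunction, and in (b) you get the Mercer expansion outright. The trace checks are then only confirmations.

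For (c), your splitting $K_3=\tfrac12K_2+\tfrac16\,e\otimes e$ and your secular equation coincide with the paper's (its $h^*$ equals $-e/\sqrt6$). The routes diverge after that. The paper sums the secular series with the partial-fraction expansion of $\pi w\cot(\pi w)$ to get $\omega$. It obtains $\chi_j$ by summing a Fourier sine series and computes $c_j$ via identities for $\sin 4x$ and $\cot 4x$. It leaves the interval for $\gamma_j$ implicit. Your interlacing argument makes that interval explicit, and the ODE is the shorter way to the closed form. With $x=\sqrt{3/(2\mu)}$, the odd solution $A\sin\bigl(2x(u-\tfrac12)\bigr)$ equals $-A\sin x\,\bigl(\cos(2xu)+(2/x)\sin(2xu)\bigr)$ once $x\cot x=-2$; note $2/x=\sqrt{8\mu/3}$ and $2x=\sqrt{6/\mu}$. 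Moreover, $\|\cos(2xu)+(2/x)\sin(2xu)\|^2=\|\sin(2x(u-\tfrac12))\|^2/\sin^2x=\tfrac12+3/x^2=\tfrac12+2\mu$, which gives $c_j$ at once.

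One correction to the step you flagged: after two differentiations the equation is homogeneous. One differentiation gives $\mu\chi'(u)=6u\int_0^1\chi-4\int_0^1v\chi(v)\,dv+2\int_u^1\chi-4\int_0^u\chi$. Differentiating again, $\mu\chi''=6\int_0^1\chi-6\chi=-6\chi$ by the mean-zero property. So the constant is a multiple of $\int_0^1\chi$, which vanishes, and not of $\int_0^1 v\chi(v)\,dv$.

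The first moment enters only through the nonlocal boundary conditions $\mu\chi'(0)=\mu\chi'(1)=-4\int_0^1 v\chi(v)\,dv$. For the odd solution these reduce exactly to $x\cot x=-2$, i.e.\ $\omega(1/\mu)=0$. Finally, $x\cot x$ is strictly decreasing on $(0,\pi)$ and on each $(k\pi,(k+1)\pi)$. Hence $\omega$ has exactly one zero in each interval $\bigl(2\pi^2k^2/3,\,2\pi^2(k+1)^2/3\bigr)$. This matches your interlacing count and shows that the $\gamma_j$ are precisely the positive zeros of $\omega$.
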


\begin{proof} (a) We note that $g_1$ may be written as 
\begin{equation*}
g_1(u,v)\,  = \,6 \,\Tilde g_1(u,u),\quad\text{with }\,\Tilde g_1(u,v)\,=\,\frac{1}{2}(u^2+v^2)\, - \, u\vee v
                \, + \,\frac{1}{3}. 
\end{equation*}
We now exploit the connection to the classical Cram\'er--von Mises goodness-of-fit statistic for testing the hypothesis of uniformity on the unit interval. Let $(X_i)_{i\in\bN}$ be a sequence of
independent $\unif(0,1)$-distributed random variables.  
The test statistics may be written as $T_n=n^{-1}\sum_{i,j=1}^n h(X_i,X_j)$ with 
\begin{equation*}
           h(x,y)\,:=\,\int_0^1(1_{[0,s]}(x)-s)(1_{[0,s]}(y)-s)\,ds\, =\, \frac{1}{2}(x^2+y^2)-x\vee y +\frac{1}{3}.
\end{equation*}
Hence the off-diagonal part of the sum is a $U$-statistic with kernel $\tilde g_1$. The statement in (a) now 
follows from the general comments in Section~\ref{subsec:processes}  and the treatment of the limiting null 
distribution given e.g.\ in~\cite[Example 12.13]{vdV}.

\vspace{.7mm}
(b) We have
\begin{equation*}
g_2(u,v)\,  = \,12 \,\Tilde g_2(u,v),\quad\text{with }\,\Tilde g_2(u,v)\, = \, \frac{1}{2}(u - v)^2 \, - \,
                \frac{1}{2}(u + v)\, + \,u\wedge v + \frac{1}{12}. 
\end{equation*}
The kernel $\tilde g_2$ is connected to Watson's goodness-of-fit test for testing the 
hypothesis of uniformity on the unit circle; see \cite{Durbin} 
or \cite[Chapt.\,3]{ShorackWellner}. Let $B=(B_t)_{0\le t\le 1}$ be the Brownian bridge. 
Then a representing Gaussian process $X^{\text{\tiny Wa}}$ may be written as 
\begin{equation*}
        X^{\text{\tiny Wa}}_t =B_t-\int_0^1 B_s\, ds,
\end{equation*} 
and, again, a decomposition of the corresponding kernel is known~\cite[p.\,220]{ShorackWellner}. 
Taking the scalar factor into account we obtain the statement in (b).

\vspace{.7mm}
(c) The kernel $g_3$ can be written as
\begin{equation*}
   g_3(u,v)\,  = \frac{1}{2} g_2(u,v)\, + \,h^*(u)\,h^*(v),\,
                  \text{ with }\, h^*(u)\, :=\, \sqrt{2}\, \Bigl (\frac{1}{2} - u\Bigr ), \ 
                       0\le u\le 1.
\end{equation*}
Let $\varphi_j,\,\psi_j$, $j\in\bN$, be the functions defined in part (b). Noticing   
\begin{equation}\label{eq:Fourier2}
  \int_0^1 h^*(u)\,\varphi_j(u)\,du = 0\quad\text{and}\quad \int_0^1 h^*(u)\,\psi_j(u)\,du = \frac{1}{\pi j}
\end{equation}  
for all $j\in\bN$, we deduce that $\lambda_j = 3/(2\pi^2j^2 )$, $j\in\bN$, are eigenvalues
of $K_3$, with associated eigenfunctions $\varphi_j$, $j \in \bN$.

To derive the second set of eigenvalues
and eigenfunctions stated in the theorem, we argue as in \cite{Ba} and \cite{BaTa}, 
where related problems are dealt with. A solution 
$(\mu,\chi)$ of 
\begin{equation*}
\int_0^1 g_3(u,v)\,\chi(v)\,dv\,=\,\mu \, \chi(u),\quad  0\le u\le 1,
\end{equation*} 
with 
$\chi:[0,1]\rightarrow \bR$
orthogonal to $\{\varphi_j:j\in\bN\}$, can be expressed as
\begin{equation*}
  \chi \,=\,\sum_{k=1}^\infty \alpha_k \psi_k, \text{ with } 
                  \ \alpha_k := \langle \chi,\psi_k\rangle,
\end{equation*}
where the series converges in $\bH_1$. For $h^*$ we have the
analogous series representation 
\begin{equation*}
  h^*\,=\,\sum_{k=1}^\infty \frac{1}{\pi k}\,\psi_k.
\end{equation*}  
With $c:=\int_0^1h^*(v)\,\chi(v)\,dv$ the identity
\begin{align*}
  \int_0^1\Bigl (\frac{1}{2}\,g_2(u,v)\,+\,h^*(u)h^*(v)\Bigr )\,\chi(v)\,dv\ 
        = \ \mu\, \chi(u),\quad 0\le u\le 1,
\end{align*}
can equivalently expressed as
\begin{equation*}
  \sum_{k=1}^\infty\Bigl (\alpha_k\frac{3}{2\pi^2 k^2}\,+\,\frac{1}{\pi k}\,c\Bigr )\,\psi_k\ 
       = \ \sum_{k=1}^\infty \mu \,\alpha_k\psi_k.
\end{equation*}
From this it follows that
\begin{equation*}
  \alpha_j \Bigl (\frac{3}{2\pi^2 j^2} \, - \, \mu \Bigr ) \ = \ -\frac{c}{\pi j}\quad \text{for all }\,j\in\bN.
\end{equation*}
If there was a $j_0\in\bN$ such that $\mu = \frac{3}{2\pi^2j_0^2}$, 
then we would have that
$c=0$, and therefore that $\alpha_j=0$ for all $j\neq j_0$, i.e. $\chi = \alpha_{j_0}\psi_{j_0}$, which due
to \eqref{eq:Fourier2} is impossible.
Thus,
\begin{equation}\label{eq:eigenphi}
  \chi\, = \,-c\sum_{k=1}^\infty\frac{\frac{1}{\pi k}}{\frac{3}{2 \pi^2 k^2}\,-\mu }\,\psi_k.
\end{equation}
Using this we obtain that
\begin{align*}
  1\ = \int_0^1 \frac{\chi(u)}{c}\,h^*(u)\,du &\,=\,-\bigintsss_0^1\biggl ( \sum_{j=1}^\infty\frac{\frac{1}{\pi j}}{\frac{3}{2 \pi^2 j^2}\,-\mu }\,\psi_j(u)\biggr )\biggl (\sum_{k=1}^\infty \frac{1}{\pi k}\,\psi_k(u) \biggr )\,du\\
                                                &\,=\,- \sum_{k=1}^\infty \frac{\frac{1}{\pi^2 k^2}}{\frac{3}{2\pi^2 k^2}\,-\,\mu }\,=\, -\frac{2}{3}  \sum_{k=1}^\infty \frac{1}{1 \, - \, \frac{2\pi^2 k^2}{3} \mu }.
\end{align*}
We define the function
\begin{equation*}
  \Tilde\omega(z)\,:=\,1\,+\,\frac{2}{3} z \sum_{k=1}^\infty \frac{1}{z\,-\,\frac{2\pi^2 k^2}{3}},\,\quad z\in\bR,
\end{equation*}
and note that $\gamma = 1/\mu$ is a positive zero of $\Tilde \omega$. For $z = \frac{2\pi^2}{3} w^2$ we have
\begin{equation*}
  \Tilde\omega(z)\,=\,1\,+\,\frac{2}{3}w^2\sum_{k=1}^\infty\frac{1}{w^2\,-\,k^2}
  \ = \ 1\,+\,\frac{1}{3}\bigl (\pi w \cot(\pi w) \,-\,1\bigr ),
\end{equation*}
see \cite[Formula 1.421, 3.]{GradRyzh} for the last identity. This yields \eqref{eq:omega}. To derive
\eqref{eq:Fourier}, let $\mu_j=1/\gamma_j$ for $j\in \bN$, with $(\gamma_j)_{j\in\bN}$ the increasing
sequence of simple positive zeros of $\omega$, and $\chi_j$ for $j\in\bN$ the eigenfunction associated with
$\mu_j$.  By \eqref{eq:eigenphi} we get using the Fourier sine expansion
\cite[formula 1.445, 5.]{GradRyzh} that the orthonormal eigenfunction $\chi_j$ can be expressed as    
\begin{align*}
  \chi_j(u)&\,=\,\frac{\Tilde c_j\sqrt{2}}{\pi\mu_j}\,\sum_{k=1}^\infty\frac{k\sin(2\pi k u)}{k^2 - \frac{3}{2\pi^2 \mu_j}}
           \,=\,\frac{\Tilde c_j\sqrt{2}}{\mu_j}\,\frac{\sin \left (\sqrt{\frac{3}{2\pi^2\mu_j}}(\pi - 2\pi u)\right )}
             {2\sin \left (\sqrt{\frac{3}{2\mu_j}}\right )}\\
           &\,=\,\frac{\Tilde c_j}{\sqrt{2}\mu_j}\left (\cos\bigl (\sqrt{6/\mu_j}\,u\bigr )\ - \ \cot \bigl (\,\sqrt{3/(2\mu_j)}\,\bigr )\sin \bigl (\sqrt{6/\mu_j}\,u\bigr ) \right )
\end{align*}
for $0< u < 1$, with some real positive constant $\Tilde c_j$. From $0=\omega(1/\mu_j)$ it follows that $\cot \bigl (\,\sqrt{3/(2\mu_j)}\,\bigr )\,=\,-\sqrt{8\mu_j/3}$. Putting $c_j=\frac{\Tilde c_j\sqrt{2}}{\pi\mu_j}$ this proves \eqref{eq:Fourier}.
Verify 
\begin{equation*}
  c_j=\left (\frac{1}{2} + \frac{5\mu_j}{3}  +
    \frac{1}{4}\bigl ( 1 - \frac{8\mu_j}{3} \bigr )\sqrt{\frac{\mu_j}{6}} \sin\bigl (2\sqrt{6/\mu_j}\bigr ) -
    \frac{\mu_j}{3} \cos\bigl (2\sqrt{6/\mu_j}\bigl )\right )^{-1/2}
\end{equation*}
by simple calculation; then write $2\sqrt{6/\mu_j}=4\sqrt{3/(2\mu_j)}$, and use the trigonometric identities
\begin{equation*}
\sin 4x\,=\,4\cot x\,\frac{\cot^2 x - 1}{(\cot^2 x +1)^2}\quad\text{and}\quad
\cot 4x\,=\,\frac{\cot^4 x - 6 \cot^2 x +1}{(\cot^2 x +1)^2} 
\end{equation*}  
to obtain that $c_j=(1/2\,+\,2\mu_j)^{-1/2}$.
\end{proof}

By the trace formula \eqref{eq:trace} and $\int_0^1g_3(u,u)\,du\,=\,2/3$, we should have
$1/4\,+\, \sum_{k=1}^\infty 1/\gamma_k\,=\,2/3$. Using a numerical
calculation of the first $m=10^6$ positive zeros of $\omega$ we found that 
$1/4\,+\,  \sum_{k=1}^m 1/\gamma_k\,=\,0.66666651467749\ldots$.

\begin{remark}\label{rem:repr}
(a) The Wiener process (or Brownian motion) $W=(W_t)_{0\le t\le 1}$ is a centered Gaussian process
with covariance function $\rho_W(s,t)=s\wedge t$, and the Brownian bridge $B=(B_t)_{0\le t\le 1}$ is a centered Gaussian process with covariance function $\rho_B(s,t)=s\wedge t -st$, 
$0\le s,t\le 1$. Both may be 
regarded as random elements of $\bH_1$. Beyond the distributional level these processes can be related
pathwise, e.g.\ by measurable functions  $\Psi:\bH_1\to \bH_1$. Indeed, we may define $B$ in terms of $W$
by $B_t:=W_t-tW_1$, $0\le t\le 1$. In the other direction we may write $W_t=B_t+t\xi$, 
$0\le t\le 1$, where 
$\xi$ is a standard normal random  variable, independent of $W$;  see also~\cite[p218]{Kallenberg}. 
Such arguments can be used to obtain background processes for parts (a) and (c) in the above
proposition from the process  $X^{\text{\tiny (b)}}:=\sqrt{12}X^{\text{\tiny Wa}}$ in the proof 
of part (b), via 
\begin{equation*}
      X^{\text{\tiny (a)}}_t 
               :=   \frac{1}{\sqrt{2}}X^{\text{\tiny{(b)}}}_t + \sqrt{6}\Bigl(t-\frac{1}{2}\Bigr)\xi \ \ \text{ and }\ 
     X^{\text{\tiny (c)}}_t  := \frac{1}{\sqrt{2}}X^{\text{\tiny (b)}}_t + \sqrt{2}\Bigl(t-\frac{1}{2}\Bigr)\xi,
                             \quad 0\le t\le 1,
\end{equation*}
where $\xi$ is standard normal, independent of $X^{\text{\tiny (b)}}$.
This may not be of immediate use for obtaining the spectral decompositions, but it leads to structural
results that may be of interest. The representations imply that the distributions are the same as that of
specific random shifts of a fixed scalar multiple of the Gaussian process $X^{\rm Wa}$.

\smallbreak
(b) From its Kac-Siegert representation it is easy to obtain a decomposition of a centered Gaussian 
process into a sum of independent centered Gaussian processes. In the situation of part (b) of 
Proposition~\ref{prop:eig_g1}, for example, 
we obtain $X^{\text{\tiny Wa}}=Y^{\text{\tiny Wa}}+Z^{\text{\tiny Wa}}$,  with 
\begin{equation*}
    Y^{\text{\tiny Wa}}_t := \sum_{j=1}^\infty \frac{1}{\sqrt{2}\pi j}\, \xi_j \cos(2\pi j t), \quad 
    Z^{\text{\tiny Wa}}_t := \sum_{j=1}^\infty \frac{1}{\sqrt{2}\pi j}\,\zeta_j \sin(2\pi j t),\quad 0\le t\le 1,
\end{equation*}
where $\xi_j$, $\zeta_j$ are independent standard normals.
We may regard $Y^{\text{\tiny Wa}}$ as the orthogonal projection of $X^{\text{\tiny Wa}}$ into the 
subspace $\bH_{\text{\rm\tiny e}}:=\{f\in\bH_1:\, f(x) = f(1-x)  \ \text{a.s.}\}$ of 
even functions and $Z^{\text{\tiny Wa}}$ as the projection into the space  
$\bH_{\text{\rm\tiny o}}={\bH_{\text{\rm\tiny e}}}^{\!\!\perp} 
              =\{f\in\bH_1:\, f(x) = -f(1-x) \ \text{a.s.}\}$ of odd functions. 
The decomposition $Z=X+Y$ into independent component processes, all centered Gaussians,
corresponds to the decomposition $\rho_Z=\rho_X+\rho_Y$ of the respective covariance functions. 
\brend
\end{remark}

The tensor product representation of $\bH_2$ mentioned at the end of Section~\ref{subsec:Ustat}
implies a corresponding statement for operators, see~\cite[Lemma 4.2]{NWD}, so that 
the above factorizations and the representations of the factors lead to a representation of the 
operators $K(A)$, $A\in \{B,C,D,E,F\}$, on $\bH_2$, summarized in Proposition \ref{prop:eigstruct1},
(a) - (e). Again the case $B$ has already been obtained in~\cite{NWD}.

\begin{proposition}\label{prop:eigstruct1}
Let $K(A)$ be the Hilbert--Schmidt operator on $\bH_2$ with 
kernel $h_2^A$, let $\Lambda_A$ be the set of eigenvalues of $K(A)$, $A\in\{B,C,D,E,F\}$,
and let $\bZ_*:=\bZ\setminus\{0\}$.  

\vspace{.7mm}
\noindent
\emph{(a)}
$\Lambda_B=\{\frac{6}{\pi^4j^2k^2}: j,k\in\bN\}$ is the set of eigenvalues of $K(B)$.
For $\lambda\in\Lambda_B$ let $N_{B,\lambda}=\{(j,k)\in \bN^2: \frac{6}{\pi^4j^2k^2}=\lambda\}$,
and $\Phi_{B,\lambda}=\{f_j\otimes f_k: (j,k)\in N_{B,\lambda}\}$, where
$f_j(u)=\sqrt{2}\cos(\pi j u)$, $u\in [0,1]$, for $j\in\bN$. Then 
$\# N_{B,\lambda}$ is the multiplicity of $\lambda$, and $\Phi_{B,\lambda}$ is the set of orthonormal
eigenfunctions associated with $\lambda$.

\vspace{.7mm}
\noindent
\emph{(b)} The set of eigenvalues of $K(C)$ is $\Lambda_C=\{\frac{3}{2\pi^4j^2k^2}: j,k\in\bN\}$.
For $\lambda\in\Lambda_C$ let $N_{C,\lambda}=\{(j,k)\in \bZ_*^2: \frac{3}{2\pi^4j^2k^2}=\lambda\}$,
and $\Phi_{C,\lambda}=\{f_j\otimes f_k: (j,k)\in N_{C,\lambda}\}$, where
$f_j(u)=\sqrt{2}\cos(2\pi j u)$, $u\in [0,1]$, for~$-j\in\bN$ and
$f_j(u)=\sqrt{2}\sin(2\pi j u)$, $u\in [0,1]$, for $j\in\bN$. 
Then $\# N_{C,\lambda}$ is the multiplicity of $\lambda$ and $\Phi_{C,\lambda}$ is the set of orthonormal
eigenfunctions associated with $\lambda$.

\vspace{.7mm}
\noindent
\emph{(c)}
The set of eigenvalues of $K(D)$ is $\Lambda_D=\{\frac{3}{\pi^4j^2k^2}: j,k\in\bN\}$.
For $\lambda\in\Lambda_D$ let $N_{D,\lambda}=\{(j,k)\in \bZ_*\times \bN: \frac{3}{\pi^4j^2k^2}=\lambda\}$,
and $\Phi_{D,\lambda}=\{f_j\otimes \Tilde f_k: (j,k)\in N_{D,\lambda}\}$, where
$f_j(u)=\sqrt{2}\cos(2\pi j u)$, $u\in [0,1]$, $f_{-j}(u)=\sqrt{2}\sin(2\pi j u)$, $u\in [0,1]$, for~$j\in\bN$, and
$\Tilde f_k(u)=\sqrt{2}\cos(\pi k u)$, $u\in [0,1]$, for $k\in\bN$. Then
$\# N_{D,\lambda}$ is the multiplicity of $\lambda$ and $\Phi_{D,\lambda}$ is the set of orthonormal
eigenfunctions associated with $\lambda$.

\vspace{.7mm}
\noindent
\emph{(d)}
$\Lambda_E=\{\frac{3}{\pi^4j^2k^2}: j,k\in\bN\}$ is the set of eigenvalues of $K(E)$.
For $\lambda\in\Lambda_E$ let $N_{E,\lambda}=\{(j,k)\in \bN\times \bZ_*: \frac{3}{\pi^4j^2k^2}=\lambda\}$,
and $\Phi_{E,\lambda}=\{\Tilde f_j\otimes f_k: (j,k)\in N_{E,\lambda}\}$, where
$\Tilde f_j(u)=\sqrt{2}\cos(\pi j u)$, $u\in [0,1]$, for $j\in\bN$, and
$f_k(u)=\sqrt{2}\cos(2\pi k u)$, $u\in [0,1]$, $f_{-k}(u)=\sqrt{2}\sin(\pi k u)$, $u\in [0,1]$, for $k\in\bN$.
Then $\# N_{E,\lambda}$ is the multiplicity of $\lambda$ and $\Phi_{E,\lambda}$ is the set of orthonormal
eigenfunctions associated with $\lambda$.

\vspace{.7mm}
\noindent
\emph{(e)} For $j\in \bZ_*$, let $\lambda_j=\frac{3}{2 \pi^2 j^2}$,
  $f_j=\varphi_j$ if $j\in\bN$, and $\lambda_j=1/\gamma_{-j}$, $f_j = \chi_{-j}$ if $-j\in \bN$,
  with $\varphi_k$, $\chi_k$, and $\gamma_k$ for $k\in\bN$ as defined in Proposition~\ref{prop:eig_g1} (c).
  Then $\Lambda_F=\{\frac{1}{2} \lambda_j \lambda_k: (j,k)\in\bZ_*^2\}$ is the set of eigenvalues of
  $K(F)$. For $\lambda\in \Lambda_F$ let $N_{F,\lambda}=\{(j,k)\in \bZ_*^2: \frac{1}{2} \lambda_j \lambda_k = \lambda \}$,
  and $\Phi_{F,\lambda}=\{f_j\otimes f_k: (j,k)\in N_{F,\lambda}\}$.
  Then $\# N_{F,\lambda}$ is the multiplicity of $\lambda$ and $\Phi_{F,\lambda}$ is the set of orthonormal
  eigenfunctions associated with $\lambda$.
\end{proposition}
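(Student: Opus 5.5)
The plan is to reduce everything to Proposition~\ref{prop:eig_g1} via the tensor product structure $\bH_2=\bH_1\otimes\bH_1$. Each kernel $h_2^A$, $A\in\{B,C,D,E,F\}$, has the form $c\,g(x_1,x_2)\,\tilde g(y_1,y_2)$ with $g,\tilde g\in\{g_1,g_2,g_3\}$ and $c\in\{1/6,1/2\}$. Consequently $K(A)=c\,(K_g\otimes K_{\tilde g})$, where $K_g,K_{\tilde g}$ are the operators on $\bH_1$ from Proposition~\ref{prop:eig_g1}. This identity is first checked on product functions $f\otimes f'$: by Fubini,
\begin{equation*}
K(A)(f\otimes f')(x,y)=c\,(K_gf)(x)\,(K_{\tilde g}f')(y).
\end{equation*}
It then extends by linearity and continuity, since products span a dense subspace and both sides are bounded operators; see also \cite[Lemma 4.2]{NWD}.

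Next, fix complete orthonormal systems of $\bH_1$ consisting of eigenfunctions of $K_g$ and of $K_{\tilde g}$. Each system is completed by an orthonormal basis of the respective kernel; for $K_1$ this kernel is the constants, and for $K_2$, $K_3$ it is also the constants, which is consistent with the trace formula \eqref{eq:trace}. The products of these basis functions form a complete orthonormal system of $\bH_2$, and each product is an eigenfunction of $K(A)$ with eigenvalue $c\lambda\lambda'$. So $K(A)$ is diagonal in this basis. The nonzero eigenvalues are exactly the products $c\lambda\lambda'$ with $\lambda,\lambda'$ nonzero eigenvalues of the factors. Every product in which a factor lies in a kernel belongs to $\ker K(A)$. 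For any $\lambda\ne0$, the eigenspace is the closed span of those basis products whose eigenvalue equals $\lambda$, because an operator diagonal in a complete orthonormal basis has no other eigenvectors. Hence the multiplicity is the number of index pairs in $N_{A,\lambda}$, and $\Phi_{A,\lambda}$ is an orthonormal basis of the eigenspace.

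It remains to substitute the factor spectra case by case. For (a), $c=1/6$ and both factors are $K_1$, so the eigenvalues are $\frac16\cdot\frac{6}{\pi^2j^2}\cdot\frac{6}{\pi^2k^2}=\frac{6}{\pi^4j^2k^2}$ with eigenfunctions $\sqrt2\cos(\pi j\cdot)\otimes\sqrt2\cos(\pi k\cdot)$. For (b), both factors are $K_2$, giving $\frac16\cdot\frac{3}{\pi^2j^2}\cdot\frac{3}{\pi^2k^2}=\frac{3}{2\pi^4j^2k^2}$. The double multiplicity of each $K_2$-eigenvalue is encoded by letting the indices run over $\bZ_*$, with the sign of the index selecting cosine or sine. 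For (c) and (d), the mixed factors $K_2$ and $K_1$ give $\frac16\cdot\frac{3}{\pi^2 j^2}\cdot\frac{6}{\pi^2k^2}=\frac{3}{\pi^4j^2k^2}$, with indices in $\bZ_*\times\bN$ or $\bN\times\bZ_*$ according to which coordinate carries $g_2$. (In (d) the sine should presumably read $\sqrt2\sin(2\pi k u)$.) For (e), $c=1/2$ and both factors are $K_3$, whose two families of simple eigenvalues are merged into one index set $\bZ_*$ as in the statement.

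I expect no real obstacle, since the substance lies in Proposition~\ref{prop:eig_g1}. Two points need care. The first is justifying that the product basis is complete. This requires each factor's eigenfunctions plus an orthonormal basis of its kernel to be complete in $\bH_1$, which holds by the spectral theorem for compact self-adjoint operators. The second is making sure that coincidences $\lambda_j\lambda_k=\lambda_{j'}\lambda_{k'}$ are exactly what the multiplicities count, which is how $N_{A,\lambda}$ is defined.
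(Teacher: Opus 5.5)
Your proposal is correct and takes the same route as the paper. The paper also obtains (a)--(e) from the factorizations $h_2^A=c\,g(x_1,x_2)\,\tilde g(y_1,y_2)$, the tensor-product identity $K(A)=c\,(K_g\otimes K_{\tilde g})$ on $\bH_2=\bH_1\otimes\bH_1$ (via \cite[Lemma 4.2]{NWD}) and the factor spectra of Proposition~\ref{prop:eig_g1}, and your diagonal-basis argument spells out what the paper leaves implicit. Your correction of the sine in (d) to $\sqrt2\sin(2\pi k u)$ is right, and \emph{set of eigenvalues} must indeed be read as the nonzero ones. Identifying the factor kernels as the constants is true but unnecessary, since any orthonormal basis of those kernels completes the systems.
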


On this basis we now obtain the limit distributions for the statistics of interest.
Again, the case $T^B$ already appears in~\cite[Theorem 4.1]{NWD}. 

\begin{theorem}\label{thm:asdistr}
Let $\eta_{j,k}$, $j,k\in\bZ_*$, be an array of independent standard normal random variables,
and let
\begin{align*}
  Z_B \;&:=\; \sum_{j,k\in\bN}\frac{36}{\pi^4 j^ 2 k^2}\,\bigl (\eta_{j,k}^2 -1\bigr),\hspace*{0.88cm}
          Z_C \;:=\; \sum_{j,k\in\bZ_*}\frac{9}{\pi^4 j^2 k^2}\bigl(\eta_{j,k}^2 -1\bigr),\\
  Z_D \;&:=\; \sum_{j\in\bZ_*,k\in\bN}\frac{18}{\pi^4 j^ 2 k^2}\,\bigl (\eta_{j,k}^2 -1\bigr), \quad 
  Z_E \;:=\; \sum_{j\in\bN,k\in\bZ_*}\frac{18}{\pi^4 j^2 k^2}\bigl(\eta_{j,k}^2 -1\bigr),\\
  Z_F \;&:=\; \sum_{j,k\in\bN}\frac{27}{4\pi^4 j^2 k^2}\,\bigl(\eta_{j,k}^2 -1\bigr)+
  \sum_{j,k\in\bN}\frac{9}{2\pi^2 j^2\gamma_{-k}}\,\bigl(\eta_{j,-k}^2 -1\bigr)\\
   &\hspace{1.2cm}+\sum_{j,k\in\bN}\frac{9}{2\pi^2k^2\gamma_{-j}}\,\bigl(\eta_{-j,k}^2 -1\bigr)
          +\sum_{j,k\in\bN}\frac{3}{\gamma_{-j}\gamma_{-k}}\,\bigl(\eta_{-j,-k}^2 -1\bigr)
\end{align*}
Then in the hypothesis case, as $n\to\infty$, 
\begin{equation*}
n\,T_n^B\todistr Z_B, \  n\,T_n^C\todistr Z_C, \  n\,T_n^D\todistr Z_D, \  n\,T_n^E\todistr Z_E, 
                                       \text{ and }   n\,T_n^F\todistr Z_F.
\end{equation*}
\end{theorem}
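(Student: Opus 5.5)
The plan is to combine the general limit theorem for degenerate $U$-statistics with the spectral information in Proposition~\ref{prop:eigstruct1}. Under $H_0$ each $T_n^A$, $A\in\{B,C,D,E,F\}$, is a $U$-statistic of order $k=4$ with bounded kernel. By~\eqref{eq:deftests} this kernel is $\pm(h^{1_A}-\#A/24)$, so it is centered. As noted above, $h_1^A\equiv 0$, and $h_2^A$ is given by the factorizations in terms of $g_1,g_2,g_3$. Since $h_2^A\not\equiv 0$, we have $\sigma_2^2(h^A)>0$, so the kernel is degenerate of rank two.

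Following the remarks after~\eqref{eq:Ustatconv}, $S_n:=U_{h^A,n}/\binom{4}{2}$ is asymptotically equivalent to $U_{h_2^A,n}$, in the sense that $nS_n-nU_{h_2^A,n}\to 0$ in probability. This follows from the Hoeffding decomposition: the projections of order three and four contribute $O_P(n^{-3/2})$. Hence
\[
nT_n^A=\tbinom{4}{2}\,nS_n=6\,nS_n\todistr 6\,U,
\]
where $U$ has law $\mu(K(A))=\cL\bigl(\sum_{\lambda}\lambda(\xi_\lambda^2-1)\bigr)$. The sum runs over the non-zero eigenvalues of $K(A)$, counted with multiplicity. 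So the limit of $nT_n^A$ is $\sum 6\lambda(\xi^2-1)$. The sign in~\eqref{eq:deftests} for $D,E$ is already built into $h_2^D,h_2^E$ as displayed, because those factorizations have positive eigenvalues.

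What remains is bookkeeping: insert the eigenvalues and multiplicities from Proposition~\ref{prop:eigstruct1}. Index the eigenfunctions by the sets $N_{A,\lambda}$ and attach one independent normal $\eta_{j,k}$ to each eigenfunction.
\begin{itemize}
\item For $B$: $6\cdot 6/(\pi^4j^2k^2)=36/(\pi^4j^2k^2)$ with $j,k\in\bN$, which gives $Z_B$.
\item For $C$: $6\cdot 3/(2\pi^4j^2k^2)=9/(\pi^4j^2k^2)$ over $(j,k)\in\bZ_*^2$, where the sign of $j,k$ labels cosine versus sine. This gives $Z_C$.
\item For $D$ and $E$: $6\cdot 3/(\pi^4j^2k^2)=18/(\pi^4 j^2k^2)$ over $\bZ_*\times\bN$, respectively $\bN\times\bZ_*$.
\item For $F$: the eigenvalues $\tfrac12\lambda_j\lambda_k$ are multiplied by $6$, giving $3\lambda_j\lambda_k$. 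Split $\bZ_*^2$ into the four sign quadrants:
\begin{itemize}
\item $j,k>0$: $3\cdot\bigl(3/(2\pi^2)\bigr)^2/(j^2k^2)=27/(4\pi^4j^2k^2)$;
\item mixed signs: $3\cdot 3/(2\pi^2j^2)\cdot 1/\gamma=9/(2\pi^2j^2\gamma)$;
\item both negative: $3/(\gamma\gamma')$.
\end{itemize}
This reproduces $Z_F$. The indexing $\gamma_{-k}$ in the statement should be read with the convention of part~(e), where $\lambda_j=1/\gamma_{-j}$ for $-j\in\bN$.
\end{itemize}

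I expect the only genuinely delicate point to be justifying~\eqref{eq:Ustatconv} for the operator $K(A)$ on $\bH_2$, i.e.\ that the tensor-product eigenfunctions form a complete system for the range of $K(A)$, so that no eigenvalue is missed. This is exactly what Proposition~\ref{prop:eigstruct1} delivers, via the tensor structure $\bH_2=\bH_1\otimes\bH_1$ and \cite[Lemma 4.2]{NWD}. The series converges because $\sum\lambda<\infty$, which follows from the trace formula~\eqref{eq:trace} applied to each factor. Once that is granted, the theorem reduces to the scaling constant $\binom42=6$ and the relabelling above.
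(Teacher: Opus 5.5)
Your proposal is correct and follows the paper's own route. The paper gives no separate proof; the theorem comes from combining~\eqref{eq:Ustatconv} with the factor $\binom{4}{2}=6$ (from the asymptotic equivalence of $U_{h,n}/\binom{k}{2}$ and $U_{h_2,n}$) and the eigenvalues and multiplicities of Proposition~\ref{prop:eigstruct1}. Your bookkeeping is right in all five cases, including the quadrant split for $F$. The sign for $D$ and $E$ is built in simply because the paper defines $h_2^A$ as the conditional expectation of $T_4^A$ itself, which is consistent with~\eqref{eq:Chancharac}.
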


In contrast to the cases considered so far $K(\DE)$ is not a simple tensor product, and it takes more effort 
to determine its spectral structure.
Recall the definition of the function $h^*$ in the proof of Proposition \ref{prop:eig_g1}~(c).
With  
\begin{equation*}
  g_4(u,v)\,:=\,g_2(u,v) + 6(u - \nf{1}{2})(v-\nf{1}{2})\,=\,g_2(u,v) + 3h^*(u)h^*(v)
\end{equation*}
for $(u,v)\in [0,1]^2$, and $\Tilde h_2^{\DE} := 6  h_2^{\DE}$ we can write
\begin{align*}
  \Tilde h_2^{\DE}\big ((u_1,&v_1),(u_2,v_2)\big )\\ &= g_4(u_1,u_2)g_4(v_1,v_2) -
    36(u_1-\nf{1}{2})(v_1-\nf{1}{2})(u_2-\nf{1}{2})(v_2-\nf{1}{2}) \\
    &=g_4(u_1,u_2)g_4(v_1,v_2) - 9h^*(u_1)h^*(u_2)h^*(v_1)h^*(v_2)
\end{align*}
for $(u_1,v_1), (u_2,v_2)\in [0,1]^2$, which displays $K(\DE)$ as a tensor of rank two.
The spectral decomposition of the Hilbert--Schmidt operator 
$K_4:\bH_1\to \bH_1$ defined by $(K_4 f)(u)=\int_0^1 g_4(u,v) f(v)\, dv$, $ f\in \bH_1$, is easily obtained
by replicating the arguments given for $K_3$; see part (c) of the proof of Proposition \ref{prop:eig_g1}.
We merely state the result.
\begin{proposition}\label{prop:eig_g4}
The operator $K_4$ has the simple eigenvalues 
$\lambda_j=3/(\pi^2j^2)$, $j\in\bN$, with associated orthonormal eigenfunctions
$\varphi_j(u)=\sqrt{2}\cos(2\pi j u)$, $j\in\bN$, and the simple
eigenvalues $\mu_j=1/\gamma_j$, $j\in\bN$, where $\gamma_j\in \left (\pi^2(j-1)^2/3,\pi^2j^2/3\right )$, $j\in\bN$,
are the positive zeros of the function
\begin{equation*}
   \omega(z)\,=\,1\,+\,\sqrt{3z}\cot(\sqrt{3z}),\quad z\in\bR, 
\end{equation*}
with associated orthonormal eigenfunctions
\begin{equation*}
  \chi_j(u)\,=\,c_j\left (\cos\bigl (2\sqrt{3/\mu_j}\,u\bigr )\ + \ \sqrt{\mu_j/3}\,\sin \bigl (2\sqrt{3/\mu_j}\,u\bigr )\right ),\quad 0\le u \le 1,
\end{equation*}
where~ $c_j=\left (1/2 + \mu_j/3\right )^{-1/2}$ ~for $j\in \bN$.
\end{proposition}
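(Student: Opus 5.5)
Write $g_4=g_2+3\,h^*\otimes h^*$, with $h^*(u)=\sqrt2(\tfrac12-u)$, and follow part (c) of the proof of Proposition~\ref{prop:eig_g1} with the weight $\tfrac12$ replaced by $1$ and the rank-one weight $1$ replaced by $3$.

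\emph{The cosine eigenfunctions.} By Proposition~\ref{prop:eig_g1}(b), $K_2$ has eigenvalues $3/(\pi^2j^2)$ on the orthonormal system $\varphi_j=\sqrt2\cos(2\pi j\cdot)$, $\psi_j=\sqrt2\sin(2\pi j\cdot)$. By \eqref{eq:Fourier2}, $h^*$ is orthogonal to every $\varphi_j$ and has sine coefficients $\langle h^*,\psi_k\rangle=1/(\pi k)$. Hence $K_4\varphi_j=K_2\varphi_j=\tfrac{3}{\pi^2j^2}\varphi_j$, which gives the first family of eigenvalues. These are simple, because each $3/(\pi^2j^2)$ has multiplicity two for $K_2$ and, as shown next, the sine partner $\psi_j$ is no longer an eigenfunction of $K_4$. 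The eigenvalue $0$ does not arise in the relevant sense, since $\{\varphi_j,\psi_j\}$ together with constants is complete and $g_4$ integrates to zero in each variable.

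\emph{The second family.} Any further eigenfunction $\chi$ with eigenvalue $\mu\neq0$ lies in the closed span of the $\psi_k$. Write $\chi=\sum_k\alpha_k\psi_k$ and $c=\langle h^*,\chi\rangle$. The eigen-equation becomes
\[
\alpha_k\Bigl(\tfrac{3}{\pi^2k^2}-\mu\Bigr)=-\tfrac{3c}{\pi k}.
\]
The argument of Proposition~\ref{prop:eig_g1}(c) excludes $\mu=3/(\pi^2k_0^2)$: it would force $c=0$, hence $\chi\propto\psi_{k_0}$, hence $c\neq0$, a contradiction. So $\alpha_k=-3c/\bigl(\pi k(\tfrac{3}{\pi^2k^2}-\mu)\bigr)$.

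Substituting into $c=\langle h^*,\chi\rangle$ and writing $z=1/\mu$ gives
\[
1+3z\sum_k\frac{1}{z-\pi^2k^2/3}=0.
\]
With $z=\pi^2w^2/3$ and the partial-fraction identity $\sum_k 1/(w^2-k^2)=(\pi w\cot\pi w-1)/(2w^2)$, this reduces to $1+\pi w\cot(\pi w)=0$, i.e.\ $\omega(z)=1+\sqrt{3z}\cot\sqrt{3z}=0$. Between consecutive poles $z=\pi^2(j-1)^2/3$ and $z=\pi^2j^2/3$ the function $\omega$ is strictly monotone and runs from $+\infty$ to $-\infty$. Hence there is exactly one zero $\gamma_j$ in each interval, and each resulting eigenvalue $\mu_j=1/\gamma_j$ is simple. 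For $j=1$ the interval is $(0,\pi^2/3)$, where $\omega(0^+)=2$.

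\emph{Closed form of $\chi_j$.} Sum the sine series $\sum_k k\sin(2\pi ku)/(k^2-a^2)$, with $a^2=3/(\pi^2\mu_j)$, via Gradshteyn--Ryzhik 1.445.5. This gives a multiple of
\[
\sin\bigl(a\pi(1-2u)\bigr)/\sin(a\pi),
\]
and expanding yields $\cos(2\pi a u)-\cot(\pi a)\sin(2\pi a u)$, with $2\pi a=2\sqrt{3/\mu_j}$. From $\omega(\gamma_j)=0$ we get $\cot(\pi a)=-1/(\pi a)=-\sqrt{\mu_j/3}$, which produces the stated form of $\chi_j$.

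\emph{Normalization.} Finally compute $\int_0^1\bigl(\cos(\beta u)+b\sin(\beta u)\bigr)^2du$ with $\beta=2\sqrt{3/\mu_j}$ and $b=\sqrt{\mu_j/3}$. The result involves $\sin 2\beta$ and $\cos 2\beta$. Reduce these with $2\beta=4x$, $x=\sqrt{3/\mu_j}$, $\cot x=-b$, using the quadruple-angle identities quoted in the earlier proof; the integral should collapse to $1/2+\mu_j/3$. I expect this trigonometric normalization to be the main obstacle, since it is purely computational and error-prone. The spectral part is a direct transcription of the $K_3$ argument.
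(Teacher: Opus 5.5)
Your proposal is correct and follows the paper's route: the paper obtains Proposition~\ref{prop:eig_g4} by replicating the $K_3$ argument of Proposition~\ref{prop:eig_g1}(c) for $g_4=g_2+3h^*\otimes h^*$, and your remaining steps (one zero between consecutive poles, the closed form via the sine series, and the normalization, which does come out to $1/2+\mu_j/3$) all check out. The only blemish is a factor-of-$3$ slip in your displayed secular equation, which should read $1+z\sum_k (z-\pi^2k^2/3)^{-1}=0$ (equivalently $1+3z\sum_k(3z-\pi^2k^2)^{-1}=0$); as written, it would not reduce to $1+\pi w\cot(\pi w)=0$.
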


\begin{remark}\label{rem:special:ident}
  (a) Recalling the definitions and notations in Remark \ref{rem:repr} we have that $K_4$ is the covariance operator
  of the Gaussian process $X^{(\text{\tiny d})}$, with
  $X_t^{\text{\tiny (d)}}:=\sqrt{12}X_t^{\text{\tiny Wa}}\,+\,\sqrt{6}\bigl (t-\frac{1}{2}\bigl ) \xi,~0\le t\le 1.$

  \smallbreak
  (b) As a consequence of Proposition \ref{prop:eig_g4}, some useful identities may be derived.
Exploiting the trace formula
\begin{equation*}
  \frac{3}{2}\,=\,\int_0^1 g_4(u,u)\,du\,=\, \sum_{j=1}^\infty \frac{3}{\pi^2 j^2}\,+\,\sum_{j=1}^\infty \mu_j
  \,=\,\frac{1}{2}\,+\,\sum_{j=1}^\infty \mu_j,
\end{equation*}
the identity $\sum_{j=1}^\infty \mu_j = 1$ is found. As the integral
$\int_0^1\int_0^1 g_4(u,v)^2\,du\,dv = \frac{11}{20}$ 
is equal to the sum of the squared eigenvalues of $K_4$, we have
\begin{equation*}
  \frac{11}{20}\,=\,\sum_{j=1}^\infty \frac{9}{\pi^4 j^4} + \sum_{j=1}^\infty \mu_j^2 \,=\,\frac{1}{10}\,+\,\sum_{j=1}^\infty \mu_j^2,
\end{equation*}  
and thus obtain $\sum_{j=1}^\infty \mu_j^2 = \frac{11}{20}$. Writing
\begin{equation*}
1=\Bigl(\sum_{j=1}^\infty\mu_j\Bigr)^2\ =\ \sum_{j=1}^\infty \mu_j^2 + 2\sum_{1\le j<k<\infty}\mu_j\mu_k
\end{equation*}
we also
get $\sum_{1\le j<k<\infty}\mu_j\mu_k = \frac{9}{40}$.
\brend
\end{remark}

The spectral structure of the Hilbert--Schmidt operator $K_0(\DE):\bH_2\rightarrow \bH_2$ defined by
\begin{equation*}
(K_0(\DE)f)(u_1,v_1)=\int_0^1\int_0^1 g_4(u_1,v_1)g_4(u_2,v_2) f(u_2,v_2) \,du_2dv_2,
\end{equation*}
$(u_1,v_1)\in [0,1]^2$, $f\in \bH_2$, is obtained as a simple consequence of Proposition \ref{prop:eig_g4}.

\begin{corollary}\label{cor:DE}
  For $j\in \bZ_*$, let $\lambda_j=\frac{3}{\pi^2 j^2}$,
  $f_j=\varphi_j$ if $j\in\bN$, and $\lambda_j=\mu_{-j}$, $f_j = \chi_{-j}$ if $-j\in \bN$,
  with $\varphi_k$, $\chi_k$, and $\gamma_k$ for $k\in\bN$ as defined in Proposition~\ref{prop:eig_g4}.
  Then the positive eigenvalues of $K_0(\DE)$ are $\lambda_j\lambda_k$, $(j,k)\in \bZ_*^2$, with 
  associated orthonormal eigenfunctions $f_j\otimes f_k$, $(j,k)\in \bZ_*^2$.
\end{corollary}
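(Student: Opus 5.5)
The kernel of $K_0(\DE)$ is, by its definition, the product $g_4(u_1,u_2)\,g_4(v_1,v_2)$, with the first factor acting in the $u$-coordinate and the second in the $v$-coordinate. (The displayed formula writes $g_4(u_1,v_1)g_4(u_2,v_2)$; I read this as the intended kernel $g_4(u_1,u_2)g_4(v_1,v_2)$ of $\Tilde h_2^{\DE}$.) So $K_0(\DE)=K_4\otimes K_4$ on $\bH_2=\bH_1\otimes\bH_1$. The plan is to apply the tensor product statement for operators already used for Proposition~\ref{prop:eigstruct1}, namely \cite[Lemma 4.2]{NWD}, with both factors equal to $K_4$. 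The spectral data of $K_4$ come from Proposition~\ref{prop:eig_g4}.

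First I would collect the spectrum of $K_4$. Proposition~\ref{prop:eig_g4} gives the eigenpairs $(3/(\pi^2j^2),\varphi_j)$ and $(\mu_j,\chi_j)$, $j\in\bN$. In the indexing of the corollary these are $(\lambda_j,f_j)$, $j\in\bZ_*$. Since $K_4$ is self-adjoint and the two families belong to distinct eigenvalues, or are constructed orthogonal, $\{f_j:\, j\in\bZ_*\}$ is an orthonormal system. $K_4$ annihilates its orthogonal complement, so $K_4f=\sum_{j\in\bZ_*}\lambda_j\langle f,f_j\rangle f_j$. The eigenvalues $\lambda_j$ are positive: this is clear for $3/(\pi^2j^2)$, and $\mu_j=1/\gamma_j>0$ because $\gamma_j>0$. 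One can also see this from the covariance operator interpretation in Remark~\ref{rem:special:ident}(a).

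Next, for elementary tensors $f\otimes g$ the kernel factorization and Fubini give $K_0(\DE)(f\otimes g)=(K_4f)\otimes(K_4g)$. Hence $K_0(\DE)(f_j\otimes f_k)=\lambda_j\lambda_k\, f_j\otimes f_k$. The family $\{f_j\otimes f_k\}$ is orthonormal in $\bH_2$ because $\langle f_j\otimes f_k,f_{j'}\otimes f_{k'}\rangle=\delta_{jj'}\delta_{kk'}$. To show that there are no further eigenvalues, I would extend $\{f_j\}$ to a complete orthonormal system of $\bH_1$ by adding a basis $\{e_i\}$ of $\ker K_4$. Then the products of basis elements form a complete orthonormal system of $\bH_2$, as noted at the end of Section~\ref{subsec:Ustat}. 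Every product involving some $e_i$ is mapped to $0$. So $K_0(\DE)$ is diagonal in this basis with diagonal entries $\lambda_j\lambda_k$ or $0$, and its positive eigenvalues are exactly the $\lambda_j\lambda_k$, $(j,k)\in\bZ_*^2$, with eigenfunctions $f_j\otimes f_k$. Multiplicities are counted through coincidences of the products, as in Proposition~\ref{prop:eigstruct1}.

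The only step needing any care is completeness, i.e.\ ruling out eigenfunctions not of product form. The diagonalization in a product orthonormal basis settles this directly. The rest is bookkeeping with the indexing over $\bZ_*$.
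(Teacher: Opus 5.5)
Your proposal is correct and follows the paper's route. You read the kernel as $g_4(u_1,u_2)\,g_4(v_1,v_2)$, and rightly so: this is the first term of $\Tilde h_2^{\DE}$, whereas the displayed $g_4(u_1,v_1)g_4(u_2,v_2)$ would give a rank-one operator. With that reading $K_0(\DE)=K_4\otimes K_4$, and the result follows from Proposition~\ref{prop:eig_g4} by the same tensor-product argument used for Proposition~\ref{prop:eigstruct1}. Your diagonalization in the product basis (after adjoining an orthonormal basis of $\ker K_4$, here just the constants) makes explicit the completeness step that the paper leaves implicit.
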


Next we aim to derive the spectral decomposition of the Hilbert--Schmidt operator $\Tilde K(\DE)$
with respective kernel $\Tilde h_2^{\DE}$. We adopt the notations of the preceding corollary.

First, observe that from $\int_0^1 (u-\nf{1}{2})f_j(u)\,du\,=\,0$ for $j\in\bN$ it follows that
the products $\lambda_j\lambda_k$, $(j,k)\in \bN^2\cup (\bN\times (-\bN)) \cup ((-\bN)\times \bN)$, are positive
eigenvalues of $\Tilde K(\DE)$, with associated orthonormal eigenfunctions
$f_j\otimes f_k$,  $(j,k)\in \bN^2\cup (\bN\times (-\bN)) \cup ((-\bN)\times \bN)$. 

Second, let $\Lambda:=\Lambda_=\cup\Lambda_<$, with $\Lambda_==\{\mu_j^2:j\in\bN\}$ and
$\Lambda_<=\{\mu_j\mu_k:1\le j<k<\infty\}$, be the set of eigenvalues associated with the orthonormal
eigenfunctions $\chi_j\otimes \chi_k$, $j,k\ge 1$, of the operator $K_0(\DE)$.
Denote by $L_\lambda$ the linear subspace of functions of the form 
\begin{equation}\label{eq:eigenDE1}
  f\,=\,\sum_{(\ell,m)\in \bN^2}\alpha_{\ell,m}\chi_\ell\otimes \chi_m\ \in \ \bH_2,
\end{equation}
that are eigenfunctions of $\Tilde K(\DE)$ associated with the positive eigenvalue $\lambda$
of $\Tilde K(\DE)$. We need to consider the
cases $\lambda\in\Lambda$ and $\lambda\notin \Lambda$. Let
\begin{equation}\label{eq:defb}
  b_j\,:=\,\int_0^1(u-\nf{1}{2})\chi_j(u)\,du,\quad j\ge 1.
\end{equation}  
It is important that the $b_j$ are non-zero; in fact, we have the following result.

\begin{lemma}\label{lem:nonzero:b} Let $c_j$ be defined as in Proposition \ref{prop:eig_g4}. Then, for $j\ge 1$,
\begin{equation*}
   b_j\,=\,-\frac{c_j}{3} \mu_j\,<\,0.
\end{equation*}
\end{lemma}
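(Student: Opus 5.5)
The cleanest route avoids integrating the explicit trigonometric formula for $\chi_j$. Instead I would use the eigenvalue equation for $K_4$ together with the structure of $g_4$.

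Write $g_4(u,v)=g_2(u,v)+3h^*(u)h^*(v)$ with $h^*(u)=\sqrt2(\tfrac12-u)$. Then
\begin{equation*}
b_j=\int_0^1(u-\nf{1}{2})\chi_j(u)\,du=-\tfrac{1}{\sqrt2}\langle h^*,\chi_j\rangle .
\end{equation*}
Two facts from the proof of Proposition~\ref{prop:eig_g1}(b),(c) are needed: $K_2 h^*=\sum_k \tfrac{3}{\pi^2k^2}\tfrac{1}{\pi k}\psi_k$, and $\chi_j$ lies in the span of the $\psi_k$. Taking the inner product of $K_4\chi_j=\mu_j\chi_j$ with $h^*$ expresses $\langle h^*,\chi_j\rangle$ through $\langle K_2h^*,\chi_j\rangle$. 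This is not yet closed form, so I would prefer a boundary-value argument.

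Differentiate the eigen-equation $\int_0^1 g_4(u,v)\chi_j(v)\,dv=\mu_j\chi_j(u)$ directly. Using
\begin{equation*}
g_4(u,v)=6u^2+6v^2-12uv+6u\wedge v-6u\vee v+1+6(u-\tfrac12)(v-\tfrac12),
\end{equation*}
the first derivative in $u$ of the left side is
\begin{equation*}
\int_0^1\bigl(12u-12v+6\cdot 1_{\{u<v\}}-6\cdot 1_{\{u>v\}}+6(v-\tfrac12)\bigr)\chi_j(v)\,dv .
\end{equation*}
Evaluating at $u=0$ gives
\begin{equation*}
\mu_j\chi_j'(0)=\int_0^1\bigl(-12v+6+6v-3\bigr)\chi_j=\int_0^1(3-6v)\chi_j=-6b_j .
\end{equation*}
(This uses $\int_0^1\chi_j=0$, which follows from orthogonality to the constant.) Hence $b_j=-\mu_j\chi_j'(0)/6$.

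Next compute $\chi_j'(0)$ from the formula in Proposition~\ref{prop:eig_g4}. The cosine term contributes nothing at $0$, and the sine term gives
\begin{equation*}
\chi_j'(0)=c_j\sqrt{\mu_j/3}\cdot 2\sqrt{3/\mu_j}=2c_j .
\end{equation*}
Thus $b_j=-\mu_j c_j/3$, and this is negative because $c_j,\mu_j>0$.

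The main obstacle is justifying the differentiation under the integral and the identity $\int\chi_j=0$. The kernel has a kink only on the diagonal, so the derivative formula holds for every $u$ by splitting the integral at $v=u$, and $\chi_j$ is continuous. For $\int_0^1 g_4(u,v)\,du=0$, I would check it directly from the formula for $g_4$, or recall degeneracy $h_1\equiv0$. It then follows that $\mu_j\int\chi_j=\int\!\!\int g_4\chi_j=0$. A sign slip in the derivative of $u\wedge v-u\vee v$ is the likeliest error, so I would double-check it against a direct integration for $j=1$ numerically.
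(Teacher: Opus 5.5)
Your proof is correct, but it takes a different route from the paper's. The paper integrates $(u-\frac{1}{2})\chi_j(u)$ directly from the explicit trigonometric form of $\chi_j$. This gives an expression in $\sin\bigl(2\sqrt{3/\mu_j}\bigr)$ and powers of $\sqrt{3/\mu_j}$, which the paper then collapses using the characteristic equation $\cot\sqrt{3/\mu_j}=-\sqrt{\mu_j/3}$ (i.e.\ $\omega(1/\mu_j)=0$) and the identity $\sin 2x=2\cot x/(1+\cot^2 x)$.

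You instead differentiate the integral equation $K_4\chi_j=\mu_j\chi_j$ at the endpoint $u=0$. There $\partial_u g_4(0,v)=3-6v=-6(v-\frac{1}{2})$ is exactly the weight that defines $b_j$. This gives the general identity $b_j=-\mu_j\chi_j'(0)/6$, and only $\chi_j'(0)=2c_j$ has to be read off from the formula. No trigonometric identities are needed, and the zero condition is never used explicitly, since it is already encoded in $\chi_j$ being an eigenfunction.

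Your route is shorter and explains why $b_j$ has such a simple form. It also shows at once why $\int_0^1(u-\frac{1}{2})\varphi_j(u)\,du=0$ for the cosine eigenfunctions, since $\varphi_j'(0)=0$. The paper's computation, by contrast, uses only the closed form of $\chi_j$ and the characteristic equation. Yours additionally relies on that closed form actually solving the integral equation, which Proposition~\ref{prop:eig_g4} asserts but the paper states without proof. It does hold: one can check $\int_0^1\chi_j=0$, the ODE $\mu_j\chi_j''=-12\chi_j$, and the two initial conditions at $u=0$.

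One small remark: you do not need $\int_0^1\chi_j=0$ at $u=0$. The term $12u\int_0^1\chi_j$ vanishes there, and $3-6v=-6(v-\frac{1}{2})$ yields $-6b_j$ directly.
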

\begin{proof} Evaluating the integral in~\eqref{eq:defb} leads to
  \begin{equation*}
  b_j\,=\,\frac{c_j}{4} \left [\sin \bigl (2\,(3/\mu_j)^{\nf{1}{2}}\bigr )\Bigl ((3/\mu_j)^{-\nf{1}{2}}\,+\,
    (3/\mu_j)^{-\nf{3}{2}}\Bigr )\,-\,2\,(3/\mu_j)^{-1} \right ].
  \end{equation*}
 From this the assertion follows with~$\,\cot\bigl ((3/\mu_j)^{-\nf{1}{2}}\bigr ) = -(3/\mu_j)^{-\nf{1}{2}}$  and
  \begin{equation*}
    \sin \bigl (2\,(3/\mu_j)^{\nf{1}{2}}\bigr )\,=\,
    \frac{2 \cot\bigl ((3/\mu_j)^{-\nf{1}{2}}\bigr )}{1\,+\,\cot^2\bigl ((3/\mu_j)^{-\nf{1}{2}}\bigr )}
    \,=\,-2\,\frac{(3/\mu_j)^{-\nf{1}{2}}}{1\,+\,(3/\mu_j)^{-1}}.\qedhere
  \end{equation*}
\end{proof}  

Suppose now that $\lambda \in \Lambda$ and let $a_{j,k}$ be as in~\eqref{eq:eigenDE1}.
Define  $N_\lambda :=\{(j,k)\in\bN^2: \mu_j\mu_k=\lambda\}$ and
$N_{<,\lambda} :=\{(j,k)\in N_\lambda: j<k\}$. Then from $f\in L_\lambda$ and Lemma \ref{lem:nonzero:b}
it follows that $c:=\int_0^1\int_0^1 (u-\nf{1}{2})(v-\nf{1}{2})f(u,v)\,du dv\;=0$,
and $a_{j,k}=0$ for all $(j,k)\notin N_\lambda$, so that
\begin{equation*}
  f= \sum_{(j,k)\in N_\lambda} a_{j,k}\chi_j\otimes\chi_k\quad\text{and}\quad
  \sum_{(j,k)\in N_\lambda} a_{j,k}b_jb_k\,=\,0.
\end{equation*}
If $\lambda\in\Lambda_=\cap \Lambda_<^c$, then we would have $\#N_\lambda=1$, and $f=0$.
If $\lambda\in\Lambda_<$, then noticing the expressions for $c_j$ and $b_j$ in
Proposition \ref{prop:eig_g4} and Lemma \ref{lem:nonzero:b}, 
we have $b_jb_k=\frac{1}{9}(1/2 + \lambda/3)\lambda^2$ for $(j,k)\in N_\lambda$,
and deduce from this that $L_\lambda$ is the linear subspace  
of functions of the form $\sum_{(j,k)\in \bN_\lambda} a_{j,k}\chi_j\otimes \chi_k$, with coefficients
satisfying $\sum_{(j,k)\in N_\lambda} a_{j,k}=0$. Thus, $L_\lambda$ has dimension~ $d_\lambda=\# N_\lambda - 1$.
An orthonormal basis of $L_\lambda$ is easily obtained as follows. Pick an arbitrary pair $(j_0,k_0)\in N_\lambda$.
The set of functions $\chi_j\otimes \chi_k - \chi_{j_0}\otimes \chi_{k_0}$, $(j,k)\in N_\lambda\setminus \{(j_0,k_0)\}$,
is a basis of $L_\lambda$. Use the Gram--Schmidt orthonormalization process to get an orthonormal basis of $L_\lambda$.  

\smallbreak
Next let $\lambda \notin \Lambda$. Then from $f\in L_\lambda$
it follows that
\begin{align*}
  \sum_{(\ell,m)\in  \bN^2}a_{\ell,m}\,\mu_\ell\mu_m\,\chi_\ell(u)\chi_m(v)\; - \; &36\,c\,(u-\nf{1}{2})(v-\nf{1}{2})\\
   &= \quad \lambda  \sum_{(\ell,m)\in  \bN^2}a_{\ell,m}\chi_\ell(u)\chi_m(v), 
\end{align*}
with $c=\int_0^1\int_0^1 (u-\nf{1}{2})(v-\nf{1}{2})f(u,v)\,du dv \neq 0$, where the identity refers to $\bH_2$. Hence,
$a_{\ell,m}\,\mu_\ell\mu_m\,-\,\lambda\,=\,36\,c\,b_\ell b_m$, i.e.
\begin{equation*} 
a_{\ell,m}\,=\,\frac{36\,c\,b_\ell b_m}{\mu_\ell\mu_m \,-\,\lambda}\quad\text{for all }\,(\ell,m)\in\bN^2.
\end{equation*}  
Using this we get from the definition of $c$ that
$1\,=\,\sum_{\ell,m)\in\bN^2}\frac{36\,b_\ell^2 b_m^2}{\mu_\ell\mu_m \,-\,\lambda}$. Thus, with
\begin{equation*}
  \omega_{\DE}(z)\,:=\,1\,+\,36\,z \sum_{(\ell,m)\in\bN^2}\frac{b_\ell^2b_m^2}{1-\mu_\ell\mu_m\,z},\quad z\in \bR,
\end{equation*}
we find that $z=1/\lambda$ is a positive zero of $\omega_{\DE}$, and 
$f\,=\,\sum_{(\ell,m)\in\bN^2}\frac{b_\ell b_m}{\mu_\ell\mu_m - \lambda}\,\chi_\ell\otimes \chi_m$.
In the other direction, it is easily seen that a function $f$ associated with a zero of $\omega_{\DE}$ in this way is
an eigenfunction of $\Tilde K(\DE)$. For a transparent statement of the result     
let $z_1^*< z_2^*<\dots$ be the reciprocal elements $\lambda_i^*$, $i\in\bN$, of $\Lambda$ arranged in increasing order.
Putting
\begin{align*}
  b_i^*\ :&=\ \sum_{(\ell,m)\in\bN^2,\,\mu_\ell \mu_m=\lambda_i^*} b_\ell^2 b_m^2\mu_\ell\mu_m\\
  &=\ \frac{1}{81}\sum_{(\ell,m)\in\bN^2,\,\mu_\ell \mu_m=\lambda_i^*} \frac{\mu_\ell\mu_m}{(1/2 + \mu_\ell/3)(1/2 + \mu_m/3)}
\end{align*}  
for $i\in\bN$, we can write  
\begin{equation}\label{eq:omegaDE}
  \omega_{\DE}(z)\,=\,1\,+\,36\,z \sum_{i\in\bN}\frac{b_i^*}{z_i^*-z},\quad z\in \bR.
\end{equation}
Due to $\frac{d}{dz} \omega_{\DE}(z)=36\ \sum_{i\in\bN}\frac{b_i^*z_i^*}{(z_i^*-z)^2}>0$, $z\in \bR$, $z\neq z_i^*$ for
$i\in \bN$, it follows that $\omega_{\DE}$ is positive and strictly increasing in the interval $(0,z_1^*)$,
continuous, strictly increasing in $(z_i^*,z_{i+1}^*)$, with $\lim_{z\downarrow z_i^*}\omega_{\DE}(z)=-\infty$,
$\lim_{z\uparrow z_{i+1}^*}\omega_{\DE}(z)=\infty$, for $i\in\bN$, and therefore has the pairwise distinct zeros
$\Hat z_i\in (z_i^*,z_{i+1}^*)$, $i\in\bN$. Putting 
\begin{equation*}
  \Hat \lambda_i=1/{\Hat z_i},\quad\text{and}\quad \Hat f_i\,=\,c_i\sum_{(\ell,m)\in\bN^2}\frac{b_\ell b_m}{\mu_\ell\mu_m - \Hat \lambda_i}\,\chi_\ell\otimes \chi_m\quad\text{for }\,i\in\bN, 
\end{equation*}
with normalizing constants $c_i\neq 0$ such that $\|\Hat f_i\|=1$, we have that
the $\Hat f_i$, $i\in\bN$, are the orthonormal eigenfunctions of $\Tilde K(\DE)$ of the form \eqref{eq:eigenDE1}, associated with the pairwise distinct eigenvalues $\Hat \lambda_i$, $i\in \bN$.   

If $\lambda\in \Lambda_<$ is an eigenvalue with the property that $\# N_\lambda =2$, i.e. $d_\lambda=1$,
we have a unique pair $(j,k)\in N_{<,\lambda}$ such that $\mu_j\mu_k=\lambda$. Then~
$\tau_{\lambda,1}\,=\,\pm 2^{-1/2}(\chi_j\otimes \chi_k\, -\, \chi_k\otimes \chi_j)$ ~are the only
normalized eigenfunctions of the form \eqref{eq:eigenDE1} associated with the eigenvalue
$\lambda$ of $\Tilde K(\DE)$.
In any case, the $\mu_j\mu_k$, $1\le j<k<\infty$, are eigenvalues of $\Tilde K(\DE)$, with associated
eigenfunctions of the form \eqref{eq:eigenDE1}. Exploiting the trace formula for $\Tilde K(\DE)$ and using
the identities given in Remark \ref{rem:special:ident} we obtain
\begin{align*}
2\,=\,\int_0^1\int_0^1 \tilde g_2^{DE}\left ((u,v),(u,v)\right )\,du\,dv &\,=\,\sum_{j,k=1}^\infty\frac{9}{\pi^4 j^2k^2}
                                                                             \,+\,2\sum_{j,k=1}^\infty\frac{3}{\pi^2 j^2}\,\mu_k\\
  &\hspace{4mm}\,+\,\sum_{\lambda\in\Lambda_<} d_\lambda \lambda\,+\,\sum_{i\in \bN} \Hat \lambda_i,
\end{align*}
i.e. $s:=\sum_{i\in \bN} \Hat \lambda_i\,=\,3/4\,-\,\sum_{\lambda\in\Lambda_<} d_\lambda \lambda$. Thus, 
if it is true that the products $\mu_j\mu_k$, $1\le j\le k<\infty$, are pairwise different,
then $s\,=\,3/4\,+\,\sum_{1\le j<k<\infty}\mu_j\mu_k\,=\,21/40\,=\,0.525$. 

Some numerical work may be done to get for suitably chosen $m\in \bN$ approximations of the $m$ largest
eigenvalues $\Hat \lambda_1>\ldots>\Hat \lambda_m$. Choosing some integer $m_1$ such that $m_1(m_1+1)/2>m$, and taking
the function $\bar \omega_{\DE}(z):=1+36\,z \sum_{j,k=1}^{m_1}\frac{b_\ell^2b_m^2}{1\,-\,\mu_\ell\mu_m z},~z\in \bR$,
as approximation of $\omega_{\DE}$, one may regard the reciprocals
of the  $m$ smallest zeros $0<\bar z_1<\ldots<\bar z_m$ of $\bar \omega_{DE}$ 
as approximations of the eigenvalues $\Hat \lambda_i$, $i=1,\ldots,m$. Proceeding in this way, with
$m=30000$ and $m_1=5000$ we get pairwise distinct numerical values of the $m_1(m_1+1)/2$    
products $\mu_j\mu_k$, $1\le j\le k\le m_1$, with the property that the distance between any two
different values is greater than 0.149, and arrive at
$\bar s=\sum_{i=1}^m \frac{1}{\bar z_i} = 0.5248875879199677$ as approximation for $s$.

Proposition \ref{prop:DE} summarizes the main result obtained for the Hilbert--Schmidt operator $K(\DE)$.   

\begin{proposition}\label{prop:DE}
  For $\lambda\in \Lambda_<$, let $\tau_{\lambda,1},\ldots,\tau_{\lambda,d_\lambda}$ be an orthonormal basis of $L_\lambda$.
  Then, a complete system of orthonormal eigenfunctions of
  $K(\DE)$, with associated positive eigenvalues,
  is given by 
  \begin{equation}\label{eq:EW:DE}
    \begin{aligned}
  &\varphi_j\otimes \varphi_k,~~\text{with}\hspace{2.5mm}\frac{3}{2\pi^4j^2k^2},~(j,k)\in\bN^2,\\
  &\varphi_j\otimes \chi_k,~~\text{with}\hspace{2.5mm}\frac{1}{2\pi^2j^2}\mu_k,~(j,k)\in\bN^2,\\
  &\chi_j\otimes \varphi_k,~~\text{with}\hspace{2.5mm}\frac{1}{2\pi^2k^2}\mu_j,~(j,k)\in\bN^2,\\
  &\tau_{\lambda,j},~~\text{with}\hspace{2.5mm}\frac{1}{6}\lambda,~j=1,\ldots,d_\lambda,\,\lambda\in \Lambda_<,\\
  &\Hat f_i,~~\text{with}\hspace{2.5mm}\frac{1}{6}\Hat \lambda_i,~i\in\bN.
    \end{aligned}
\end{equation}
\end{proposition}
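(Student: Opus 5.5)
The plan is to work with $\Tilde K(\DE)$, the operator with kernel $\Tilde h_2^{\DE}=6h_2^{\DE}$, and divide every eigenvalue by $6$ at the end. The kernel is $g_4\otimes g_4-9\,(h^*\otimes h^*)\otimes(h^*\otimes h^*)$. The perturbation is a rank-one operator in the direction $e:=(u-\nf12)(v-\nf12)$. By Corollary~\ref{cor:DE}, $K_0(\DE)$ has the complete orthonormal system $f_j\otimes f_k$, $(j,k)\in\bZ_*^2$, with eigenvalues $\lambda_j\lambda_k$. Every $f\in\bH_2$ with $\Tilde K(\DE)f\neq 0$ can be expanded in this system. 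It then suffices to show three things: the listed functions are eigenfunctions, they are orthonormal, and their closed span contains the range of $\Tilde K(\DE)$. For the last point, the orthogonal complement of the listed functions should lie in the kernel.

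\emph{Step 1 (splitting).} Write $\bH_2=V_1\oplus V_2$. Here $V_1$ is the closed span of $f_j\otimes f_k$ with at least one index in $\bN$, i.e.\ at least one cosine factor $\varphi$. $V_2$ is the closed span of $\chi_\ell\otimes\chi_m$, $\ell,m\in\bN$. Since $\int_0^1(u-\nf12)\varphi_j(u)\,du=0$, we have $e\perp V_1$, so $e\in V_2$. Both $K_0(\DE)$ and the rank-one part $-36\langle\cdot,e\rangle e$ leave $V_1$ and $V_2$ invariant, and hence so does $\Tilde K(\DE)$. On $V_1$ the perturbation vanishes, and $\Tilde K(\DE)=K_0(\DE)$ there. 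This gives the first three families in \eqref{eq:EW:DE}, with eigenvalues $\frac{9}{\pi^4j^2k^2}$, $\frac{3}{\pi^2j^2}\mu_k$ and $\frac{3}{\pi^2k^2}\mu_j$, which become the stated values after division by $6$. The kernel of $K_0$ restricted to the orthogonal complement of all $f_j\otimes f_k$ is also killed by the perturbation, since $e$ lies in that span.

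\emph{Step 2 (the block $V_2$).} On $V_2$, $\Tilde K(\DE)=D-36\,e\otimes e$, where $D$ is diagonal with entries $\mu_\ell\mu_m$ and $e=\sum_{\ell,m}b_\ell b_m\,\chi_\ell\otimes\chi_m$. By Lemma~\ref{lem:nonzero:b}, every coordinate of $e$ is nonzero. This is the standard rank-one-update setting, and the analysis before the proposition applies.

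For $\lambda\in\Lambda$, the eigenvectors are exactly the vectors in the $D$-eigenspace $\mathrm{span}\{\chi_j\otimes\chi_k:(j,k)\in N_\lambda\}$ that are orthogonal to $e$. Because $b_jb_k$ is constant on $N_\lambda$, these are the spaces $L_\lambda$, of dimension $\#N_\lambda-1$. This dimension is $d_\lambda$ for $\lambda\in\Lambda_<$. For $\lambda\in\Lambda_=\setminus\Lambda_<$ it is zero, since the symmetry $(j,k)\leftrightarrow(k,j)$ makes $\#N_\lambda$ even unless it equals $1$. This symmetry argument needs care in the mixed case.

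For $\lambda\notin\Lambda$, the eigenvalues are the reciprocals of the zeros $\Hat z_i$ of $\omega_{\DE}$, one in each interval $(z_i^*,z_{i+1}^*)$, with eigenfunctions $\Hat f_i$. Orthogonality follows from self-adjointness and distinctness of the eigenvalues, with normalisation by $c_i$.

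\emph{Step 3 (completeness within $V_2$).} This is the main obstacle. I would use the interlacing structure of a rank-one perturbation. Let $P$ be the orthogonal projection onto $M:=\overline{\mathrm{span}}\{P_\lambda e:\lambda\in\Lambda\}$, where $P_\lambda$ is the spectral projection of $D$. Then $M^\perp=\bigoplus_{\lambda}L_\lambda$ together with $\ker D\cap V_2$, and on $M^\perp$ we have $\Tilde K(\DE)=D$. On $M$ the operator is a compact self-adjoint rank-one perturbation of a diagonal operator with \emph{simple} eigenvalues $\lambda_i^*$ and cyclic vector $e$.

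For such an operator, any eigenvector $f\in M$ must satisfy $\langle f,e\rangle\neq 0$. Otherwise $f$ would be an eigenvector of the simple-spectrum diagonal part orthogonal to the cyclic vector, which is impossible. Hence $f$ is one of the $\Hat f_i$. The spectral theorem for the compact self-adjoint operator on $M$ then shows that the $\Hat f_i$ span the range within $M$. One must also rule out a nonzero eigenvector on $M$ with eigenvalue $0$ being missed; this does not matter, since only positive eigenvalues are claimed.

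Finally, positivity of all $\Hat\lambda_i$ follows from $\Hat z_i>z_i^*>0$. Combining $V_1$, $M^\perp$ and $M$ and dividing by $6$ gives \eqref{eq:EW:DE}.
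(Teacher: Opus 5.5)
Your proposal is correct and follows the paper's route. You treat $\Tilde K(\DE)$ as $K_0(\DE)$ minus the rank-one term $36\langle\cdot,e\rangle e$, which vanishes on the span of the $f_j\otimes f_k$ with a cosine factor, and on the $\chi_\ell\otimes\chi_m$ block you separate $\lambda\in\Lambda$ (eigenspace $L_\lambda$) from $\lambda\notin\Lambda$ (zeros of $\omega_{\DE}$). You also make explicit the invariance of the splitting and the spectral-theorem completeness step, which the paper leaves implicit.

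Two small points. First, your step ``hence $f$ is one of the $\Hat f_i$'' also needs negative eigenvalues (negative zeros of $\omega_{\DE}$) to be excluded. This follows at once from $\Tilde K(\DE)=K_2\otimes K_1+K_1\otimes K_2$ with $K_1,K_2\ge 0$. Second, the dimension $\#N_\lambda-1$ of $L_\lambda$ needs only $P_\lambda e\neq 0$. The claimed constancy of $b_jb_k$ on $N_\lambda$, which the paper also asserts, holds only when $N_\lambda$ contains a single unordered pair, so do not rely on it.
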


The limiting null distribution of the scaled statistics~$n T_n^{\DE}$, $n\in\bN$, is now obtained as a simple 
consequence of Proposition \ref{prop:DE}. 

\begin{theorem}\label{thm:asdistrDE}
  Let $\eta_{i,j,k}$, $i \in \{1,2,3\}$, $j,k\in\bN$, $\zeta_{\lambda,i}$, $i\in \{1,\ldots,d_\lambda\}$,
  $\lambda\in \Lambda_<$, and   $\xi_i$, $i\in\bN$, be independent arrays of independent standard normal random variables,
and let
\begin{align*}
   Z_{\DE} \; :=\; &\sum_{j,k\in\bN}\frac{9}{\pi^4 j^ 2 k^2}\,\bigl (\eta_{1,j,k}^2 -1\bigr)\,+\,
              \sum_{j,k\in\bN}\frac{3}{\pi^2j^2}\,\mu_k\,\bigl(\eta_{2,j,k}^2 -1\bigr)\\
              &\,+\,\sum_{j,k\in\bN}\frac{3}{\pi^2k^2}\,\mu_j\,\bigl(\eta_{3,j,k}^2 -1\bigr)
            +\sum_{\lambda\in \Lambda_<}\lambda\, \sum_{i=1}^{d_\lambda}\,\bigl (\zeta_{\lambda,i}^2 -1 \bigr )\,+\,
              \sum_{i\in\bN} \Hat \lambda_i\,\bigl (\xi_i^2 -1\bigr )
\end{align*}
Then in the hypothesis case, as $n\to\infty$, 
\begin{equation*}
n\,T_n^{\DE}\todistr Z_{\DE}.
\end{equation*}
\end{theorem}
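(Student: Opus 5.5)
The proof will follow the same route as Theorem~\ref{thm:asdistr}: combine the general limit theorem \eqref{eq:Ustatconv} for degenerate $U$-statistics of rank two with the spectral decomposition of $K(\DE)$ given in Proposition~\ref{prop:DE}.

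First, $T_n^{\DE}=T_n^D+T_n^E$ is a $U$-statistic of order four with bounded kernel $h^{\DE}=h^D+h^E$, suitably centered and signed as in \eqref{eq:deftests}. Since $h_1^D\equiv h_1^E\equiv 0$ under $H_0$, the statistic is degenerate, and its second-order projection is $h_2^{\DE}=h_2^D+h_2^E$. By the remarks following \eqref{eq:Ustatconv}, $nT_n^{\DE}-\binom{4}{2}\,nU_{h_2^{\DE},n}\to 0$ in probability. Hence $nT_n^{\DE}$ converges in distribution to $6\sum_j \lambda_j(\xi_j^2-1)$, where the $\lambda_j$ are the non-zero eigenvalues of $K(\DE)$, repeated according to multiplicity.

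Second, I insert the eigenvalues from Proposition~\ref{prop:DE}. I need to check that the listed eigenfunctions form a complete orthonormal system for the range of $K(\DE)$. I also need to check that there are no negative eigenvalues: since $\tilde h_2^{\DE}=6h_2^{\DE}$ is a sum of tensor products of positive kernels $g_1,g_2$, the operator is positive. Multiplying each eigenvalue by the factor $6$ gives the five groups of terms.
\begin{itemize}
\item $6\cdot \frac{3}{2\pi^4j^2k^2}=\frac{9}{\pi^4j^2k^2}$.
\item $6\cdot\frac{1}{2\pi^2j^2}\mu_k=\frac{3}{\pi^2j^2}\mu_k$, and symmetrically $\frac{3}{\pi^2k^2}\mu_j$.
\item $6\cdot\frac16\lambda=\lambda$, with multiplicity $d_\lambda$ for each $\lambda\in\Lambda_<$.
\item $6\cdot\frac16\hat\lambda_i=\hat\lambda_i$.
\end{itemize}
I assign independent standard normals to each orthonormal eigenfunction, labelled as in the statement. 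Because the eigenvalues are summable (trace class, via Mercer and the trace computation preceding the proposition), the series defining $Z_{\DE}$ converges almost surely and in $L^2$. This yields $Z_{\DE}$.

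The main obstacle is the completeness of the eigensystem in Proposition~\ref{prop:DE}. I would argue it as follows. The system $\{f_j\otimes f_k\}$ from Corollary~\ref{cor:DE}, together with the kernel of $K_4$, spans $\bH_2$. The rank-one perturbation $-9\,h^*\otimes h^*\otimes h^*\otimes h^*$ only acts on $\overline{\mathrm{span}}\{\chi_\ell\otimes\chi_m\}$, because $h^*$ is orthogonal to the $\varphi_j$. On that subspace, the rank-one perturbation theory carried out above gives an eigenbasis $L_\lambda$, $\hat f_i$ that is complete, with one caveat: for $\lambda\in\Lambda_=$ whose $N_\lambda$ is a singleton, the eigenvalue disappears, and its direction is absorbed into the $\hat f_i$. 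As a consistency check, the trace identity $s=\frac34-\sum_{\Lambda_<}d_\lambda\lambda$ confirms that no spectral mass is missing.
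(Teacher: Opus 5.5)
Your proposal is correct and is essentially the paper's argument: the paper obtains the theorem directly from Proposition~\ref{prop:DE} via \eqref{eq:Ustatconv}, with the factor $\binom{4}{2}=6$ turning the eigenvalues of $K(\DE)$ into those of $\Tilde K(\DE)$, exactly as in your list. The completeness you worry about is already asserted in Proposition~\ref{prop:DE}, and your rank-one perturbation sketch matches the analysis preceding it; note, however, that the trace identity for $s$ is derived by assuming completeness, so it is not an independent check (only the numerical comparison $\bar s\approx 0.5249$ plays that role).
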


If it is true that the products $\mu_j\mu_k$, $1\le j\le k<\infty$, are pairwise different, then the term in the
fourth line of \eqref{eq:EW:DE} would read 
\begin{equation*}\label{eq:special:expression}
    2^{-1/2}(\chi_j\otimes \chi_k\, -\, \chi_k\otimes \chi_j)~~\text{with}\hspace{2.5mm}\frac{1}{6}\mu_j\mu_k,\,1\le j<k<\infty,
\end{equation*}  
and the fourth summand of $Z_{\DE}$ could be replaced by~ $\sum_{1\le j<k<\infty}\mu_j\mu_k \bigl (\Tilde \zeta_{j,k}^2 -1 \bigr )$, ~with $\Tilde \zeta_{j,k}$, $1\le j<k<\infty$, an array of independent standard normal variables that is
independent of $(\eta,\xi)$, where  $\eta$ and $\xi$ stand for the arrays of $\eta$ and $\xi$ variables
stated in the theorem. 

It is easy to see that the limiting distribution functions appearing in Theorem \ref{thm:asdistr}
and Theorem \ref{thm:asdistrDE} are continuous and strictly increasing.
In particular, for a given significance level $\alpha\in (0,1)$, the tests defined at the beginning
of this section are asymptotically equivalent to the tests that reject the hypothesis $H_0$ if $nT^A_n$ exceeds
the upper $\alpha$-quantile of the corresponding limit distribution; also, they are consistent. 

\begin{remark}\label{rem:repr2}
Partitions of the index set $\bZ_*\times \bZ_*$ in Theorem~\ref{thm:asdistr}
can be used to obtain representations  of the $Z$-variables as sums of independent random
variables. A decomposition into quadrants, and
\begin{align*}
   Y\ &:= \ \sum_{j,k\in\bN}\frac{9}{\pi^4 j^2 k^2}\,\bigl (\eta_{1,j,k}^2 -1\bigr), \\
   V\ &:=\  \sum_{j,k\in\bN}\frac{9}{2\pi^2 j^ 2 \gamma_k}\,\bigl (\eta_{2,j,k}^2 -1\bigr), \\ 
   W\ &:=\ \sum_{j,k\in\bN}\frac{3}{\gamma_j\gamma_k}\,\bigl (\eta_{3,j,k}^2 -1\bigr),
\end{align*}
with independent standard normal $\eta$-variables, lead to   $Z_B\eqdistr 4Y_1$, $Z_C\eqdistr Y_1+Y_2+Y_3+Y_4$, 
$Z_D\eqdistr 2(Y_1 + Y_2) \eqdistr Z_E$ and $Z_F \eqdistr 3\,Y_1+V_1+V_2+W$,
where $Y_1,\ldots,Y_4,V_1,V_2$ and $W$ are independent, $Y_1,\ldots,Y_4$  have the same distribution as $Y$,
and $V_1,V_2$ have the same distribution as $V$. Theorem~\ref{thm:asdistrDE} can similarly be used to obtain
a decomposition of $Z_{\DE}$.
\brend  
\end{remark}

The distributional equalities above refer to the individual random variables, not to their joint distributions.
For the latter we can use some fixed orthonormal basis of $\bH_2$ simultaneously for the individual tests; 
see Section~\ref{subsec:processes}.  This leads to a representation of the distribution 
of linear combinations as a series in polynomials of degree 2 in terms of independent 
standard normals, now including mixed (non-quadratic) terms; see~\cite[Theorem 12.10]{vdV}.  
The test based on $\rho^*$ in \eqref{eq:defastar} is a case in point. Further, 
such a simultaneous representation with a fixed basis provides an approach for general convex combinations
to the asymptotic null distributions of consistent tests that are based on quasirandomness.

\section{Efficiencies}\label{sec:eff}

We now use various efficiency concepts to compare the tests $T^A$, with $A$ an element of  $\{B,C,D,E,F,\DE\}$. Our main results in this section are Theorem~\ref{final:loc:ex:Ba} and 
equation~\eqref{eq:effsequal}. For the first of these we make use of a large deviation result
for $U$-statistics obtained by Nikitin and Ponikarov~\cite{NikiPoni}. Other efficiency concepts
are discussed at the beginning of Section~\ref{subsec:alteff}. 
For background, technical details, proofs and additional material we refer the reader to 
Chapters 14 and 15 in~\cite{vdV} and to the monograph~\cite{Nikitin}. 


\subsection{Bahadur efficiency}\label{subsec:baha}

Let $\{P_\theta:\theta\in\Theta\}$, $\Theta \subset \bR$ a non-empty open interval, be an injectively
parameterized family of probability distributions on some measurable space $(E,\cF)$. We assume that
$Z_1,Z_2,\ldots$ are independent and identically distributed random variables with values 
in $E$ and distribution $P_\theta$, with some (unknown) $\theta\in\Theta$, and we consider 
tests of the  hypothesis $H_0:\theta=\theta_0$ against the (two-sided) alternative 
$H_1:\theta\in\Theta_1$, $\Theta_1:=\Theta\setminus\{\theta_0\}$. 
Let $T_i=(T_{i,n})_{n\in\bN}$, $i=1,2$, be two sequences of test statistics, 
where $T_{i,n}=T_{i,n}(Z_1,\ldots,Z_n)$ is a function of the first $n$ variables.
We exclusively deal with tests rejecting the hypothesis for large values of the test statistics.
For given significance level $\alpha\in (0,1)$ and sample size $n$ let 
$L_{T_i,n}=P_{\theta_0}(T_{i,n}\ge t)_{t=T_{i,n}}$ be the level attained by $T_{i,n}$. 
We consider the nonrandomized level $\alpha$ test with critical region $\{L_{T_i,n}\le \alpha\}$.
Defining $t_{i,\alpha}:=\inf\left \{t \in \bR:~P_{\theta_0}\big (T_{i,n}\ge t \big )\le \alpha\right \}$ it
is easily seen that depending on whether $P_{\theta_0}\big (T_{i,n}\ge t_{i,\alpha} \big )$ is less than or
equal to $\alpha$ or bigger than $\alpha$, the critical region of the $T_{i,n}$ test is equal to
$\{T_{i,n}\ge t_{i,\alpha}\}$ or $\{T_{i,n} > t_{i,\alpha}\}$. Throughout we assume that the tests are
consistent for each given level $\alpha$, i.e. 
$\lim_{n\to\infty}\bP_\theta\big (L_{T_i,n}\le \alpha \big )=1$ for all $\theta\neq\theta_0$, and that the distributions
$P_\theta$, $\theta\in \Theta$, are absolutely continuous with respect to $P_{\theta_0}$. 
For $\beta\in(\alpha,1)$ and $\theta\in\Theta_1$ let 
\begin{equation*}
  N_{T_i}(\alpha,\beta,\theta)\,
            :=\,\inf\bigl\{m\in \bN:~P_\theta (L_{T_i,n}\le \alpha )\ge \beta\; \text{ for all }n\ge m \bigr\}.
\end{equation*}
These quantities grow to infinity as $\alpha$ tends to 0; see \cite[Proof of Theorem 14.22]{vdV}.
Let
\begin{equation*}
  {\rm eff}_{T_2,T_1}^{\bam}(\beta,\theta)\,:=\,\liminf_{\alpha\downarrow 0}\, \frac{N_{T_1}(\alpha,\beta,\theta)}{N_{T_2}(\alpha,\beta,\theta)}\  \text{ and }\ 
  {\rm eff}_{T_2,T_1}^{\bap}(\beta,\theta)\,:=\,\limsup_{\alpha\downarrow 0}\, \frac{N_{T_1}(\alpha,\beta,\theta)}{N_{T_2}(\alpha,\beta,\theta)}.
\end{equation*}
If $ {\rm eff}_{T_2,T_1}^{\bam}(\beta,\theta)={\rm eff}_{T_2,T_1}^{\bap}(\beta,\theta)$
then the limit
\begin{equation*}
  {\rm eff}_{T_2,T_1}^{\ba}(\beta,\theta)\,:=\,\lim_{\alpha\downarrow 0}\, \frac{N_{T_1}(\alpha,\beta,\theta)}{N_{T_2}(\alpha,\beta,\theta)}
\end{equation*} 
is called the \emph{(exact) Bahadur asymptotic relative efficiency} (ARE) of $T_2$ with respect 
to $T_1$. 
The Bahadur ARE may not exist, or it may be difficult to calculate. If 
the limits $\lim_{\theta\to\theta_0}{\rm eff}_{T_2,T_1}^{\bam}(\beta,\theta)$ and
$\lim_{\theta\to\theta_0}{\rm eff}_{T_2,T_1}^{\bap}(\beta,\theta)$ both exist and are equal, then
\begin{equation*}
       {\rm eff}_{T_2,T_1}^{\ba}\, = \, \lim_{\theta\to\theta_0}{\rm eff}_{T_2,T_1}^{\bam}(\beta,\theta) 
            \ = \ \lim_{\theta\to\theta_0}{\rm eff}_{T_2,T_1}^{\bap}(\beta,\theta)
\end{equation*}
is called the {\em (exact) local Bahadur {\rm ARE}} of the $T_2$ test with respect to the $T_1$ test.

Suppose that  for each $\theta\in\Theta_1$ there exist constants $0<c_{T_i}(\theta)<\infty$ such that
\begin{equation}\label{eq:slopes}
      -\frac{1}{n} \log L_{T_i,n}\ \to\ \frac{1}{2} c_{T_i}(\theta)\quad\text{in } P_\theta\text{-probability}, 
\end{equation}  
$c_{T_i}(\theta)$ is known as the {\em Bahadur slope} of $T_i$. Then the Bahadur {\rm ARE} of
$T_2$ with respect to $T_1$ at the parameter $\theta$ exists, does not depend on $\beta$,
and is given by the ratio of the respective slopes,  
\begin{equation*}
   {\rm eff}_{T_2,T_1}^{\ba}(\theta)\,=\,\frac{c_{T_2}(\theta)}{c_{T_1}(\theta)}.
\end{equation*}  
We require sufficient conditions for~\eqref{eq:slopes}.  
Suppose that there are functions $b_i:\Theta\to\bR$ such that for all $\theta\in\Theta_1$
\begin{equation*}
  T_{i,n}\to b_i(\theta)\quad\text{in } P_\theta\text{-probability}.
\end{equation*}
Additionally, let $I_i$ be open intervals that contain the set $b_i(\Theta_1)$, and suppose that
$f_i:I_i\to\bR$, $i=1,2$, are continuous functions such that
\begin{equation*}
  -\frac{1}{n} \log P_{\theta_0}(T_{i,n}\ge t)\ \longrightarrow\ \frac{1}{2}f_i(t)\quad\text{for all }t\in I_i.
\end{equation*}
Then \eqref{eq:slopes} holds, with $c_{T_i}(\theta)=f_i\bigl (b_i(\theta)\bigr )$ for 
$\theta\in \Theta_1$; see~\cite[Section 14.4]{vdV}.

Finally, given a kernel $h$ of order $k$ let $V=(V_n)_{n\ge k}$, with
\begin{equation*}
  V_n=V_n(Z_1,\ldots,Z_n)=\frac{1}{n^k}\sum_{(i_1,\ldots,i_k)\in [n]^k} h(Z_{i_1},\ldots,Z_{i_k}),\quad n\ge k,
\end{equation*}
be the sequence of associated  $V$-statistics. Our main tool in this section is the following result 
for large deviation probabilities of degenerate $U$- and $V$-statistics~\cite[Theorem 3.1]{NikiPoni}. 

\begin{theorem*}[Nikitin and Ponikarov 2001]
Let $(E,\mathcal F)$ be a Borel space. Suppose that the kernel~$h$ is bounded, of degree $k$, and
degenerate with rank 2. Suppose further  that the Hilbert--Schmidt operator $K$ associated with 
the kernel $h_2$ is positive and has 
a simple largest eigenvalue $\lambda_1>0$. Then there exist a real number $\delta>0$ and an 
analytic function $d:(-\delta,\delta )\rightarrow \bR$, represented by the power series
\begin{equation*}
d(s)\,=\,\frac{1}{\lambda_1\binom{k}{2}}s^2 + \sum_{j=3}^\infty d_js^j,\quad s\in (-\delta,\delta),
\end{equation*}  
such that for all $t\in (0,\delta^2)$ and all real sequences $t_n\rightarrow 0$ it holds that 
\begin{equation*}
   \lim_{n\to\infty} -\frac{1}{n} \log P(V_n \ge t + t_n)\,
                     =\,\lim_{n\to\infty} - \frac{1}{n} \log P(U_n \ge t + t_n)\,
                     =\,\frac{1}{2}\,d\bigl (t^{1/2}\bigr ).
\end{equation*}
\end{theorem*}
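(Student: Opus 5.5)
Since $h$ is bounded, I would first reduce the $U$-statistic to the $V$-statistic. Expanding $V_n$ over index tuples with repetitions gives $|U_n-V_n|\le c_k\|h\|_\infty/n$ deterministically. This perturbation of size $O(1/n)$ can be absorbed into the sequence $t_n$, so both limits coincide once the $V$-statistic limit is shown to hold uniformly for all sequences $t_n\to0$. That uniformity will come from continuity in $t$ of the rate obtained below.

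For $V_n$ I would write $V_n=\Phi(P_n)$, with $P_n$ the empirical measure and
\begin{equation*}
\Phi(Q)=\int h\,dQ^{\otimes k}.
\end{equation*}
Because $h$ is bounded and measurable, $\Phi$ is continuous for the $\tau$-topology on probability measures on the Borel space $E$. Sanov's theorem in the $\tau$-topology together with the contraction principle then gives
\begin{equation*}
\lim_{n\to\infty}-\frac1n\log P(V_n\ge t)=J(t):=\inf\bigl\{\mathrm{KL}(Q\|P):\Phi(Q)\ge t\bigr\},
\end{equation*}
provided $J$ is continuous at $t$, which makes the lower and upper bounds agree. I would obtain that continuity, for $t\in(0,\delta^2)$, from the explicit local description of $J$ in the next step, and the same continuity yields the uniformity in $t_n$.

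Next I would analyse the variational problem for small $t$. Write $Q=(1+g)P$ with $\int g\,dP=0$ and $g\ge-1$. Degeneracy ($h_1=0$) kills the linear term, so
\begin{equation*}
\Phi(Q)=\tbinom{k}{2}\langle Kg,g\rangle+\sum_{m=3}^k\tbinom{k}{m}\int h_m\,g^{\otimes m}\,dP^{m},\qquad
\mathrm{KL}(Q\|P)=\tfrac12\|g\|^2+O\bigl(\textstyle\int|g|^3dP\bigr).
\end{equation*}
Minimising $\tfrac12\|g\|^2$ subject to $\binom{k}{2}\langle Kg,g\rangle=t$ gives $g=s\varphi_1$ with $s^2=t/(\binom{k}{2}\lambda_1)$. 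The resulting value $t/(2\binom{k}{2}\lambda_1)$ is exactly the leading term of $\tfrac12 d(t^{1/2})$. Positivity of $K$ with a simple top eigenvalue makes this minimiser nondegenerate, unique up to sign, and forces the Lagrange multiplier to equal $1/\lambda_1$.

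To obtain the full analytic $d$, I would parametrise by $s=\sqrt t$ and look for minimisers $g=s\varphi_1+s^2\psi(s)$ with $\psi\perp\varphi_1$. The Lagrange equations (with the exact entropy $\int(1+g)\log(1+g)\,dP$) can be posed in $L^\infty(P)$. This works because $\varphi_1=\lambda_1^{-1}K\varphi_1$ is bounded, $h$ being bounded, so $1+g>0$ for small $s$. The equations are then analytic in $(s,\psi,\text{multiplier})$. The linearisation in $\psi$ is $I-\lambda_1^{-1}K$ restricted to $\varphi_1^\perp$, which is invertible by simplicity of $\lambda_1$. The analytic implicit function theorem then gives an analytic branch, and $d(s)=2\,\mathrm{KL}(Q_s\|P)$ is analytic with the stated quadratic coefficient.

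The main obstacle is showing that this local critical branch is the global infimum in the definition of $J(t)$ for small $t$. I would argue that any $Q$ with $\Phi(Q)\ge t$ and $\mathrm{KL}(Q\|P)\le Ct$ must have $\|g\|$ small. Then a truncation argument on $\{|g|>M\}$, using that the entropy dominates $\|g\|_{L^1}$ and $\int|g|^3$ on sets where $g$ is large, reduces matters to bounded $g$, where the second-order expansion and the uniqueness from the implicit function theorem apply.
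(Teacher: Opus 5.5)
The paper does not prove this theorem. It quotes Nikitin and Ponikarov, where the result is proved on $([0,1],\cF_1)$, and in Remark~\ref{rem:Borel_Space} it only adds the transfer to general Borel spaces via a Borel isomorphism and notes that positivity of $K$ is needed. Your plan is a direct proof on $E$ itself, via Sanov in the $\tau$-topology followed by an analytic variational problem, which would make that transfer unnecessary. The $U$--$V$ reduction and the local bifurcation analysis at $s\varphi_1$ are sound in outline. Two steps, however, fail as written.

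\textbf{First gap: $\tau$-continuity of $\Phi$.} $\Phi(Q)=\int h\,dQ^{\otimes k}$ is not $\tau$-continuous for every bounded measurable $h$, so the contraction step has no basis. A counterexample:
\begin{itemize}
\item Take $E=[0,1]$, $P$ Lebesgue measure and $D=\{(x,y):x-y\in\mathbb{Q}\}$.
\item Every basic $\tau$-neighbourhood of $P$ contains all $R$ with $R(A_j)=P(A_j)$, $j\le r$, for some finite Borel partition $A_1,\dots,A_r$. (Approximate the finitely many defining functions uniformly by simple ones.)
\item Each set $\bigcup_{q\in\mathbb{Q}}(A_j-q)$ with $P(A_j)>0$ is conull. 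So you can choose $x_j\in A_j$ all lying in one coset $x_0+\mathbb{Q}$.
\item Then $R=\sum_j P(A_j)\delta_{x_j}$ lies in the neighbourhood, yet $R^{\otimes 2}(D)=1$ while $P^{\otimes 2}(D)=0$.
\end{itemize}
Now add $\sum_{i<j}1_D(z_i,z_j)$ to an admissible kernel. This leaves $Eh$, $h_1$ and $h_2$ ($P^{\otimes2}$-a.e.) unchanged, hence all hypotheses. It shifts $V_n$ only by $\binom{k}{2}/n$, but it changes $\Phi(Q)$ by $\binom{k}{2}Q^{\otimes 2}(D)$. So the hypotheses allow functionals that are discontinuous at $P$ itself, and you cannot even fall back on continuity at finite-entropy points.

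The rate $J$ is still correct, but it needs a different argument. One way:
\begin{itemize}
\item Approximate $h$ in $L^1(P^{\otimes k})$ by bounded finite sums of products $f_1\otimes\cdots\otimes f_k$; for these, $\Phi$ is a polynomial in finitely many integrals $\int f\,dQ$.
\item Obtain exponential equivalence of the corresponding $U$-statistics from Hoeffding's representation of $U_n$ as an average of means of $\lfloor n/k\rfloor$ i.i.d.\ blocks.
\item Identify the limiting rate with $J$ using uniform integrability of the densities on entropy sublevel sets.
\end{itemize}

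\textbf{Second gap: the global minimiser.} The step you call the main obstacle is not settled by truncation. Cutting off $\{|g|>M\}$, like the cubic remainder itself, perturbs the constraint $\Phi(Q)\ge t$ by terms of order $t^{3/2}$. That is exactly the order of $d_3 s^3$ at $s=t^{1/2}$. Truncation therefore yields only $J(t)=t/(2\binom{k}{2}\lambda_1)\,(1+o(1))$, not the exact identity $J(t)=\frac12 d(t^{1/2})$ with $d$ analytic. Also, the entropy density $(1+g)\log(1+g)-g$ grows like $g\log g$, so it does not dominate $|g|^3$ where $g$ is large.

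The uniqueness part of the implicit function theorem applies only to \emph{critical points} in a small $L^\infty$-ball. What is missing is:
\begin{itemize}
\item existence of a minimiser $Q^*$: entropy sublevel sets are weakly compact in $L^1(P)$, and $\Phi$ is weakly sequentially continuous on them;
\item a Lagrange rule giving the Gibbs form $dQ^*/dP\propto\exp\bigl(\beta k H^*\bigr)$, where $H^*(z)=\int h(z,\cdot)\,dQ^{*\otimes(k-1)}$;
\item a localisation of $\beta$ near $1/(2\binom{k}{2}\lambda_1)$.
\end{itemize}
Since $h_1=0$ and by Pinsker, $\|H^*\|_\infty\le (k-1)\|h\|_\infty\|g^*\|_{L^1}\le (k-1)\|h\|_\infty(2J(t))^{1/2}$. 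Together with the Gibbs form, this puts $g^*$ in the $L^\infty$-ball where your analytic branch is the unique solution.

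Finally, $J(t)$ is the smaller of the values on the two branches through $\pm\varphi_1$. You must fix the sign of $\varphi_1$ so that the branch giving $\frac12 d(+t^{1/2})$ is the cheaper one. This is possible because $d(s)-d(-s)$ is odd and analytic, hence of constant sign for small $s>0$.
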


\medbreak
\begin{remark}\label{rem:Borel_Space} (a) The original version of the preceding theorem deals with the
special Borel space $([0,1],\cF_1)$. For a general Borel space $(E,\cF)$ we have a Borel set
$R\subset \bR$ such that $(E,\cF)$ and the measurable space $(R,\cS)$, with $\cS$
the $\sigma$-field of Borel sets of $R$, are Borel isomorphic, i.e.\ there is a bijective map
$\Psi:R\rightarrow E$ with the property that $\Psi$ and its inverse $\Psi^{-1}$ are measurable.
Using this it is easily verified that the assertion of the theorem carries over to general Borel spaces.  

(b) It is also not explicitly requested in the original version of the theorem that the
Hilbert--Schmidt operator $K$ associated with
the kernel $h_2$ is positive, but this is needed 
in connection with the `normalization conditions' stated in the proof of the theorem.
\brend
\end{remark}

We now assume that $\theta_0=0\in \Theta$, and that $\{P_\theta:\theta\in\Theta\}$
is a family of probability distributions on $E=[0,1]^2$ such that,
for each $\theta\in \Theta$, the distribution function of $P_\theta$ is a bivariate copula $C_\theta$, with
$C_0=C\ind$ the independence copula. To derive the Bahadur slope of the $T^A$ tests we first note that, 
for all $\theta\in\Theta$,
\begin{equation*}
  T_{n}^A\rightarrow b_A(\theta)\quad\text{in }\, P_\theta\text{-probability, 
                                        with }\,b_A(\theta):=\bE_\theta\bigl(h^A(Z_1,Z_2,Z_3,Z_4)\bigr).
\end{equation*}
We impose the additional condition that for each $\theta\in\Theta$ the distribution $P_\theta$ has a
$P_0$-density $p_\theta$ that can be written as 
\begin{equation}\label{eq:localt1}
  p_\theta = 1+\theta q_\theta,\quad\text{with }\, q_\theta,q \in \bH_2,\,q \neq 0,\,\text{ such that }\,\|q_\theta - q \|\to 0
  \,\text{ as }\,\theta\to 0.
\end{equation}  
In particular, $P_\theta$ is absolutely continuous with respect to $P_0$.
With $P_0^m$ the $m$-fold product measure of $P_0$ we then obtain
\begin{equation}\label{eq:Ld:TA:H1}
\begin{aligned}
  b_A(\theta)&\,=\ \int h^A(z_1,z_2,z_3,z_4)\prod_{j=1}^4 \bigl (1+\theta q_\theta(z_j)\bigr )\,dP_0^4(z_1,z_2,z_3,z_4)\\
  &=\ 6\theta^2 \int h_2^A(z_1,z_2)\,q (z_1)\,q (z_2)\,dP_0^2(z_1,z_2)\,+\,o(\theta^2)\\
  &=\ 6\theta^2\,\langle K(A)q ,q \rangle \,+\,o(\theta^2)\quad\text{as }\,\theta\to 0.
\end{aligned}
\end{equation}                   
Note that \eqref{eq:Ld:TA:H1} implies that $b_A(\theta)>0$ if $|\theta|$ is positive and small enough.

In the hypothesis case the theorem of Nikitin and Ponikarov    
directly applies to the sequences $T^B=(T_n^B)_{n\in\bN}$, $T^F=(T_n^F)_{n\in\bN}$, and $T^{\DE}=(T_n^{\DE})_{n\in\bN}$ of
$U$-statistics, because according to Proposition \ref{prop:eigstruct1} and Proposition \ref{prop:DE}
the associated Hilbert--Schmidt operators $K(B)$, $K(F)$, and $K(\DE)$ have the respective simple largest
eigenvalues $\lambda_{B,1}=\frac{6}{\pi^4}$, $\lambda_{F,1}=\frac{1}{2\gamma_1^2}$, and
$\lambda_{\DE,1}=\Hat \lambda_1=1/\Hat z_1$, with
$\gamma_1\in (0,2\pi^2/3)$ the smallest positive zero of the function $\omega$ defined in \eqref{eq:omega},
and $\Hat z_1\in (z_1^*,z_2^*)$ the smallest positive zero of the function $\omega_{\DE}$ given in \eqref{eq:omegaDE}.
Thus, for $A\in\{B,F,\DE\}$ there exist a $\delta_A>0$ and an analytic function
$d_A$ represented in the interval $(-\delta_A,\delta_A)$ by the power series
$d_A(s)\,=\,\frac{1}{6\lambda_{A,1}}s^2 + r_A(s)$, with remainder $r_A(s)=o(s^3)$ as $s\to 0$,
such that for all $t\in (0,\delta_A^2)$ and all real sequences
$t_n\rightarrow 0$ it holds that
\begin{equation*} 
    \lim_{n\to\infty} - \frac{1}{n} \log P_0(T_n^A \ge t + t_n)\,=\,\frac{1}{2}\,d_A\bigr (t^{\nf{1}{2}}\bigl )\,>\,0.
\end{equation*}
Using this and \eqref{eq:Ld:TA:H1} we obtain that for $\theta\in\Theta_1$ such that
$0< b_A(\theta)<\delta_A^2$
the Bahadur slope $c_A(\theta)$ of $T^A$ is given by  
\begin{equation}\label{def:slope:BF}
  c_{A}(\theta)\,=\,d_A\left (b_A(\theta)^{\nf{1}{2}}\right )\,>\,0,
\end{equation}  
with 
\begin{equation*}
    c_{A}(\theta)\,=\,\frac{1}{\lambda_{A,1}}\theta^2 \int h_2^A(z_1,z_2)\,q (z_1)\,q (z_2)\,dP_0^2(z_1,z_2)\,+\,o(\theta^2)\quad\text{as }\,\theta\to 0.
\end{equation*}
Arguing as in \cite[Proof of Theorem 14.22]{vdV} we now arrive at the following result. 
\begin{proposition}\label{prop:slope:BF}
For $A\in \{B,F,\DE\}$ let $\theta_A>0$ be such that $0<b_A(\theta)<\delta_A^2$ for all $\theta\in \Theta_1$,
$|\theta|<\theta_A$. Then
\begin{equation*}
   \lim_{\alpha\downarrow 0}\,\frac{1}{N_{T^A}(\alpha,\beta,\theta)}\,\log \alpha\,=\,c_{A}(\theta)\,>\,0
                           \quad\text{for all }\,\theta\in\Theta_1,\,|\theta|<\theta_A.
\end{equation*}
\end{proposition}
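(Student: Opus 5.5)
Following \cite[Proof of Theorem 14.22]{vdV}, we work on a fixed $\theta\in\Theta_1$ with $|\theta|<\theta_A$, so that $0<b_A(\theta)<\delta_A^2$. The aim is to show that $-\frac1n\log L_{T^A,n}\to\frac12 c_A(\theta)$ in $P_\theta$-probability, i.e.\ that \eqref{eq:slopes} holds, and then to convert this slope statement into asymptotics for $N_{T^A}(\alpha,\beta,\theta)$.

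\emph{Step 1: the slope.} Under $P_\theta$ the law of large numbers for $U$-statistics gives $T_n^A\to b_A(\theta)$ a.s. Put $f_A(t):=d_A(t^{1/2})$ on $I_A=(0,\delta_A^2)$; this is continuous and strictly increasing near $b_A(\theta)$, since $d_A$ is analytic with leading term $s^2/(6\lambda_{A,1})$. By the Nikitin--Ponikarov theorem in the version with moving thresholds $t+t_n$, for every sequence $t_n\to0$ we have $-\frac1n\log P_0(T_n^A\ge b_A(\theta)+t_n)\to\frac12 f_A(b_A(\theta))$. To pass to the random argument, fix $\varepsilon>0$. On the event $\{|T_n^A-b_A(\theta)|<\varepsilon\}$, monotonicity of $t\mapsto P_0(T_n^A\ge t)$ sandwiches $L_{T^A,n}$ between $P_0(T_n^A\ge b_A(\theta)\pm\varepsilon)$. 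Letting $n\to\infty$ and then $\varepsilon\downarrow0$, continuity of $f_A$ yields \eqref{eq:slopes} with $c_{T^A}(\theta)=c_A(\theta)$. Positivity of $c_A(\theta)$ is \eqref{def:slope:BF}.

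\emph{Step 2: from slopes to sample sizes.} Write $N_\alpha:=N_{T^A}(\alpha,\beta,\theta)$. We know $N_\alpha\to\infty$ as $\alpha\downarrow0$, because for fixed $n$ the distribution of $L_{T^A,n}$ has no mass near zero. Fix $\eta>0$ and put $Y_n:=-\frac1n\log L_{T^A,n}$.

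For the upper bound on $N_\alpha$: since $P_\theta(Y_n>\frac12 c_A-\eta)\to1>\beta$, there is $n_0$ such that this probability exceeds $\beta$ for all $n\ge n_0$. For any $n\ge\max\{n_0,\,-\log\alpha/(\frac12c_A-\eta)\}$ we then get $P_\theta(L_{T^A,n}\le\alpha)\ge\beta$. Hence $N_\alpha\le\max\{n_0,\,-\log\alpha/(\frac12 c_A-\eta)\}+1$.

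For the lower bound on $N_\alpha$: by definition $P_\theta(L_{T^A,N_\alpha}\le\alpha)\ge\beta$. Since $N_\alpha\to\infty$ and $P_\theta(Y_n<\frac12c_A+\eta)\to1$, for small $\alpha$ the event $\{L_{T^A,N_\alpha}\le\alpha\}\cap\{Y_{N_\alpha}<\frac12 c_A+\eta\}$ has positive probability. This forces $-\log\alpha< N_\alpha(\frac12 c_A+\eta)$.

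Combining both bounds and letting $\eta\downarrow0$ gives $-\log\alpha/N_\alpha\to\frac12 c_A(\theta)$. The factor $\frac12$ versus the claimed limit $c_A(\theta)$ must be reconciled with the normalization conventions in \eqref{eq:slopes} and \cite{vdV}. In particular the sign of the claimed limit (it is $\frac{1}{N}\log\alpha$, which is negative) suggests the intended statement is $\lim -\frac{1}{N}\log\alpha=\frac12 c_A(\theta)$ up to that convention. I would follow the paper's normalization, so the displayed identity is read with this convention.

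\emph{Main obstacle.} The delicate point is Step 1: applying the large deviation theorem with a \emph{random} threshold. This requires the uniform-in-$t_n$ form of Nikitin--Ponikarov together with continuity of $d_A$ at $b_A(\theta)$, which is exactly why $b_A(\theta)<\delta_A^2$ is imposed. The hypotheses of that theorem are already verified: boundedness of the kernel, degeneracy, positivity of $K(A)$ and a simple top eigenvalue from Propositions~\ref{prop:eigstruct1} and \ref{prop:DE}, and the general Borel space by Remark~\ref{rem:Borel_Space}.
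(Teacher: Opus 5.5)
Your proof is correct and follows the paper's route. The paper gets \eqref{eq:slopes} for $T^A$ from $T_n^A\to b_A(\theta)$ and the Nikitin--Ponikarov limit, via the sufficient condition of \cite[Section 14.4]{vdV}; this is your sandwich argument in Step~1. It then says only ``arguing as in \cite[Proof of Theorem 14.22]{vdV}'', which is the slope-to-sample-size conversion you carry out in Step~2.

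Your reading of the normalization is also right. As printed, the left-hand side is negative, and under \eqref{eq:slopes} the correct conclusion is $-\frac{1}{N_{T^A}(\alpha,\beta,\theta)}\log\alpha\to\frac{1}{2}c_A(\theta)$. The lost minus sign and factor $\frac{1}{2}$ are slips in the paper, which also drops the $\frac{1}{2}$ in \eqref{eq:V-statistic:+} and \eqref{eq:V-statistic:-}. They are harmless for Theorem~\ref{final:loc:ex:Ba} once one convention is used for all six tests, since only ratios of slopes enter there.
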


To obtain a related result for the other $T^A$ tests we require a generalization of the
Nikitin--Ponikarov theorem to situations where the largest eigenvalue $\lambda_1$ has multiplicity
$m_1\ge 2$. While we may assume that $m_1=2$ in connection with the $T^A$ tests, we treat the
general case as it might be of interest in other situations.

Let $(\lambda_j)_{j\ge 1}$ be
the non-increasing sequence of positive eigenvalues of the Hilbert--Schmidt operator $K$, counted according to their
multiplicities, with $(\varphi_j)_{j\ge 1}$ the sequence of associated eigenfunctions  
of $K$. As the kernel $h$ is bounded, we may assume that the $\varphi_i$ are are bounded as well.
Let $s:=\sup\{\lambda_j:j>m_1\}$, with $s:=0$ if a positive eigenvalue of $K$ different from $\lambda_1$ does not exist.
For $\epsilon \in (0,\lambda_1-s)$ let the kernel $h_{+,\epsilon}:E^k\rightarrow \bR$ be defined by
$h_{+,\epsilon}(z_1,\ldots,z_k):=h(z_1,\ldots,z_k) + \epsilon\sum_{1\le i<j\le k}\varphi_1(z_i)\varphi_1(z_j)$ for
$(z_1,\ldots,z_k)\in E^k$. Obviously, $h_{+,\epsilon,1}=0$ $P$-almost surely, and
\begin{equation*}
  h_{+,\epsilon,2}(z_1,z_2)\,=\,h_2(z_1,z_2)\,+\,\epsilon\,\varphi_1(z_1)\varphi_1(z_2)\quad\text{for all }\,z_1,z_2\in E.
\end{equation*}  
The Hilbert--Schmidt operator $K_{+,\epsilon}$ associated with $h_{+,\epsilon,2}$ has the simple largest
eigenvalue $\lambda_{+,1,\epsilon}=\lambda_1+\epsilon$. The $V$-statistics with 
kernel $h_{+,\epsilon}$ are
\begin{equation*}
V_{+,\epsilon,n}\,=\,V_{n}\,+\,\epsilon\, \binom{k}{2} \left (\frac{1}{n}\sum_{j=1}^n \varphi_1(Z_j)\right )^2.
\end{equation*}
Similarly, defining the kernel $h_{-,\epsilon}:E^k\rightarrow \bR$ by
\begin{equation*}
  h_{-,\epsilon}(z_1,\ldots,z_k):=h(z_1,\ldots,z_k) - \epsilon\sum_{\ell = 2}^{m_1}\sum_{1\le i<j\le k}\varphi_\ell(z_i)\varphi_\ell(z_j)
\end{equation*}  
for $(z_1,\ldots,z_k)\in E^k$, we have that $h_{-,\epsilon,1}=0$ $P$-almost surely and that
\begin{equation*}
  h_{-,\epsilon,2}(z_1,z_2)
          \, =\, h_2(z_1,z_2) - \epsilon \sum_{\ell =2}^{m_1}\varphi_\ell(z_1)\varphi_\ell (z_2)\quad
  \text{for all }\,z_1,z_2\in E.
\end{equation*}  
The Hilbert--Schmidt operator $K_{-,\epsilon}$ associated with $h_{-,\epsilon,2}$ has the simple largest
eigenvalue $\lambda_{-,1,\epsilon}=\lambda_1$. The $V$-statistics with
kernel $h_{-,\epsilon}$ are
\begin{equation*}
  V_{-,\epsilon,n}\,=\,V_{n}\,-\,\epsilon\, \binom{k}{2} \sum_{\ell =2}^{m_1}\left (\frac{1}{n}\sum_{j=1}^n \varphi_\ell(Z_j)\right )^2.
\end{equation*}
Fix $\epsilon\in (0,\lambda_1-s)$.
As the Nikitin--Ponikarov theorem applies to the sequences of $V$-statistics $(V_{\pm,\epsilon,n})_{n\ge k}$, we obtain as
a first consequence that there exist a real number $\delta_\epsilon>0$ and analytic functions
$d_{\pm,\epsilon}$ represented in the interval $(-\delta_\epsilon,\delta_\epsilon)$ by power series 
$d_{\pm,\epsilon}(s)\,=\,\frac{s^2}{\lambda_{\pm,1,\epsilon}\binom{k}{2}}\,+\,r_{\pm,\epsilon}(s)$, 
with remainders $r_{\pm,\epsilon}(s)=o(s^3)$ as $s\to 0$.   
Thus, for $t\in (0,\delta_\epsilon^2)$,    
\begin{equation}\label{eq:V-statistic:+}
  \liminf_{n\to\infty}\,-\frac{1}{n} \log P_0(V_n\ge t)\,\ge\, \lim_{n\to\infty} - \frac{1}{n} \log P_0(V_{+,\epsilon,n}\ge t)\, =\, d_{+,\epsilon}(t^{1/2}),  
\end{equation}
and
\begin{equation}\label{eq:V-statistic:-}
  \limsup_{n\to\infty}\, -\frac{1}{n} \log P_0(V_n\ge t)\,\le\, \lim_{n\to\infty} -\frac{1}{n}\log P_0(V_{-,\epsilon,n}\ge t)\, =\, d_{-,\epsilon}(t^{1/2}).  
\end{equation}
The following lemma provides statements corresponding to \eqref{eq:V-statistic:+} and
\eqref{eq:V-statistic:-} for the sequence $(U_n)_{n\ge k}$ of $U$-statistics.
\begin{lemma}\label{lem:U-statistic:ungl}
  Let $\|h\|_\infty:=\sup_{(z_1,\ldots,z_k) \in E^k}|h(z_1,\ldots,z_k)|$,
  \begin{equation*}
  \tau_{-,\epsilon}:= \min\bigl (1,\delta_\epsilon^2,(3\|h\|_\infty)^{\nf{1}{2}}\bigr )~\text{\,and\,}~
  \tau_{+,\epsilon}:= \min\bigl ((3\|h\|_\infty)^{\nf{1}{2}},-\nf{1}{2} + (\delta_\epsilon^2+\nf{1}{4})^{\nf{1}{2}}\bigr )\bigr ).
  \end{equation*}
  Then, for all $t\in \bigl (0,\tau_{+,\epsilon})$,   
\begin{equation*}
  \limsup_{n\to\infty}\,-\frac{1}{n}\log P_0(U_n\ge t)\,\le\,d_{-,\epsilon}\bigl ((t(1+t))^{\nf{1}{2}}\bigr ),  
\end{equation*}
and for all $t\in \bigl (0,\tau_{-,\epsilon}\bigr )$,
\begin{equation*}
  \liminf_{n\to\infty}\,-\frac{1}{n}\log P_0(U_n\ge t)\,\ge\,d_{+,\epsilon}\bigl ((t(1-t))^{\nf{1}{2}}\bigr ).  
\end{equation*}
\end{lemma}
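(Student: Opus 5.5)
The plan is to compare the $U$-statistics $U_n$ with the $V$-statistics $V_n$ through a deterministic $O(1/n)$ bound. Then the bounds \eqref{eq:V-statistic:+} and \eqref{eq:V-statistic:-} transfer to $U_n$ at the price of a small shift of the threshold $t$, and the factors $1\pm t$ absorb that shift.

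\emph{Step 1 (deterministic comparison).} Write $n^kV_n$ as the sum over all $k$-tuples in $[n]^k$. The tuples with pairwise distinct indices contribute exactly $k!\binom{n}{k}U_n$, since $h$ is symmetric. The remaining tuples number $n^k-n(n-1)\cdots(n-k+1)\le \binom{k}{2}n^{k-1}$, and each contributes at most $\|h\|_\infty$ in absolute value. Hence
\begin{equation*}
  |V_n-U_n|\,\le\, 2\|h\|_\infty\Bigl(1-\frac{n(n-1)\cdots(n-k+1)}{n^k}\Bigr)\,\le\,\frac{c_k\|h\|_\infty}{n}=:\varepsilon_n\quad\text{for all }n\ge k,
\end{equation*}
with a constant $c_k$ that depends only on $k$. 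Since $\varepsilon_n\to 0$, for fixed $t>0$ we have $t+\varepsilon_n\le t(1+t)$ and $t-\varepsilon_n\ge t(1-t)$ for all sufficiently large $n$.

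\emph{Step 2 (upper bound on the rate, via $V_{-,\epsilon,n}$).} From the explicit formula for $V_{-,\epsilon,n}$ we have $V_{-,\epsilon,n}\le V_n$ pointwise. For $n$ large, Step 1 then gives
\begin{equation*}
 P_0(U_n\ge t)\,\ge\, P_0(V_n\ge t+\varepsilon_n)\,\ge\, P_0\bigl(V_n\ge t(1+t)\bigr)\,\ge\, P_0\bigl(V_{-,\epsilon,n}\ge t(1+t)\bigr).
\end{equation*}
Next apply the Nikitin--Ponikarov theorem to $V_{-,\epsilon,n}$ at the level $t(1+t)$, as in \eqref{eq:V-statistic:-}. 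This gives the claimed $\limsup$ bound $d_{-,\epsilon}((t(1+t))^{1/2})$. The theorem requires $t(1+t)<\delta_\epsilon^2$, which is equivalent to $t<-\nf{1}{2}+(\delta_\epsilon^2+\nf{1}{4})^{1/2}$; this is exactly the second entry of $\tau_{+,\epsilon}$.

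\emph{Step 3 (lower bound on the rate, via $V_{+,\epsilon,n}$).} Symmetrically, $V_{+,\epsilon,n}\ge V_n$ pointwise. For $n$ large,
\begin{equation*}
 P_0(U_n\ge t)\,\le\, P_0(V_n\ge t-\varepsilon_n)\,\le\, P_0\bigl(V_n\ge t(1-t)\bigr)\,\le\, P_0\bigl(V_{+,\epsilon,n}\ge t(1-t)\bigr).
\end{equation*}
By \eqref{eq:V-statistic:+} this yields the $\liminf$ bound $d_{+,\epsilon}((t(1-t))^{1/2})$. This step needs $t<1$, so that $t(1-t)>0$, and $t(1-t)<\delta_\epsilon^2$; the latter is guaranteed by $t<\delta_\epsilon^2$. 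These are the entries $1$ and $\delta_\epsilon^2$ of $\tau_{-,\epsilon}$. The remaining entry $(3\|h\|_\infty)^{1/2}$ in both $\tau$'s I expect to be a safeguard that keeps the relevant thresholds inside the range of the bounded statistics, so that the events above are non-trivial. I would check where it is needed, but it does not affect the argument.

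\emph{Main obstacle.} The main care is needed in Step 1: the combinatorial count of tuples with a repeated index, and the observation that the bound is uniform over the sample space, so that no probabilistic estimate is required. Once this is in place, Steps 2 and 3 reduce to monotonicity of probabilities in the threshold, together with the two pointwise inequalities $V_{-,\epsilon,n}\le V_n\le V_{+,\epsilon,n}$.
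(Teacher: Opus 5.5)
Your proof is correct and follows essentially the paper's route: the paper also sandwiches $P_0(U_n\ge t)$ between $P_0(V_n\ge t+3\eta)$ and $P_0(V_n\ge t-3\eta)$ for large $n$, takes $\eta=t^2/3$ so that the thresholds become $t(1\pm t)$, and then applies \eqref{eq:V-statistic:-} and \eqref{eq:V-statistic:+} together with $V_{-,\epsilon,n}\le V_n\le V_{+,\epsilon,n}$. The only difference is that the paper cites this $U$/$V$ comparison under the proviso $0<\eta<\|h\|_\infty$, which is the sole source of the entry $(3\|h\|_\infty)^{1/2}$ in $\tau_{\pm,\epsilon}$, whereas your explicit deterministic bound $|V_n-U_n|\le 2\binom{k}{2}\|h\|_\infty/n$ makes that entry play no role in your argument.
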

\begin{proof}  
  Arguing as in \cite[Subsection 2.4]{vdV} we see
  that for $t>0$, $0<\eta<\|h\|_\infty$, and for $n$ large enough,
  \begin{equation*}P_0(U_n \ge t)\,\le\, P_0(V_n \ge t - 3\eta)~\text{\,and\,}~
    P_0(U_n \ge t)\,\ge\, P_0(V_n \ge t + 3\eta).
  \end{equation*}
  With $t\in \bigl (0,\tau_{+,\epsilon}\bigr )$ and $\eta=\frac{1}{3}t^2$ it holds that $\eta<\|h\|_\infty$
  and that $t+3\eta<\delta_\epsilon^2$; similarly, 
with  $t\in \bigl (0,\tau_{-,\epsilon}\bigr )$ and $\eta=\frac{1}{3}t^2$ it holds that $\eta<\|h\|_\infty$
  and that $t-3\eta<\delta_\epsilon^2$. Using this, the assertions of the lemma follow from \eqref{eq:V-statistic:+}
  and \eqref{eq:V-statistic:-}.
\end{proof}
As $\|h^A\|_\infty\le 1$, Lemma \ref{lem:U-statistic:ungl} applies to the sequences of $U$-statistics $T^A$,
$A\in\{C,D,E\}$. To be precise, putting 
$U=T^A$ we have constants $s:=s_A$, $m_1=m_{A,1}$, $\delta_\epsilon=\delta_{A,\epsilon}>0$, $\lambda_1=\lambda_{A,1}$,
$\lambda_{\pm,1,\epsilon}=\lambda_{A,\pm,1,\epsilon}$, $\tau_{\pm,\epsilon}=\tau_{A,\pm,\epsilon}>0$, and analytic functions
$d_{\pm,\epsilon}=d_{A,\pm,\epsilon}$ such that
\begin{equation}\label{eq:U-statistic:ungl+}
  \limsup_{n\to\infty}\,-\frac{1}{n}\log P_0(T_n^A\ge t)\,\le\,d_{A,-,\epsilon}\bigl ((t(1+t))^{\nf{1}{2}}\bigr )
  \quad\text{for all }\,t\in \bigl (0,\tau_{A,+,\epsilon}\bigr ),
\end{equation}
and 
\begin{equation}\label{eq:U-statistic:ungl-}
  \liminf_{n\to\infty}\,-\frac{1}{n}\log P_0(T_n^A\ge t)\,\ge\,d_{A,+,\epsilon}\bigl ((t(1-t))^{\nf{1}{2}}\bigr )\,>\,0
  \quad\text{for all }\,t\in \bigl (0,\tau_{A,-,\epsilon}\bigr ).
\end{equation}
In what follows, $\epsilon$ is always chosen such that
$0<\epsilon<\epsilon^*:=\min\bigl \{s_A-\lambda_{A,1}:A\in \{C,D,E\}\bigr \}$.  
For $A\in \{C,D,E\}$ let $\tau_{A,\epsilon}:=\min(\tau_{A,-,\epsilon},\tau_{A,+,\epsilon}\bigr )$, and
let $\theta_{A,\epsilon}>0$ be chosen such that
$0<b_A(\theta)<\tau_{A,\epsilon}$ for all $\theta\in\Theta_1$, $|\theta|<\theta_{A,\epsilon}$.
For $\theta\in\Theta_1$, $|\theta|<\theta_{A,\epsilon}$, put
\begin{align*}
  c_{A,+,\epsilon}(\theta)&:=d_{A,+,\epsilon}\left (\left \{b_A(\theta)\bigl (1-b_A(\theta)\bigr)\right \}^{\nf{1}{2}}\right ),\\
  c_{A,-,\epsilon}(\theta)&:=d_{A,-,\epsilon}\left (\left \{b_A(\theta)\bigl (1+b_A(\theta)\bigr)\right \}^{\nf{1}{2}}\right ),
\end{align*}
if $A\in\{C,D,E\}$. Let $\theta_\epsilon^*:=\min(\theta_{C,\epsilon},\theta_{D,\epsilon},\theta_{E,\epsilon})$.

\begin{proposition}\label{prop:slope:CDE} Let $A\in \{C,D,E\}$. Then, for all $\epsilon\in (0,\epsilon^*)$,
  $\theta\in \Theta_1$, $|\theta|<\theta_\epsilon^*$, 
  \begin{equation*}
    \limsup_{\alpha\downarrow 0}\,-\frac{1}{N_{T^A}(\alpha,\beta,\theta)}\log \alpha\,\le\,
    c_{A,-,\epsilon}(\theta),
  \end{equation*}
  and
  \begin{equation*}
  \liminf_{\alpha\downarrow 0}\,-\frac{1}{N_{T^A}(\alpha,\beta,\theta)}\log \alpha\,\ge\,
  c_{A,+,\epsilon}(\theta)\,>\,0.
\end{equation*}
\end{proposition}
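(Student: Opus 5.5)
We follow the argument in \cite[Proof of Theorem 14.22]{vdV}, as was done for Proposition~\ref{prop:slope:BF}, but keep the upper and lower large deviation bounds separate. They come from \eqref{eq:U-statistic:ungl+} and \eqref{eq:U-statistic:ungl-}. Fix $A\in\{C,D,E\}$, $\epsilon\in(0,\epsilon^*)$ and $\theta\in\Theta_1$ with $|\theta|<\theta^*_\epsilon$, and write $b=b_A(\theta)\in(0,\tau_{A,\epsilon})$.

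The first step is to turn the bounds on the tail probabilities into bounds on the attained level. Since $T^A_n\to b$ in $P_\theta$-probability, for every small $\eta>0$ we have $b-\eta<T_n^A<b+\eta$ with $P_\theta$-probability tending to one. The map $t\mapsto P_0(T^A_n\ge t)$ is monotone. Hence, on this event,
\[
P_0(T_n^A\ge b+\eta)\le L_{T^A,n}\le P_0(T_n^A\ge b-\eta).
\]
We choose $\eta$ so small that $b\pm\eta\in(0,\tau_{A,\epsilon})$. Then \eqref{eq:U-statistic:ungl+} and \eqref{eq:U-statistic:ungl-} give, with $P_\theta$-probability tending to one,
\[
-\frac1n\log L_{T^A,n}\le d_{A,-,\epsilon}\bigl(\{(b+\eta)(1+b+\eta)\}^{1/2}\bigr)+\eta'
\]
and
\[
-\frac1n\log L_{T^A,n}\ge d_{A,+,\epsilon}\bigl(\{(b-\eta)(1-b+\eta)\}^{1/2}\bigr)-\eta'
\]
for any $\eta'>0$ and all large $n$. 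The functions $d_{A,\pm,\epsilon}$ are analytic, hence continuous. Letting $\eta,\eta'\downarrow0$ at the end will therefore produce $c_{A,\mp,\epsilon}(\theta)$.

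The second step converts these bounds into bounds on $N=N_{T^A}(\alpha,\beta,\theta)$. Since $N\to\infty$ as $\alpha\downarrow0$, we may use the displayed bounds for $n$ near $N$.

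For the upper bound on $-\log\alpha/N$, put $c^-:=c_{A,-,\epsilon}(\theta)+\eta''$. Suppose that along some sequence $\alpha\downarrow 0$ we had $-\log\alpha> N c^-$. By the definition of $N$ we have $P_\theta(L_{T^A,N}\le\alpha)\ge\beta$. On the other hand, $L_{T^A,N}\ge e^{-N(c^-_{\text{ub}})}$ with probability tending to one, where $c^-_{\text{ub}}<c^-$ denotes the limiting upper bound from the first step. Then $P_\theta(L_{T^A,N}\le\alpha)\le P_\theta(L_{T^A,N}<e^{-Nc^-_{\text{ub}}})\to0<\beta$, a contradiction. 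Since $\eta''$ is arbitrary, this gives the limsup bound.

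For the lower bound, put $c^+:=c_{A,+,\epsilon}(\theta)-\eta''$ and $m_\alpha:=\lceil -\log\alpha/c^+\rceil$. For $n\ge m_\alpha$ we have $e^{-nc^+}\le\alpha$. The first step then gives $P_\theta(L_{T^A,n}\le\alpha)\ge P_\theta(L_{T^A,n}\le e^{-nc^+})\to1$, uniformly in the sense required once $m_\alpha$ is large, that is, for small $\alpha$. Hence $N\le m_\alpha$, and so $\liminf_{\alpha\downarrow 0} -\log\alpha/N\ge c^+$. Positivity of $c_{A,+,\epsilon}(\theta)$ follows from the strict inequality in \eqref{eq:U-statistic:ungl-}, because $b>0$.

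The main obstacle is the bookkeeping in the second step. The probability bounds hold only "for $n$ large", whereas $N$ is defined through the condition "for all $n\ge m$". For the lower bound we must therefore check that $P_\theta(L_{T^A,n}\le e^{-nc^+})\ge\beta$ holds for all $n\ge m_\alpha$, not merely in the limit. This works because convergence in probability gives some $n_0$ beyond which the probability exceeds $\beta$, and $m_\alpha\ge n_0$ for $\alpha$ small. The upper bound instead uses only the single index $n=N$ together with $N\to\infty$. Finally, we note that the sign of the $\log$ in the statement means $-\log\alpha/N$, as in the displayed form of the proposition.
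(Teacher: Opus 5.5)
Your proposal is correct and follows essentially the paper's argument from \cite[Proof of Theorem 14.22]{vdV}. You first bound $-\frac1n\log L_{T^A,n}$ via \eqref{eq:U-statistic:ungl+} and \eqref{eq:U-statistic:ungl-}, then turn these into bounds on $N_{T^A}(\alpha,\beta,\theta)$ using its definition and $N_{T^A}(\alpha,\beta,\theta)\to\infty$; the differences are that you spell out the monotonicity sandwich and the continuity of $d_{A,\pm,\epsilon}$, which the paper leaves implicit, and for the lower bound you show $N_{T^A}(\alpha,\beta,\theta)\le m_\alpha$ directly instead of working with the index $N_{T^A}(\alpha,\beta,\theta)-1$, an equivalent piece of bookkeeping.
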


\begin{proof}
Again we argue as in \cite[Proof of Theorem 14.22]{vdV}. 
Fix $\epsilon\in (0,\epsilon^*)$, and $\theta\in \Theta_1$, with
$|\theta|<\theta_\epsilon^*$. By \eqref{eq:U-statistic:ungl+} and \eqref{eq:U-statistic:ungl-} 
we have that  
\begin{equation*}
	\limsup_{n\to\infty}\,-\frac{1}{n}\log L_{T^A,n}\,\le\,c_{A,-,\epsilon}(\theta)\ \text{ and }\ 
	\liminf_{n\to\infty}\,-\frac{1}{n}\log L_{T^A,n}\,\ge\,c_{A,+,\epsilon}(\theta)
\end{equation*}
with $P_\theta$-probability 1. As
\begin{equation*}
 P_\theta(L_{T^A,n}\le \alpha)\,
        =\,P_\theta\Bigl(-\frac{1}{n}\log L_{T^A,n}\ge -\frac{1}{n}\log \alpha \Bigr)
            \begin{cases}>\beta~\text{if } n=N_{T^A}(\theta,\alpha,\beta),\\
                                       \le \beta~\text{if } n=N_{T^A}(\theta,\alpha,\beta)-1,
              \end{cases}
\end{equation*}
we deduce that
\begin{equation*}
 \limsup_{\alpha\downarrow 0}\,-\frac{1}{N_{T^A}(\alpha,\beta,\theta)}\log \alpha\,\le\,
  c_{A,-,\epsilon}(\theta)
\end{equation*}
and
\begin{equation*}
 \liminf_{\alpha\downarrow 0}\,-\frac{1}{N_{T^A}(\alpha,\beta,\theta)-1}\log \alpha\,\ge\,
 c_{A,+,\epsilon}(\theta).
\end{equation*}
Finally, recall that $\lim_{\alpha\downarrow 0} {N_{T^A}(\alpha,\beta,\theta)}=\infty$.
\end{proof}

On the basis of the above we now obtain the main result of this section. 
For $\epsilon\in (0,\epsilon^*)$, and $A\in \{B,F,\DE\}$, let $\theta_{A,\epsilon}:=\theta_A$, with $\theta_A$ as
defined as in Proposition \ref{prop:slope:BF}, and put  
\begin{equation*}  c_{A,\pm,\epsilon}(\theta):=c_A(\theta),\quad \lambda_{A,\pm,1,\epsilon}:=\lambda_{A,1},
\end{equation*}
with $c_A(\theta)$ as in \eqref{def:slope:BF}.  Moreover, for $A\in \{B,C,D,E,F,\DE\}$, with $K(A)$ the associated 
Hilbert--Schmidt operator, and $q$ as defined in~\eqref{eq:localt1} encoding the direction of the alternatives,
let 
\begin{equation}\label{eq:defkappa}
    \kappa(K(A),q ):= \frac{1}{\lambda_{A,1}}\bigl\langle K(A) q ,q \bigr\rangle.
\end{equation}
Note that this value is invariant under scale transformations $T^A\mapsto \alpha T^A$, $\alpha>0$, 
of the underlying statistic.   

\begin{theorem}\label{final:loc:ex:Ba}
Let $A,A'\in \{B,C,D,E,F,\DE\}$. Then, for all $\epsilon\in (0,\epsilon^*)$, $\theta\in \Theta_1$, $|\theta|<\min(\theta_{\epsilon}^*,\theta_B,\theta_F)$, the bounds 
\begin{equation*}
\frac{c_{A',+,\epsilon}(\theta)}{c_{A,-,\epsilon}(\theta)}\,\le\,{\rm eff}_{T^{A'},T^A}^{\bam}(\beta,\theta)\,\le \,
{\rm eff}_{T^{A'},T^A}^{\bap}(\beta,\theta)\,\le\,\frac{c_{A',-,\epsilon}(\theta)}{c_{A,+,\epsilon}(\theta)}
\end{equation*}
apply, and the local exact Bahadur efficiency of $T^{A'}$ with respect to  $T^{A}$ is given by
\begin{equation}\label{eq:locexBah}
       {\rm eff}_{T^{A'},T^A}^{\ba}\,=\,\frac{\kappa(K(A'),q )}{\kappa(K(A),q )}. 
\end{equation}
\end{theorem}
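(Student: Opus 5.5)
The plan has two parts: first the two-sided bounds for fixed $\theta$, then the limit $\theta\to 0$ with $\epsilon$ sent to $0$ afterwards.

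\emph{Fixed $\theta$.} I combine Propositions~\ref{prop:slope:BF} and~\ref{prop:slope:CDE}. For $A\in\{B,F,\DE\}$ the conventions $c_{A,\pm,\epsilon}=c_A$ make Proposition~\ref{prop:slope:BF} a special case of the two-sided statement in Proposition~\ref{prop:slope:CDE}. So for every $A$ in the family and every admissible $\theta$,
\begin{equation*}
 c_{A,+,\epsilon}(\theta)\le\liminf_{\alpha\downarrow 0}\frac{-\log\alpha}{N_{T^A}(\alpha,\beta,\theta)}\le\limsup_{\alpha\downarrow 0}\frac{-\log\alpha}{N_{T^A}(\alpha,\beta,\theta)}\le c_{A,-,\epsilon}(\theta).
\end{equation*}
Write $N_{T^A}/N_{T^{A'}}$ as the quotient of $-\log\alpha/N_{T^{A'}}$ by $-\log\alpha/N_{T^A}$, and use $\liminf(x/y)\ge \liminf x/\limsup y$ (all quantities positive) and the analogous limsup bound. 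This gives the displayed chain of inequalities for ${\rm eff}^{\bam}$ and ${\rm eff}^{\bap}$. The restriction $|\theta|<\min(\theta^*_\epsilon,\theta_B,\theta_F)$ guarantees that every slope bound is available simultaneously; $\theta_{\DE}$ should also enter, and I would silently include it.

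\emph{Local limit.} I expand the bounds as $\theta\to 0$. By \eqref{eq:Ld:TA:H1}, $b_A(\theta)=6\theta^2\langle K(A)q,q\rangle+o(\theta^2)$, so $b_A(\theta)(1\pm b_A(\theta))=b_A(\theta)(1+O(\theta^2))$. Since $d_{A,\pm,\epsilon}(s)=s^2/(6\lambda_{A,\pm,1,\epsilon})+o(s^2)$ with $\binom42=6$, it follows that
\begin{equation*}
 c_{A,\pm,\epsilon}(\theta)=\theta^2\,\frac{\langle K(A)q,q\rangle}{\lambda_{A,\pm,1,\epsilon}}+o(\theta^2).
\end{equation*}
Here $\lambda_{A,+,1,\epsilon}=\lambda_{A,1}+\epsilon$ and $\lambda_{A,-,1,\epsilon}=\lambda_{A,1}$ for $A\in\{C,D,E\}$, and both equal $\lambda_{A,1}$ otherwise.

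\emph{Main obstacle.} The quadratic term needs $\langle K(A)q,q\rangle>0$ so that the ratios are well defined. If $\langle K(A)q,q\rangle=0$ for some direction $q$, then \eqref{eq:locexBah} degenerates. I would assume $q$ has a nonzero component outside the kernel of $K(A)$, as is implicit in the statement, since $K(A)$ is positive. With that, dividing and letting $\theta\to0$ yields
\begin{equation*}
 \frac{\langle K(A')q,q\rangle}{\lambda_{A',1}+\epsilon}\cdot\frac{\lambda_{A,1}}{\langle K(A)q,q\rangle}\le\liminf_{\theta\to0}{\rm eff}^{\bam}\le\limsup_{\theta\to0}{\rm eff}^{\bap}\le \frac{\langle K(A')q,q\rangle}{\lambda_{A',1}}\cdot\frac{\lambda_{A,1}+\epsilon}{\langle K(A)q,q\rangle}.
\end{equation*}
The left-hand side of the $\theta$-limit does not depend on $\epsilon$, so I let $\epsilon\downarrow0$. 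Both the $\bam$ and the $\bap$ limits then exist and equal $\kappa(K(A'),q)/\kappa(K(A),q)$, which is \eqref{eq:locexBah}.

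The delicate points are the correct uniformity of the remainders, handled by the $o(s^2)$ expansion of the analytic $d$'s, and the order of limits: first $\theta\to0$ at fixed $\epsilon$, then $\epsilon\to0$.
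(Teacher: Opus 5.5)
Your proposal is correct and follows the paper's proof. You get the fixed-$\theta$ bounds by combining Propositions~\ref{prop:slope:BF} and~\ref{prop:slope:CDE}, expand the slopes via \eqref{eq:Ld:TA:H1} and the quadratic leading terms of $d_{A,\pm,\epsilon}$, let $\theta\to0$ at fixed $\epsilon$, and then let $\epsilon\downarrow0$.

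Your two side remarks are accurate refinements of points the paper leaves implicit. First, $\theta_{\DE}$ should also enter the admissible range of $\theta$. Second, $\langle K(A)q,q\rangle>0$ is needed, but it holds automatically: $q$ is the $L^2$-limit of the $q_\theta$ of copula densities, so both of its marginal integrals vanish, and $K(A)$ is injective on that subspace.
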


\begin{proof}
The first statement is a consequence of Proposition \ref{prop:slope:BF} 
and Proposition \ref{prop:slope:CDE}. Using \eqref{eq:Ld:TA:H1} and the power series 
expansion of the analytic functions $d_{A,\pm,\epsilon}$ we get that
\begin{equation*}
 \limsup_{\theta\to 0} {\rm eff}_{T^{A'},T^A}^{\bap}(\beta,\theta)\,
                \le\,\lim_{\theta\to 0}\frac{c_{A',-,\epsilon}(\theta)}{c_{A,+,\epsilon}(\theta)}\,
                 =\,\frac{\lambda_{A,+,1,\epsilon}\langle K(A')q ,q \rangle}
                                                {\lambda_{A',-,1,\epsilon}\langle K(A)q ,q \rangle}
\end{equation*}
and
\begin{equation*}
  \liminf_{\theta\to 0} {\rm eff}_{T^{A'},T^A}^{\bam}(\beta,\theta)\,
              \ge\,  \lim_{\theta\to 0}\frac{c_{A',+,\epsilon}(\theta)}{c_{A,-,\epsilon}(\theta)}\,
                =\, \frac{\lambda_{A,-,1,\epsilon}\langle K(A')q ,q \rangle}
                                              {\lambda_{A',+,1,\epsilon}\langle K(A)q ,q \rangle}. 
\end{equation*}
for all $\epsilon \in (0,\epsilon^*)$. Let $\epsilon\downarrow 0$ to obtain the second statement.    
\end{proof}  

\begin{example}\label{ex:FGM}
We consider the special case where $\Theta =[-1,1]$ and $\{P_\theta:\theta \in\Theta \}$ is the
Farlie--Gumbel--Morgenstern (FGM) distribution family on the unit square, with copula 
\begin{equation}\label{eq:defFGM}
     C_\theta^{\rm FGM}(x,y) \; =\; xy\, +\, \theta x y (1-x)(1-y), \quad 0\le x,y\le 1,
\end{equation}
see~\cite{Farlie,Gumbel,Morgenstern}. If $\theta=0$ then the components of a two-dimensional
random vector with this distribution function are independent. Here,
\begin{equation*}
    q_\theta=\frac{dP_\theta}{dP_0} \,=\, 1 + \theta q^{\rm FGM}, 
\end{equation*}
with $q^{\rm FGM}(x,y)=(2x-1)(2y-1)$ for $x,y \in [0,1]$. Table \ref{Tab:EffFGM} shows the local 
exact Bahadur efficiency ${\rm eff}_{T^{A},T^B}^{\ba}$ of $T^A$ with respect to $T^B$,
$A\in\{B,C,D,E,F,\DE\}$, obtained from
\begin{equation*}
  \bigl \langle K(B)q^{\rm FGM} ,q^{\rm FGM} \bigr \rangle \,=\,\frac{I_1^2}{6}\, ,
  \bigl \langle K(C)q^{\rm FGM} ,q^{\rm FGM} \bigr \rangle    \,=\,\frac{I_2^2}{6}\,,
\end{equation*}
\begin{equation*}
  \bigl \langle K(D)q^{\rm FGM} ,q^{\rm FGM} \bigr \rangle= \bigl \langle K(E)q^{\rm FGM} ,q^{\rm FGM}
  \bigr \rangle =\frac{I_1I_2}{6},\  
 \bigl \langle K(F)q^{\rm FGM} ,q^{\rm FGM} \bigr \rangle=\frac{I_3^3}{2},
\end{equation*}
\begin{equation*}  
  \bigl \langle K(\DE)q^{\rm FGM} ,q^{\rm FGM} \bigr \rangle= \bigl \langle K(D)q^{\rm FGM} ,q^{\rm FGM}
  \bigr \rangle +
  \bigl \langle K(E)q^{\rm FGM} ,q^{\rm FGM} \bigr \rangle = \frac{I_1I_2}{3},
\end{equation*}  
where the integrals   
\begin{equation*}
  I_i\,=\,\int_0^1\int_0^1(2u-1)g_i(u,v)(2v-1)\,du\,dv,\quad i=1,2,3, 
\end{equation*}  
evaluate to $I_1=\frac{1}{5}$, $I_2=\frac{1}{15}$ and $I_3=\frac{4}{45}$.
For $T^F$ and $T^{\DE}$ the respective smallest roots of the functions $\omega$ and $\omega_{DE}$
defined in \eqref{eq:omega} and \eqref{eq:omegaDE} were calculated numerically to be approximately 3.492799533553498
and 3.091933991647879.
This leads to the values shown in the first row and the last two columns of Table \ref{Tab:EffFGM}.    
\brend
\end{example}

\begin{table}    
\begin{tabular}{c C{1cm}C{1cm}C{1cm}C{1cm}C{1cm}C{1cm}}
    \noalign{\vspace{2mm}}
    & $T^B$ & $T^C$ & $T^D$ & $T^E$ & $T^F$ & $T^{\DE}$ \\
    \noalign{\vspace{1mm}}
    \hline
    \noalign{\vspace{2mm}}
    ${\rm eff}_{T^{A},T^B}^{\ba}$ & 1 & $\frac{4}{9}$ & $\frac{2}{3}$ & $\frac{2}{3}$ & 0.8906 & 0.7618 \\
    \noalign{\vspace{2mm}}
    ${\rm eff}_{T^{A},T^C}^{\ba}$ &$\frac{1}{16}$ & 1 & $\frac{1}{4}$ & $\frac{1}{4}$ &  0.2818 & 0.2857 
\end{tabular}  
\vspace{3mm}

\caption{Local exact Bahadur efficiencies of the $T^A$ test, $A\in \{B,C,D,E,F,\DE\}$, with respect 
	to the BDY test $T^B$ and direction $q^{\rm FGM}$ (first line), and with respect to $T^C$ and 
	direction $q ^C$ (second line)}
\label{Tab:EffFGM}
\end{table}  
The local exact Bahadur efficiencies shown in the first line of Table \ref{Tab:EffFGM}
for the FGM family also apply to the copula families of Ali--Mikhail--Haq, Plackett, and Frank, 
given respectively by 
\begin{equation*}
  C_\theta^{\rm AMH}(x,y)\,=\,\frac{xy}{1-\theta (1-x)(1-y)},\quad -1\le \theta\le 1, 
\end{equation*}
\begin{equation*}
  C_\theta^{\rm P}(x,y)\,=\, \frac{1}{2\theta}\biggl(1+\theta (x+y) -
      \Bigl(\bigl (1+\theta (x+y)\bigr)^2 - 4xy\theta (\theta + 1)\Bigr)^{\nf{1}{2}}\biggr),
\end{equation*}
if $\theta \in (-1,\infty)\setminus \{0\}$, and $ C_0^{\rm P}(x,y)=xy$, 
\begin{equation*}
  C_\theta^{\rm F}(x,y)\,=\,
            \begin{cases} -\frac{1}{\theta} \log\left (1 +\frac{(e^{-\theta x}-1)(e^{-\theta y} -    
            		                                                        1)}{e^{-\theta}-1}\right ), &\text{if }\,\theta\neq 0.\\
                                        \quad xy,&\text{if }\,\theta = 0.
            \end{cases}
\end{equation*}
In fact, expanding $C_\theta^{\rm AMH}$, $C_\theta^{\rm P}$, and $C_\theta^{\rm F}$ in a Taylor series in powers of
$\theta$, the sum of the first two terms for $C_\theta^{\rm AMH}$ and $C_\theta^{\rm P}$ is
\begin{equation*}
  xy + \theta xy(1-x)(1-y),
\end{equation*}
and the sum of the first two terms for $C_\theta^{\rm F}$ is
\begin{equation*}
  xy + \frac{\theta}{2} xy(1-x)(1-y),
\end{equation*}
see \cite[pp.\,100,\,133]{Nelson}.

Other copula families also lead to factorizing alternatives. For example, for the generalized
Farlie--Gumbel--Morgenstern (GFGM) family
\begin{equation*}
  C_\theta(u,v)=uv\bigl (1+\theta(1-u^2)(1-v^2)\bigr )^2,\quad 0\le u,v\le 1, 
\end{equation*}
with $-\frac{1}{8}\le \theta \le \frac{1}{4}$, see \cite{BekParZad}, we get the direction
$q (u,w)=(1-3u^2)(1-3v^2)$, $0\le u,v\le 1$.
Moreover, the above factorizations may be expressed as the direction being a tensor 
of rank~1 and, at least in principle, calculations as in Example~\ref{ex:FGM} can be extended 
to directions that are tensors of finite degree. 

The theorem can also be used to show that none of the tests uniformly dominates the others. 
In fact, it is easy to see that $\kappa(K(A),q )\le 1$ and that equality holds if and only if $q $ is an 
element of the eigenspace associated with the largest eigenvalue of $K(A)$. As the latter 
are all different for the six $T^A$ tests the direction $q $ can always be chosen such that 
the ratio in~\eqref{eq:locexBah} is strictly less than 1. In Example~\ref{ex:FGM} the BDY test 
dominates all others, but with the direction 
\begin{equation}\label{eq:optCopC}
	 q ^C(u,v) := 2\cos(2\pi u)\cos(2\pi v),\quad 0\le u,v \le 1,
\end{equation}
i.e. an eigenfunction associated with the largest eigenvalue of $K(C)$,
we get the local exact Bahadur efficiencies in the second line of Table~\ref{Tab:EffFGM}:
For this direction the test $T^C$ dominates the others; see also Section \ref{sec:simul} for 
related simulation results. Likewise, for the direction 
\begin{equation*}
	 q ^B(u,v) := 2\cos(\pi u)\cos(\pi v),\quad 0\le u,v \le 1,
\end{equation*}
the eigenfunction associated with the largest eigenvalue of $K(B)$, the test BDY test
dominates the others.
Figure~\ref{fig:dir} shows the functions $q^B$ and $q ^C$ as gray plots on the unit square, 
with white and black corresponding to the smallest and largest values respectively. This may 
provide an idea of the respective alternatives that $T^B$ and $T^A$ are most sensitive to.  

\begin{figure}
\begin{center}
\setlength{\abovecaptionskip}{-0.25cm}
\includegraphics[scale=.42]{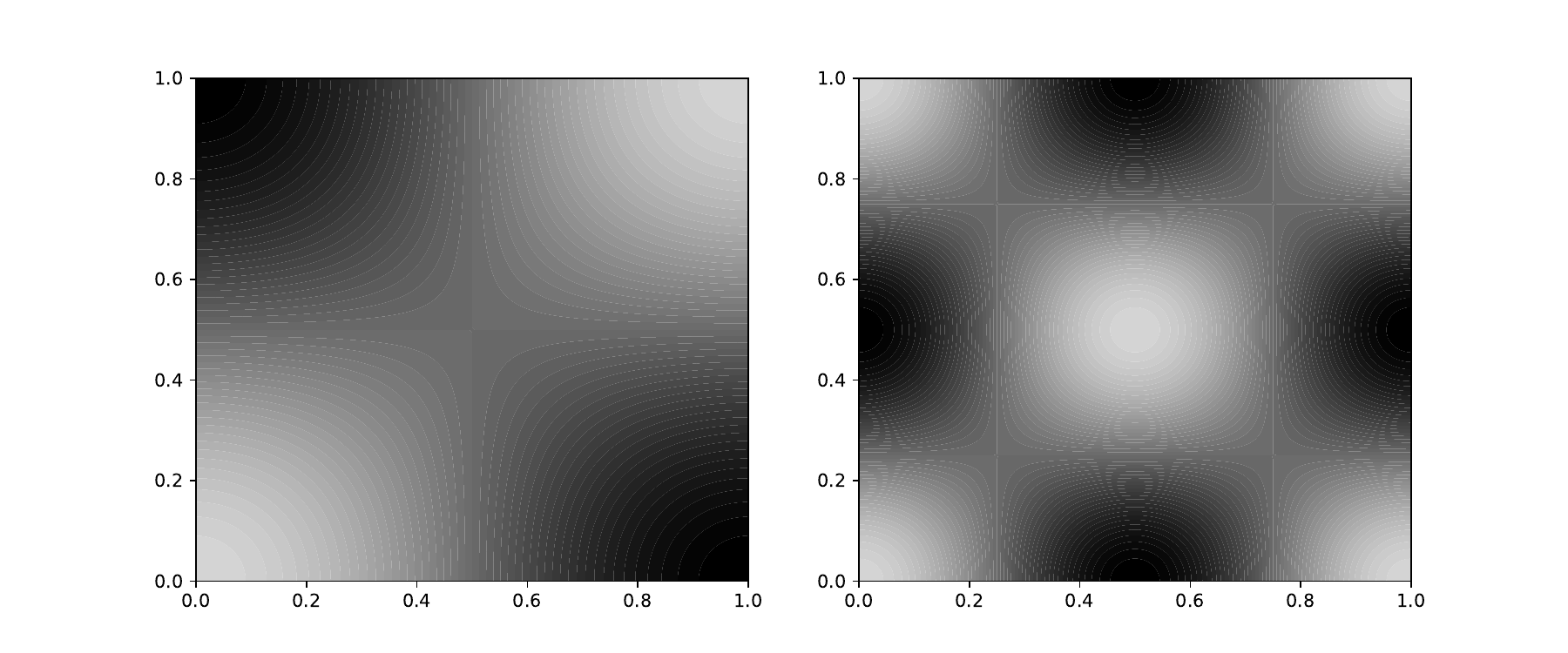}
\vspace{2mm}
\caption{Gray plots of $q^B$ (left) and $q^C$ (right)}
\label{fig:dir}
\end{center}
\end{figure}

\subsection{Other  efficiency concepts}\label{subsec:alteff} 
When dealing with Bahadur efficiencies the most difficult task often is the derivation 
of a suitable large deviation result for the test statistics under consideration.
To circumvent this problem, Bahadur \cite{Bahadur} introduced and discussed a related concept of approximate efficiency, where the large deviation statement is replaced by a condition on the exponential tail
behavior of the limiting null distributions. The \emph{approximate Bahadur efficiencies} obtained are 
ratios of the \emph{approximate slopes} of the respective sequences of test statistics; in typical cases, 
these slopes are calculated easily.   For a precise description of this approach we refer to \cite{Bahadur} 
and \cite{GroeneOoster}. Despite its disadvantages, see~\cite{GroeneOoster}, the concept
is still widely used as its local variant, the \emph{local approximate Bahadur efficiency}
${\rm eff}_{T_2,T_1}^{\ba^*}$ of a sequence $T_2$ with respect to a sequence $T_1$, often coincides with the
local exact Bahadur efficiency. This is easily seen to be true for the $T^A$ tests, where the
approximate slopes are given by $b_A(\theta)/\lambda_{A,1}$. In fact, using \eqref{eq:Ld:TA:H1} it follows that 
${\rm eff}_{T^{A'},T^A}^{\ba^*}:=\lim_{\theta\to 0}\frac{b_{A'}(\theta)/\lambda_{A',1}}{b_A(\theta)/\lambda_{A,1}}\,=\, 
\frac{\kappa(K(A'),q )}{\kappa(K(A),q )}={\rm eff}_{T^{A'},T^A}^{\ba}$.
It is also well-known that local approximate Bahadur efficiencies often coincide with a local variant of
Pitman efficiencies. Recall the situation at the beginning of Section \ref{subsec:baha}.
For given $0<\alpha<\beta<1$ the \emph{Pitman asymptotic relative efficiency} (ARE) of $T_2$ with respect to $T_1$
is defined to be 
${\rm eff}_{T_2,T_1}^{\pit}(\alpha,\beta):=\lim_{\theta\to \theta_0}\, \frac{N_{T_1}(\alpha,\beta,\theta)}{N_{T_2}(\alpha,\beta,\theta)}$,
if the limit exists; if the latter is not true, proceed analogously to the Bahadur efficiency approach. Call
${\rm eff}_{T_2,T_1}^{\pit}:=\lim_{\alpha \to 0}\, \frac{N_{T_1}(\alpha,\beta,\theta)}{N_{T_2}(\alpha,\beta,\theta)}$
the \emph{limiting Pitman efficiency} of $T_2$ with respect to $T_1$, if the limit exists. 
By a general theorem of Wieand \cite{Wieand} this is true if subject to a specific condition,
now called Wieand's condition ${\rm III}^*$, the local approximate Bahadur efficiency exists. Moreover, then
${\rm eff}_{T_2,T_1}^{\pit}={\rm eff}_{T_2,T_1}^{\ba^*}$; see \cite{Hoermann} and \cite{KallKon} for more recent work
and various examples. For sequences of quadratic tests, i.e.
tests based on quadratic $U$-statistics with degenerate kernel of rank 2 in the hypothesis case,
Gregory \cite[Theorem~3.1]{Gregory80} deals with properties of the
respective kernels ensuring that Wieand's condition ${\rm III}^*$ is satisfied. The $T^A$ are not quadratic tests,
but we may argue similarly, using Hoeffding's inequality for $U$-statistics with bounded kernels, see e.g.
\cite[Section\,5.6, Theorem A]{Serfling}. 
Finally, we also deal with an efficiency concept proposed by
Gregory~\cite{Gregory80} that is directly based on the power performance 
of the tests under special local alternatives; for this, the sample sizes $N_{T_i}(\alpha,\beta,\theta)$ are not needed.

We use the notation and the assumptions given above in~\eqref{eq:Ustat} - \eqref{eq:Ustatconv}, with
$P=P_{\theta_0}$ the distribution that specifies the null hypothesis 
$H_0$ of the testing problem considered at the beginning of Section \ref{subsec:baha}.
We concentrate on the comparison of asymptotically quadratic tests:
Given a sequence $T = (T_{n})_{n\in \bN}$ of tests (test statistics) we assume that there is a
sequence $U_{h_2} =(U_{h_2,n})_{n\ge 2}$ of
quadratic $U$-statistics $U_{h_2,n}=U_{h_2,n}(Z_1,\ldots,Z_n)$, with degenerate kernel $h=h_2$ of
rank 2 in the hypothesis case, such that, as $n\to\infty$,
\begin{equation}\label{eq:equiv:T:qua}
 n T_{n}(Z_1,\ldots,Z_n)\,-\,n U_{h_2,n}(Z_1,\ldots,Z_n)\,\to\,0\quad\text{in}\ P\text{-probability}.  
\end{equation}
These conditions are satisfied for the tests $T^A$, $A\in\{B,C,D,E,F,\DE\}$. 

Let $(P_n)_{n\in\bN}$ be a sequence of alternative distributions; we now assume that the sequence is local in the sense that
$P_n$ has a $P$-density $p_n$ that can be written as
\begin{equation}\label{eq:localt2}
    p_n = 1 + n^{-1/2}q_n,\,\text{ with }\, q_n\to q  \text{ for some  }q \in L^2=L^2(E,\cF,P),\, q\neq 0. 
  \end{equation}
Again, let $(\lambda_i,\phi_i)_{i\ge 1}$ be the spectral structure of the 
Hilbert--Schmidt operator $K$ associated with $h_2$. Gregory \cite{Gregory} showed that, 
under these assumptions and in extension of~\eqref{eq:Ustatconv},
\begin{equation*}
     \cL \left (nU_{h_2,n}(Z_{1},\ldots,Z_{n})\big|P_n\right )\toweak \mu(K,q ),
\end{equation*}
where the distribution $\mu(K,q )$ now has the representation 
\begin{equation}\label{eq:repr_alt0}
  \mu(K,q )\,=\,\cL\Bigl(\sum_{j \ge 1} \lambda_j \bigl((\xi_j+a_j)^2-1\bigr)\Bigr).
\end{equation} 
In~\eqref{eq:repr_alt0} the shifts are given by 
$a_j=\langle \varphi_j,q \rangle = \int \varphi_j q \,dP$, $j\in\bN$,
and the $\xi$-variables are again independent standard normals.
From~\eqref{eq:equiv:T:qua} and the fact that the sequence
$\bigl (P_n^n\bigr)_{n\in\bN}$ of $n$-fold product measures of $P_n$ is contiguous
to the sequence $(P^n)_{n\in\bN}$ of $n$-fold product measures of $P$ it follows that
\begin{equation*}
     \cL \left (nT_n(Z_{1},\ldots,Z_{n})\big|P_n\right )\toweak \mu(K,q ).
\end{equation*}
Extending the definition of $\kappa(K(A),q )$ given for $K(A)$, $A\in \{B,C,D,E,F,\DE\}$
in~\eqref{eq:defkappa}, let $\kappa(K,q ):=\frac{\langle Kq,q\rangle }{\lambda_1}$, with $\lambda_1$ the
largest eigenvalue of $K$.
Let $T^\circ=(T^\circ_n)_{n\in\bN}$ be some other sequence  of asymptotically quadratic
tests (test statistics) with Hilbert--Schmidt operator $K^\circ$. Then, following Gregory \cite{Gregory80} the ratio
\begin{equation*}
  {\rm eff}_{T_2,T_1}^{\gr}\,:=\,\kappa(T^\circ,q )/\kappa(T,q )
\end{equation*}
may be chosen as an alternative efficiency measure. 
Connecting the conditions~\eqref{eq:localt1} and~\eqref{eq:localt2} via
$q_n=q =q_\theta$ for all $n\in\bN$ and $\theta=\theta_n= n^{-1/2}$
we obtain in our situation that
\begin{equation}\label{eq:effsequal}
  {\rm eff}_{T^{A'},T^A}^{\ba}={\rm eff}_{T^{A'},T^A}^{\ba^*}={\rm eff}_{T^{'},T^A}^{\pit}={\rm eff}_{T^{A'},T^A}^{\gr}
\end{equation}
for $A,A'\in \{B,C,D,E,F,\DE\}$.

The quadratic statistics can also be related to Gaussian processes; see Section~\ref{subsec:processes}. 
With the Kac-Siegert representation~\eqref{eq:KacSiegert} of $X$ and 
\begin{equation*}
   R:\bH\to\bH,\quad 
     R f = \sum_{j\ge 1} \sqrt{\lambda_j}\,\langle  f,\varphi_j\rangle\, \varphi_j,
\end{equation*}
the `square root' of $K$ we obtain
\begin{equation}\label{eq:shift}
 \sum_{j\ge 1} \lambda_j(\xi_j+a_j)^2  \, \eqdistr\,  \|X+Rq\|^2.
\end{equation}
Note that we may then rewrite~\eqref{eq:defkappa} as $\kappa(K,q)=\|Rq\|^2/\lambda_1$.
This approach makes it possible to obtain, for a specific test,  
directions for the alternatives with the property that the limiting power is maximal. 
As shown in~\cite[Theorem 2.2]{Neuhaus} this holds with $q$ a multiple of any eigenfunction
$\varphi_1$ associated with the largest eigenvalue $\lambda_1$ of~$K$ (or, equivalently,
$\sqrt{\lambda_1}$ of $R$), which agrees with our findings at the end of Section~\ref{subsec:baha}

Gaussian processes can further be used in connection with local power functions of the tests; 
see~\cite[Section 2]{Neuhaus} for details and proofs. Instead  of~\eqref{eq:localt1}
or~\eqref{eq:localt2} we now consider local parametric families 
$\cP_n=\{P_{n,\theta}:\, \theta\in\Theta\}$, $n\in\bN$,  where $P_{n,\theta}$ denotes the 
distribution that has density $1+n^{-1/2}\theta q$ with respect to the uniform distribution 
distribution $P=P_0$ on the unit square. Here $q\in\bH_2$ is a fixed direction, 
$\Theta\subset \bR$ is an interval with $0$ as an interior point and 
such that $P_{n,\theta}$ defines a copula for all $n\in\bN$ 
and $\theta\in\Theta$.  We may in fact assume, reparametrizing if necessary, that $\|q\|=1$.
For the moment we ignore the dependence on $T^A$ and $q$ and write $\beta_n$ for the
power function of the $T_n^A$ test, $A\in\{B,C,D,E,F,\DE\}$.

Let $X$ be the centered Gaussian process given by the Kac--Siegert representation associated with
the operator $K(A)$. Regard $X$ as an $\bH_2$-valued random variable defined on some
background probability space with distribution
$\nu_0:=\cL(X)$, and let $c(\alpha)$ be the upper $\alpha$-quantile of $\|X\|^2$.  
It follows from~\eqref{eq:shift} that
\begin{equation}\label{eq:aspower}
 \lim_{n\to\infty}\beta_n(\theta)\,
               = \, \beta(\theta):= P(\|X+\theta Rq\|^2\ge c(\alpha)) 
               = \nu_\theta\bigl(\{h\in\bH_2:\, \|h\|^2\ge c(\alpha)\}\bigr),
\end{equation}
where $\nu_\theta$ denotes the distribution of $X+\theta Rq$.
The linear mapping 
\begin{equation*}
    Z_q:\bH_2\to \bR, \quad h\, \mapsto \, \sum_{j\ge 1} \frac{1}{\lambda_j} 
                          \langle Rq,\varphi_j\rangle \langle h,\varphi_j\rangle,
\end{equation*}
defines a random variable on the probability space $(\bH_2,\cB(\bH_2),\nu_0)$.
Linearity and $\cL(X)=\nu_0$ together imply that $Z_q$  has a 
centered normal distribution, and it is straightforward to check that its
variance is  $\|q\|^2=1$. Further, the shift of the Gaussian process $X$ by the deterministic
amount $\theta Rq$ leads to a formula for the respective densities,
 \begin{equation*}
 \frac{d\nu_\theta}{d\nu_0}(h) = \exp\bigl(\theta Z_q(h) - \theta^2/2\bigr),\quad h\in\bH_2.
\end{equation*}  
Taken together this shows that $\bigl(\bH_2,\cB(\bH_2),\{\nu_\theta:\, \theta\in\bR\}\bigr)$
is a Gaussian shift experiment, and~\eqref{eq:aspower} now leads to 
\begin{equation*}
  \beta(\theta) \, = \, \int_{\|h\|^2\ge c(\alpha)}
                            \exp\bigl(\theta Z_q(h)-\theta^2/2\bigr)\, \nu_0(dh).
\end{equation*} 
We have $\beta(-\theta)=\beta(\theta)$ and $\beta(0)=\alpha$, $\beta(\theta)>\alpha$ for
$\theta\not=0$. A quantitative measure of the performance of the test may thus be based on 
the curvature at the hypothesis, i.e.\ the second derivative 
of its power function at $\theta=0$, which turns out to be \cite[Lemma 2.5]{Neuhaus}
\begin{equation*}
  \frac{d^2}{d\theta^2}\beta(\theta)\big|_{\theta=0}
      \, = \, \int_{\| h \|^2\ge c(\alpha)} Z_q(h)^2\, \nu_0(d h)  \, -\, \alpha.
\end{equation*}  
Note that, in contrast to the previous efficiency assessments, this depends on the level $\alpha$.
 
These curvatures can be used to compare the different $T^A$ tests, 
for different directions $q$ and a fixed level $\alpha$, but we will not pursue this here. 
An  interesting alternative to simulation is the use of integral transforms, which is also 
applicable in connection with the limiting null distributions in Remark~\ref{rem:repr2};  
see~\cite{GenQueRem} for a related approach in the context of the Hoeffding--Blum--Kiefer--Rosenblatt test of independence. 
\section{Implementation aspects and simulations}\label{sec:simul}
For computationally intensive procedures a Monte Carlo approach is the somewhat canonical choice,
adding an approximation error (that is under control of the statistician) to the inherent stochastic 
error.  This is indeed suggested in~\cite{BergsmaDassios} and it seems to be the only reasonable 
method for longer patterns; see also~\cite[Section 6.1]{BaGr}.  For patterns of length four, as in 
the present case, we may avoid the approximation error by going through all subsets with four elements. 
This would require a run-time of order $n^4$ and is thus feasible only for very small values $n$ 
of the sample size. 
For our simulations below we used an ad hoc algorithm that reduces this to $n^3$. 
For an in-depth treatment of the computational complexity
we refer to the remarkable paper~\cite{EZLeng} where the authors introduce a data structure that leads
to an algorithm of rough (up to logarithmic factors) order $n^{3/2}$. In many cases,  
including the BDY statistic, it is even of roughly linear order.   

In order to obtain an impression of the power performance of the $T^A$ tests at a given 
level and for a finite sample size we present the result of
a simulation study.  

For sample size $n\in \{50,100\}$ and significance level $\alpha = 0.05$
we obtained the empirical power values of the tests by simulation with $10000$ replications,
and for alternative distributions taken from four different
copula families: (a) the FGM copula family \eqref{eq:defFGM}; (b) the Clayton copula family,
\begin{equation*}
  C_\kappa^{\rm Clayton}(u,v)\,
                  =\,\left (\max\left (u^{-\kappa}+v^{-\kappa} - 1,0\right )\right )^{-1/\kappa},
                    \ \ 0\le u,v\le 1,
\end{equation*}
with dependence parameter $\kappa\in [-1,\infty) \setminus \{0\}$; (c) the Gaussian copula family,
\begin{equation*}
  C_\rho^{\rm Gauss}(u,v)\,
                =\,\frac{1}{2\pi (1-\rho^2)}\int_{-\infty}^{\Phi^{-1}(u)}\int_{-\infty}^{\Phi^{-1}(v)}
                            \exp\Bigl (-\frac{s^2-2\rho st + t^2}{2(1-\rho^2)}\Bigr )\,ds \,dt, 
\end{equation*}
$0\le u,v\le 1$, with dependence parameter $\rho\in (-1,1)$ and $\Phi$ the 
distribution function of the standard normal distribution; and (d) the family 
\begin{equation*}
	C^{\rm opt,C}_\theta(u,v) \,=\,uv\,+\,\theta \frac{1}{2\pi^2}\sin(2\pi u)\sin(2\pi v),\quad 0\le u,v \le 1,
\end{equation*}
with dependence parameter $\theta\in [-1/2,1/2]$ arising from the direction $q^C$  given in~\eqref{eq:optCopC}.
The empirical upper $\alpha$ quantiles of the test statistics, obtained by
simulation with 100,000 replications in the case where the hypothesis is true, were chosen as
critical values (and as approximations of the true values $t_{n,\alpha}^A$).  The empirical power 
values are shown in Table \ref{tab:power}. Roughly, these reflect the efficiency results obtained 
in Section~\ref{sec:eff}; see Table~\ref{Tab:EffFGM}.

\begin{acks}[Acknowledgments]
  We thank Editor Arnak Dalalyan and two anonymous reviewers for constructive comments and suggestions.
  We thank Jon Noel for bringing the reference~\cite{CDN} to our attention.
\end{acks}

\begin{table}[p] \setlength{\tabcolsep}{1.45mm}
	\caption{Empirical power values for FGM copulas (a), Clayton copulas (b), Gaussian copulas (c), \\ and
		the $C^{\rm opt,C}$ copulas (d)}\label{tab:power}
{\small
	\begin{tabular}{ccccccccccccccc}
		\noalign{\vspace{3mm}}
		& \multicolumn{6}{c}{$n=50$} && \multicolumn{6}{c}{$n=100$}\\
		\noalign{\vspace{2mm}}
		$\theta$ && $T^B$ &  $T^C$ &  $T^F$ & $T^D$ & $T^E$ &  $T^{\DE}$ &&  $T^B$ &  $T^C$ &  $T^F$ & $T^D$ & $T^E$ &  $T^{\DE}$\\
		\noalign{\vspace{1mm}}
		$\nf{1}{10}$  && 0.06 & 0.05 & 0.05 & 0.05 & 0.05 & 0.05 && 0.06 & 0.05 & 0.06 & 0.05 & 0.06 & 0.06 \\
		$\nf{1}{4}$   && 0.08 & 0.05 & 0.07 & 0.06 & 0.06 & 0.07 && 0.13 & 0.06 & 0.10 & 0.08 & 0.08 & 0.09 \\ 
		$\nf{1}{2}$   && 0.19 & 0.07 & 0.16 & 0.11 & 0.11 & 0.12 && 0.36 & 0.11 & 0.28 & 0.19 & 0.19 & 0.22 \\                        
        $\nf{3}{4}$   && 0.39 & 0.11 & 0.30 & 0.20 & 0.20 & 0.24 && 0.69 & 0.19 & 0.57 & 0.39 & 0.39 & 0.45 \\                        $\nf{9}{10}$  && 0.54 & 0.14 & 0.43 & 0.29 & 0.28 & 0.34 && 0.84 & 0.27 & 0.73 & 0.54 & 0.55 & 0.61
	\end{tabular}
	
	\vspace{2mm}
	\begin{center} (a) \end{center}
	
	\begin{tabular}{ccccccccccccccc}
		\noalign{\vspace{3mm}}
		&& \multicolumn{6}{c}{$n=50$} && \multicolumn{6}{c}{$n=100$}\\
		\noalign{\vspace{2mm}}
		$\kappa$ && $T^B$ &  $T^C$ &  $T^F$ & $T^D$ & $T^E$ &  $T^{\DE}$ & & $T^B$ &  $T^C$ &  $T^F$ & $T^D$ & $T^E$ &  $T^{\DE}$\\
		\noalign{\vspace{1mm}}
		$-\nf{1}{2}$   &&  0.94 & 0.63 & 0.90 & 0.69 & 0.69 & 0.79 && 1.00 & 0.96 & 1.00 & 0.97 & 0.97 & 0.99 \\
		$-\nf{1}{4}$   &&  0.29 & 0.09 & 0.21 & 0.14 & 0.14 & 0.16 && 0.55 & 0.15 & 0.43 & 0.27 & 0.27 & 0.32 \\ 
		$-\nf{1}{10}$     &&  0.08 & 0.05 & 0.07 & 0.06 & 0.06 & 0.06 && 0.11 & 0.06 & 0.09 & 0.07 & 0.07 & 0.08 \\
		$\nf{1}{10}$      &&  0.08 & 0.05 & 0.07 & 0.06 & 0.06 & 0.07 && 0.10 & 0.06 & 0.08 & 0.07 & 0.07 & 0.07 \\
		$\nf{1}{4}$       &&  0.19 & 0.07 & 0.14 & 0.10 & 0.10 & 0.12 && 0.35 & 0.10 & 0.27 & 0.17 & 0.18 & 0.21 \\
		$\nf{1}{2}$       &&  0.51 & 0.15 & 0.39 & 0.25 & 0.25 & 0.30 && 0.83 & 0.30 & 0.72 & 0.51 & 0.51 & 0.59 \\
		$\nf{3}{4}$       &&  0.78 & 0.30 & 0.67 & 0.46 & 0.46 & 0.54 && 0.98 & 0.61 & 0.94 & 0.81 & 0.81 & 0.87 \\
		$\nf{9}{10}$      &&  0.88 & 0.42 & 0.79 & 0.58 & 0.58 & 0.67 && 1.00 & 0.77 & 0.99 & 0.91 & 0.91 & 0.95
	\end{tabular}
	
	\vspace{2mm}
	\begin{center} (b) \end{center}

	\begin{tabular}{ccccccccccccccc}
		\noalign{\vspace{3mm}}
		&& \multicolumn{6}{c}{$n=50$} && \multicolumn{6}{c}{$n=100$}\\
		\noalign{\vspace{2mm}}
		$\rho$ && $T^B$ &  $T^C$ &  $T^F$ & $T^D$ & $T^E$ &  $T^{\DE}$ & & $T^B$ &  $T^C$ &  $T^F$ & $T^D$ & $T^E$ &  $T^{\DE}$\\
		\noalign{\vspace{1mm}}
		$\nf{1}{10}$      && 0.09 & 0.06 & 0.07 & 0.06 & 0.06 & 0.07  &&  0.14 & 0.05 & 0.10 & 0.07 & 0.08 & 0.08 \\
		$\nf{1}{4}$       && 0.35 & 0.09 & 0.25 & 0.14 & 0.15 & 0.18  &&  0.61 & 0.14 & 0.46 & 0.28 & 0.28 & 0.33 \\
		$\nf{1}{2}$       && 0.92 & 0.38 & 0.81 & 0.56 & 0.56 & 0.64  &&  1.00 & 0.73 & 0.99 & 0.90 & 0.90 & 0.94
	\end{tabular}
	
	\vspace{2mm}
	\begin{center} (c) \end{center}

	\begin{tabular}{ccccccccccccccc}
		\noalign{\vspace{3mm}}
		&& \multicolumn{6}{c}{$n=50$} && \multicolumn{6}{c}{$n=100$}\\
		\noalign{\vspace{2mm}}
		$\theta$ && $T^B$ &  $T^C$ &  $T^F$ & $T^D$ & $T^E$ &  $T^{\DE}$ & & $T^B$ &  $T^C$ &  $T^F$ & $T^D$ & $T^E$ &  $T^{\DE}$\\
		\noalign{\vspace{1mm}}
		
		$-\nf{1}{2}$   && 0.05 & 0.79 & 0.38 & 0.25 & 0.24 & 0.33 && 0.11 & 0.99 & 0.87 & 0.71 & 0.71 & 0.81 \\
		$-\nf{1}{4}$   && 0.04 & 0.21 & 0.09 & 0.08 & 0.08 & 0.09 && 0.05 & 0.44 & 0.18 & 0.13 & 0.13 & 0.16 \\
		$-\nf{1}{10}$  && 0.05 & 0.07 & 0.06 & 0.05 & 0.06 & 0.06 && 0.05 & 0.10 & 0.06 & 0.06 & 0.06 & 0.06 \\
		$\nf{1}{10}$  && 0.05 & 0.07 & 0.05 & 0.05 & 0.05 & 0.05  && 0.06 & 0.11 & 0.07 & 0.06 & 0.06 & 0.07 \\
		$\nf{1}{4}$   && 0.07 & 0.21 & 0.10 & 0.08 & 0.08 & 0.09  && 0.08 & 0.45 & 0.17 & 0.12 & 0.12 & 0.15\\
		$\nf{1}{2}$   && 0.12 & 0.80 & 0.38 & 0.25 & 0.25 & 0.34  && 0.17 & 0.99 & 0.86 & 0.71 & 0.71 & 0.82           
	\end{tabular}
	
	\vspace{2mm}
	\begin{center} (d) \end{center}
}
\end{table}

\bibliographystyle{alpha } 
{}

\end{document}